\documentclass[a4paper, 11 pt]{article}
\usepackage{a4wide, amsmath, amssymb, mathtools, yfonts}
\usepackage{comment}
\usepackage{tikz}
\usepackage{tikz-cd}
\usepackage[all]{xy}
\usepackage[utf8]{inputenc}
\usepackage{amsthm, mathrsfs}
\usepackage[english]{babel}
\usepackage{hyperref}
\usepackage{authblk}
\usepackage[OT2,T1]{fontenc}
\DeclareSymbolFont{cyrletters}{OT2}{wncyr}{m}{n}
\DeclareMathSymbol{\Sha}{\mathalpha}{cyrletters}{"58}

\numberwithin{equation}{section}
\newtheorem{lemma}{Lemma}[section]
\newtheorem{theorem}[lemma]{Theorem}

\newtheorem{corollary}[lemma]{Corollary}

\theoremstyle{definition}
\newtheorem{mydef}[lemma]{Definition}

\newcommand{\Z}{\mathbb{Z}}
\newcommand{\Q}{\mathbb{Q}}

\newcommand{\R}{\mathbb{R}}
\newcommand{\FF}{\mathbb{F}}

\newcommand{\Frob}{\textup{Frob}}

\newcommand\Gal{\mathrm{Gal}}

\newcommand{\res}{\textup{res}}
\newcommand{\inv}{\textup{inv}}

\title{\vspace{-\baselineskip}\sffamily\bfseries Elliptic curves of rank one over number fields}
\author[1]{Peter Koymans\thanks{Mathematisch Instituut, Universiteit Utrecht, Postbus 80.010, 3508 TA Utrecht, The Netherlands, p.h.koymans@uu.nl}}
\author[2, 3]{Carlo Pagano\thanks{Department of Mathematics and Statistics, Montreal, Quebec H3G 1M8, Canada and  Max Planck Institute for Mathematics, Bonn, Germany, Vivatsgasse 7, 53111, carlein90@gmail.com}}
\affil[1]{Utrecht University}
\affil[2]{Concordia University}
\affil[3]{Max Planck Institute for Mathematics}
\date{\today}

\begin{document}
\maketitle

\begin{abstract}
We prove that for every number field $K$, there exist infinitely many elliptic curves $E$ over $K$ with rank exactly equal to $1$.
\end{abstract}

\section{Introduction}
\subsection{Ranks of elliptic curves}
One of the simplest invariants of an elliptic curve $E$ over a number field $K$ is the rank of the Mordell--Weil group $E(K)$. Nevertheless, the set of possible values of this quantity is a source of great mystery. Even for $K := \Q$, it is unknown whether one can find curves with arbitrarily large rank or, alternatively, what the largest possible rank should be. In this latter direction, Elkies' longstanding record has been recently improved by Elkies--Klagsbrun. 

Likewise, as one starts varying the number field $K$, it is generally not known if a given number can occur as the rank of an elliptic curve. The current state of the art is that $0$ occurs as a rank over any number field by work of Mazur--Rubin \cite{MR}, but no result of this type is known for any fixed integer greater than $0$. It is a well-known folklore conjecture that every number field has an elliptic curve of rank exactly equal to $1$. We first present a corollary of our main result, which settles this conjecture. 

\begin{corollary}
\label{cRank1}
Let $K$ be a number field. Then there exist infinitely many elliptic curves $E$ over $K$ with rank $1$.
\end{corollary}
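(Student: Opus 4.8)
Since the excerpt presents this as a corollary of the paper's main theorem, the real content lies in the construction behind that theorem; here I sketch the strategy I would follow. Fix the number field $K$. The goal is an infinite family of elliptic curves $E/K$ for which both a \emph{lower} and an \emph{upper} bound on $\mathrm{rk}\, E(K)$ can be pinned at $1$. I would take a one-parameter family $E_t/K$ and arrange two things: (i) $\mathrm{rk}\, E_t(K) \ge 1$ for the relevant $t$, and (ii) $\dim_{\FF_2}\Sel^2(E_t/K)$ equals exactly $1$ for infinitely many $t$. Since $\mathrm{rk}\, E_t(K) \le \dim_{\FF_2}\Sel^2(E_t/K) - \dim_{\FF_2}E_t(K)[2]$, these together force $\mathrm{rk}\, E_t(K) = 1$ (and make $\Sha(E_t/K)[2^\infty]$ finite).

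For (i) the clean options are: build a section $P_t \in E_t(K)$ of infinite order into the family, so that $\mathrm{rk}\, E_t(K) \ge 1$ automatically off a thin set of $t$; or force the global root number $w(E_t/K) = -1$ for infinitely many $t$ — a local computation, place by place — and invoke the $2$-parity theorem to deduce that $\mathrm{corank}_2\Sel^{2^\infty}(E_t/K)$ is odd, hence $\ge 1$. For (ii) the engine is the twisting machinery of Mazur--Rubin \cite{MR}: by prescribing the behaviour of $t$ at a finite set of auxiliary primes $\mfq$ of $K$ — chosen via the Chebotarev density theorem with a specified Frobenius in a suitable $2$-extension — one adjusts the local conditions cutting out $\Sel^2$ one at a time, driving its $\FF_2$-dimension down to the minimum compatible with (i). In \cite{MR} this is used to reach $2$-Selmer rank $0$ over any $K$ (hence rank $0$); our task is the harder one of reaching $2$-Selmer rank exactly $1$ while retaining a genuine point, i.e.\ a rank that is at least $1$ for an independent reason. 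Finally one checks that the curves produced have, say, unbounded conductor, so that infinitely many of them are pairwise non-isomorphic over $K$.

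The main obstacle is executing (ii) over an \emph{arbitrary} number field while preserving (i). The global-to-local maps defining $\Sel^2$ are obstructed by $\mathrm{Cl}(K)$ and $\mathcal{O}_K^\times$ (and by the analogous invariants of the quadratic or $2$-Kummer extensions attached to $E$), so the auxiliary primes $\mfq$ must be chosen to meet these obstructions trivially; and they must \emph{simultaneously} be compatible with whichever version of (i) is in force — typically they should split completely in an appropriate number field, so as neither to flip the parity nor to destroy the marked point. Achieving this joint control of parity and of $2$-Selmer rank, uniformly in $K$, is where I expect essentially all the work to be.
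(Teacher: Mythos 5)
Your high-level plan, \textbf{(i)} build a rational point into a family and \textbf{(ii)} twist to drive $2$-Selmer down, is the right shape (the paper uses both), but it misses the ingredient that makes the two goals compatible, and this is exactly where the paper's novelty lies. If you build a section into the family, the twist parameter $t$ is forced to be a \emph{polynomial value} (here essentially $t = \rho^{-2} d\prod_i (c - a_i d)$, so that $(c/d,\rho/d^2)\in E^t(K)$); its prime factorisation is then dictated, and you can no longer choose auxiliary primes $\mathfrak{q}$ with prescribed Frobenius via Chebotarev the way Mazur--Rubin or Klagsbrun--Mazur--Rubin do. The paper resolves this tension with additive combinatorics: it chooses the model so that $t$ factors into four affine linear forms $L_1,\dots,L_4$ in two variables, and then invokes Kai's Green--Tao-type theorem over number fields to produce pairs $(x,y)$ where all four $L_i(x,y)$ are \emph{simultaneously prime} and satisfy prescribed congruence conditions. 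That is the bridge between (i) and (ii), and it is entirely absent from your sketch. Moreover, because the linear-form primes all look alike modulo the pre-twist, the paper also needs a pre-twist $\kappa$ and a careful use of places of split multiplicative reduction (playing the role of the real places in \cite{KP}) to set up the Selmer structure so that four primes can carry it down to the right dimension; this is the content of Theorem \ref{thm: auxiliary implies suitable} and Theorem \ref{tFinal}.

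Two further points. First, the root-number/$2$-parity route in your (i) only gives a lower bound on $\dim_{\FF_2}\Sel^2$, not on the Mordell--Weil rank, so by itself it cannot rule out rank $0$ with nontrivial $\Sha$; a genuine rational point is required, which funnels you back into the constrained-$t$ problem above. Second, with $E(K)[2]\cong\FF_2^2$ (as the descent machinery here requires) the relevant target is $\dim_{\FF_2}\Sel^2(E^t/K)=3$, since $E^t(K)[2]$ already contributes $2$; aiming for $\dim\Sel^2=1$ is incompatible with the torsion you need for the $2$-descent to be tractable.
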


Corollary \ref{cRank1} was proven independently by Zywina \cite[Theorem 1.1]{Zywina2} extending his earlier techniques \cite{Zywina}. Zywina proceeds via isogeny descent on a non-isotrivial family. His main result \cite[Theorem 1.2]{Zywina} has the added benefit of providing a rank growth statement, while our main result Theorem \ref{tMain} has the benefit of applying to almost all elliptic curves with full rational $2$-torsion. Although both works proceed via additive combinatorics and descent, we emphasize that the inner mechanics of the respective works are rather distinct.

While our unconditional knowledge of large ranks is rather limited, it is widely believed that a typical elliptic curve should have low rank. This belief was made rigorous in the striking conjectures of Park--Poonen--Voight--Wood \cite{PPVW}. In particular, their work predicts that elliptic curves over a given number field should have uniformly bounded ranks. 

Their work also gives theoretical evidence for the so-called \emph{minimalist philosophy}, which postulates that in natural families of elliptic curves one should see most of the time the smallest possible value of the rank allowed by parity. A precise instance of the minimalist philosophy can be found in Silverman's conjecture \cite{SilConj} asserting that $100\%$ of the fibers in an elliptic fibration $\mathcal{E}$ on $\mathbb{P}^1$ should have rank equal to either the generic rank $\mathrm{rk} \, \mathcal{E}$ or $\mathrm{rk} \, \mathcal{E} + 1$. The alternative value $\mathrm{rk} \, \mathcal{E} + 1$ is there to address the possibility that the root number of a specialization has opposite sign than $(-1)^{\mathrm{rk} \, \mathcal{E}}$. Indeed, since the rank is at least the value of the generic rank for almost all fibers, BSD implies that the rank is then at least $\mathrm{rk} \, \mathcal{E} + 1$: hence the minimalist philosophy here asserts that the rank will, most of the time, settle for the smallest possible value that meets the adjusted parity. 

For the special case of the fibration over $\mathbb{P}^1 - \{0\}$, given by 
$$
ty^2 = f(x),
$$
for a cubic non-singular polynomial $f(x)$, one can explicitly describe the proportion of $t$ yielding positive and negative root number, and this depends on both $f$ and $K$. In this way one arrives at a generalized version of Goldfeld's conjecture, giving the precise probability for rank $0$ and rank $1$. If instead of the rank, one considers the cohomological avatar provided by the $2^{\infty}$-Selmer rank, then these conjectures have been established in groundbreaking work of Smith \cite{Smi22a, Smi22b} for general number fields under mild technical assumptions. Smith recently removed these technical assumptions if the base field is $\Q$, see \cite{Smi25}.

However, the $2^{\infty}$-Selmer rank being merely an upper bound to the rank leaves us with only partial results towards Goldfeld's conjecture. Namely, Goldfeld's prediction is correct within the positive root number portion of the curves: $100\%$ of them have rank equal to $0$. For negative root number, the situation is as follows: one knows that $100\%$ of them have rank at most $1$, and equal $1$ conditional on BSD (weaker, but still very deep, hypotheses such as parity would also suffice). However, without assuming BSD, it is presently unclear how to show that there is even a single elliptic curve of rank $1$ in the quadratic twist family above. Our main theorem proves that this is the case for a \emph{generic} elliptic curve with rational $2$-torsion. We introduce this notion now.

\begin{mydef}
We say that an elliptic curve $E/K$ is $n$-generic in case $E/K$ has a model of the form 
$$
E: y^2 = (x - a_1) (x - a_2) (x - a_3)
$$
with $a_1, a_2, a_3 \in O_K$ such that for all $\{i, j, k\} = \{1, 2, 3\}$ there exist at least $n$ principal primes $p$ such that:
\begin{itemize}
\item $p$ is totally positive, satisfies $p \equiv 1 \bmod 8O_K$ and satisfies $|O_K/p| > 5$,
\item $p$ divides $a_i - a_j$ with odd multiplicity, while $a_k - a_i$ (and therefore necessarily $a_k-a_j$) is an invertible square modulo $p$.
\end{itemize}
\end{mydef}

\noindent The terminology generic is justified as it is readily shown that, for each fixed $n \in \Z_{\geq 0}$, almost all $(a_1, a_2, a_3) \in O_K^3$ are $n$-generic when ordered by Weil height.

\begin{theorem}
\label{tMain}
Let $K$ be a number field and let $E$ be an elliptic curve over $K$ such that $E(K)[2] \cong \mathbb{F}_2^2$. Assume that $E$ is $3$-generic. 

Then there exist infinitely many $t \in K^\ast/K^{\ast 2}$ such that $\mathrm{rk} \, E^t(K) = 1$.
\end{theorem}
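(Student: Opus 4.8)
The plan is to find infinitely many $t \in K^\ast/K^{\ast 2}$ for which $E^t$ simultaneously (a) has a $K$-rational point of infinite order, and (b) satisfies $\dim_{\mathbb{F}_2}\Sel_2(E^t/K) \le 3$. This suffices: $E^t$ has the same $2$-division field as $E$, so $E^t(K)[2] \cong \mathbb{F}_2^2$, and the descent exact sequence $0 \to E^t(K)/2E^t(K) \to \Sel_2(E^t/K) \to \Sha(E^t/K)[2] \to 0$ gives $\dim_{\mathbb{F}_2}\Sel_2(E^t/K) = \mathrm{rk}\,E^t(K) + 2 + \dim_{\mathbb{F}_2}\Sha(E^t/K)[2]$. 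By (a) the left-hand side is at least $3$, by (b) it is at most $3$, hence it equals $3$, forcing $\mathrm{rk}\,E^t(K) = 1$ and $\Sha(E^t/K)[2] = 0$. To know the point in (a) is genuinely of infinite order one must also arrange $E^t(K)_{\mathrm{tors}} = E^t(K)[2]$; this is secured by an auxiliary splitting condition on $t$ at a well-chosen prime of good reduction for $E$, folded into the construction below.

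\textbf{Two-descent in the twist family.} I would work with the model $E^t : t y^2 = (x-a_1)(x-a_2)(x-a_3)$ and its classical $2$-descent, under which $\Sel_2(E^t/K)$ is a space of triples $(\delta_1,\delta_2,\delta_3)$ in $(K^\ast/K^{\ast 2})^3$ with $\delta_1\delta_2\delta_3 \equiv t$, supported on $S := \{\,v \mid 2t\infty\,\} \cup \{\text{bad primes of } E\}$ and subject to local solubility of the attached torsors at each $v \in S$. Choose $t = u\,\mathfrak{q}_1 \cdots \mathfrak{q}_r$ with $u$ a fixed unit and the $\mathfrak{q}_i$ principal primes obeying the congruences from the definition of $n$-genericity (totally positive, $\equiv 1 \bmod 8 O_K$, residue field of size $>5$), taken so that some of the $n \ge 3$ distinguished primes dividing the differences $a_i - a_j$ appear among them. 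With these congruences the local conditions at $\infty$, at the primes over $2$, and at the bad primes of $E$ become constant, while the conditions at the $\mathfrak{q}_i$ are encoded by a bilinear form over $\mathbb{F}_2$ — a Rédei matrix $M$ whose off-diagonal entries are Rédei and Legendre symbols $\Redei{a_i-a_j}{\mathfrak{q}_k}{\mathfrak{q}_\ell}$ built from the chosen primes and the fixed data — so that $\dim_{\mathbb{F}_2}\Sel_2(E^t/K) = 2 + \dim_{\mathbb{F}_2}\ker M$.

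\textbf{The combinatorial engine.} I would then pick the $\mathfrak{q}_i$ from a large box, one prime at a time, using Chebotarev to prescribe the relevant symbols, and combine this with an additive-combinatorics argument (in the spirit of Smith's work and of our earlier treatment of $\Sha$ and of class groups) to achieve two things at once. First, $M$ is made to have corank exactly $1$, so that $\dim_{\mathbb{F}_2}\Sel_2(E^t/K) = 3$ and (b) holds. Second, the unique Selmer class $c$ not lying in the span of the image of $E^t(K)[2]$ is made \emph{globally trivial}: the genus-one $2$-covering it represents — an explicit intersection of two quadrics, whose solubility is itself a Rédei-symbol condition on the $\mathfrak{q}_i$ — is arranged to carry a $K$-point, so that $c$ is the image of a genuine point $P \in E^t(K)$, giving (a). Since the box may be enlarged without bound, this produces infinitely many pairwise inequivalent $t$. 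The Cassels--Tate pairing $\CTP$ enters as the device linking the two halves: it controls exactly which locally soluble coverings acquire a $K$-point, and it certifies $\Sha(E^t/K)[2] = 0$ once $\dim_{\mathbb{F}_2}\Sel_2(E^t/K) = 3$ together with a point have been produced.

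\textbf{Main obstacle.} The crux is the \emph{simultaneous} control in the combinatorial step. Forcing $M$ to have corank $1$ and forcing the residual class $c$ to be represented by a soluble covering impose competing systems of Rédei-symbol constraints on the same tuple $(\mathfrak{q}_1,\dots,\mathfrak{q}_r)$, and one must show that a positive proportion of admissible tuples satisfies all of them; this equidistribution-plus-additive-combinatorics analysis, hand in hand with the linear algebra describing how each newly chosen prime perturbs $\ker M$ and moves $c$, is the technical heart of the argument. It must moreover be carried out uniformly over all number fields $K$ — which is precisely the role of the hypotheses ``totally positive, $\equiv 1 \bmod 8 O_K$, residue field $>5$'', neutralising the archimedean, dyadic, and small-residue-field contributions so that only the prescribed Rédei symbols survive. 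The remaining tasks — tracking the bad primes of $E$, and verifying the torsion and infinite-order assertions noted above — are comparatively routine.
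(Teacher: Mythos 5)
Your proposal and the paper share the same high-level frame (force $\dim_{\mathbb{F}_2}\mathrm{Sel}^2(E^t/K)=3$ and also produce a nontorsion point, then the inequality $2+\mathrm{rk}\,E^t(K)\le\dim_{\mathbb{F}_2}\mathrm{Sel}^2(E^t/K)$ finishes), and your description of encoding the local conditions in a R\'edei-type matrix and walking the Selmer rank down via Chebotarev choices is close to the paper's Markov-chain machinery (their Lemmas on $\mathcal{L}_{i,\boldsymbol{\pi}}$). But there is a genuine gap at the heart of your argument: the mechanism you propose for step (a), producing the rational point.

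You assert that the ``residual'' Selmer class $c$ can be made globally trivial — i.e., represented by a $2$-covering with a $K$-point — by imposing further R\'edei-symbol conditions on the $\mathfrak{q}_i$, and that the Cassels--Tate pairing ``controls exactly which locally soluble coverings acquire a $K$-point.'' Neither claim is correct. The Cassels--Tate pairing on $\mathrm{Sel}^2$ detects divisibility-by-$2$ in $\Sha$ (whether a class survives to $\mathrm{Sel}^4$), not whether the covering has a $K$-point; and whether a locally soluble torsor lies in the image of $E^t(K)\to\mathrm{Sel}^2(E^t/K)$, as opposed to mapping to a nontrivial element of $\Sha(E^t/K)[2]$, is precisely the kind of global solubility question that is \emph{not} governed by R\'edei symbols or any purely local/pairing data. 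Without an independent source of rational points your argument only bounds the rank by $1$ (plus a $\Sha[2]$ ambiguity); it cannot force rank $\ge 1$. This is exactly the obstruction the paper is designed to circumvent, and it is why the additive-combinatorics input must be of Green--Tao/Kai type rather than a Smith-style equidistribution of Selmer data: the twist is chosen as $t=\kappa\,q_1q_2q_3q_4$ where the $q_i$ are \emph{simultaneous prime values} $q_i=L_i(x,y)$ of four explicit affine-linear forms, engineered so that $t=\rho^{-2}\,d\,(c-a_1d)(c-a_2d)(c-a_3d)$ for $c,d$ built from $(x,y)$, and this identity \emph{directly exhibits} the point $(c/d,\rho/d^2)\in E^t(K)$. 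The point comes from the algebraic shape of the linear forms, not from solving a covering. Relatedly, your sketch omits the pre-twisting step (the auxiliary $\kappa$ of Definition~\ref{def: auxiliary triple} and Theorem~\ref{tFinal}): since the four primes $q_1,\dots,q_4$ can change the Selmer rank by only a bounded amount, one must first steer the Selmer structure to a favorable $5$-dimensional configuration with prescribed valuations at chosen split-multiplicative places $w_1,\dots,w_5$ before the linear-form primes enter; a ``pick the $\mathfrak{q}_i$ from a large box, one prime at a time'' scheme does not by itself reconcile the rank-lowering demands with the need for $t$ to factor as a product of linear forms. The torsion remark is fine (the paper uses the same Lemma~\ref{lTorsion}), and the parity input (root number $-1$ via Lemma~\ref{lRootNumber}, arranged by Lemma~\ref{lGenericwithoddrank}) is implicit but compatible with what you write. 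The fatal issue is the rational-point mechanism.
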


We regard it as a very interesting challenge to optimize Theorem \ref{tMain} and prove it for all elliptic curves with full rational $2$-torsion for which the complete distribution of $2^{\infty}$-Selmer is known (for the state of the art, see \cite{Smi22b}). 

\subsection{Method of proof}
It is not hard to construct elliptic curves with positive rank. In fact, one can explicitly write down plenty of families where the generic rank is positive. It is also not too difficult, for a given elliptic curve, to provide upper bounds on the rank using descent. Until recently, however, it was unknown how to systematically combine the two approaches and \emph{simultaneously} perform descent on a family with generic rank arranged to be positive. The main difficulty comes down to the fact that descent involves the prime factorization of the discriminant: it is typically difficult to control prime factors of polynomials.

A recent insight of \cite{KP} is to bring additive combinatorics into the field, specifically a result of Kai \cite{Kai} (which is based on the earlier works \cite{GT0, GT1, GT2, GT3} over the integers). An immediate application of \cite{KP} is a resolution of Hilbert's tenth problem for finitely generated infinite $\Z$-algebras. The key technical result in \cite{KP} is that whenever $K$ has at least $32$ real places, there exists an elliptic curve $E/K$ such that 
$$
\mathrm{rk} \, E(K(i)) = \mathrm{rk} \, E(K) > 0.
$$
By well-known techniques (see \cite{Poonen, Shl}), such a result resolves Hilbert's tenth problem by reducing to the case of $\Z$ which was famously settled by Matiyasevich \cite{Ma} in 1970. 

In its broad strokes, the strategy of combining descent with additive combinatorics has also been applied in various other settings. Alp\"{o}ge--Bhargava--Ho--Shnidman \cite{ABHS} gave an elegant alternative proof of the undecidability of Hilbert's tenth problem using a rather different abelian variety and set of twists than \cite{KP}. Zywina \cite{Zywina} also used the same high-level strategy to show that there are infinitely many elliptic curves of rank $2$ over $\Q$. A nice feature of \cite{Zywina} is that the family of elliptic curves considered is not isotrivial, which requires a different input from both the additive combinatorics part and the descent part (using $2$-isogeny descent in place of $2$-descent).

One limiting feature of the work \cite{KP} is that the method leveraged in an important way on the presence of the $32$ real embeddings of $K$, so its scope does not cover Corollary \ref{cRank1} or Theorem \ref{tMain}. It is the purpose of the present paper to provide a version of the method in \cite{KP} that works for all number fields without relying on the presence of auxiliary real places. The main insight of the present work, which leads to this broader agility and applicability, is that the places of split multiplicative reduction for the original curve play the role of a \emph{non-archimedean analogue} of the infinite places used in \cite{KP}. 

The difficulty arises from combining \emph{pre-twisting} with additive combinatorics. The purpose of pre-twisting comes from the fact that the size of Selmer groups changes by a bounded amount for each prime involved. Hence it is not always possible to bring the Selmer rank down to a prescribed level merely using the primes provided by additive combinatorics: it is pre-twisting which arranges the Selmer structure favorably before the prime values of polynomials enter the scene.

However, the additive combinatorics primes turn out to have all the same congruence modulo the pre-twisting primes. Meanwhile, the final Selmer structure is dictated by the behavior of the pre-twisting primes modulo the additive combinatorics primes. For this reason, we would like to have enough degrees of freedom in swapping quadratic symbols. In \cite{KP}, this freedom comes from delicate manipulations with real places, while this is provided by analogous manipulations with places of split multiplicative reduction in the present work. 

\subsection{Applications}
The existence of curves as in Corollary \ref{cRank1} plays an important role in the field of definability and decidability over arithmetically interesting rings. A number of authors have proved results conditionally on the truth of Corollary \ref{cRank1}, particularly in the context of attacking Hilbert's tenth problem over so-called \emph{big rings}. These are subrings of $\overline{\mathbb{Q}}$ that are beyond the finitely generated case, and have infinitely many primes appearing in denominators of its elements. 

In \cite[Theorem 1.8]{Eis-Ev-Shl}, Eisentr\"{a}ger--Everest--Shlapentokh proved, among other things, that if Corollary \ref{cRank1} holds, then for any decomposition 
$$
\delta_1 + \ldots + \delta_t = 1
$$
where $t>1$ is an integer, and $\delta_1, \ldots, \delta_t$ are computable positive real numbers, one can find a partition $S_1, \ldots, S_t$ of the finite places of $K$, where each $S_i$ has natural density equal to $\delta_i$, and such that $\Z$ admits a diophantine model over $O_K[S_i^{-1}]$. In particular, Hilbert's tenth problem has a negative answer for each of the rings $O_K[S_i^{-1}]$. They also proved (see \cite[Theorem 1.7]{Eis-Ev-Shl}) that, conditionally on the truth of Corollary \ref{cRank1}, one can find a partition $P_1, \ldots, P_t$ of the finite places of $K$, still with each $P_i$ of density $\delta_i$, such that in each $O_K[P_i^{-1}]$ one has a diophantine subset that is discrete under all absolute values of $K$. Their work built on earlier work of Poonen--Shlapentokh \cite{Poonen--Shl}, which was in turn inspired by previous work of Poonen \cite{Poonen: definability}. In a similar spirit, earlier work of Cornelissen--Shlapentokh \cite{Cornelissen--Shl} established that Corollary \ref{cRank1} implies that for any number field $K$ one can find a sequence of sets of primes $S_n$ with density converging to $1$, as $n$ goes to infinity, with the property that $\Z$ can be defined over $O_K[S_n^{-1}]$ using at most two universal quantifiers. 


\subsection{Layout of the paper}
In Section \ref{sct: background}, we provide some background facts on elliptic curves. Section \ref{sct: add comb} is the heart of the paper and gives the proof of Theorem \ref{tMain}. The proof of Theorem \ref{thm: auxiliary implies suitable} is the most critical step in Section \ref{sct: add comb}. This reduces Theorem \ref{tMain} to showing that there exists an auxiliary twist, which is then proven in Theorem \ref{tFinal}. In the process of proving Theorem \ref{tFinal}, we give a simplified treatment with respect to \cite{KP}, in particular we bypass a laborious parametrization of $H^1(G_K,\mathbb{F}_2)$. Finally, in Section \ref{sct: proof of cor} we deduce Corollary \ref{cRank1} from Theorem \ref{tMain}. 

\subsection*{Acknowledgements}
The authors would like to thank Stephanie Chan, Gunther Cornelissen, Andrew Granville, Sun Woo Park, Ren\'e Schoof and Alexandra Shlapentokh for valuable discussions. We would also like to thank Adam Morgan for explaining to us how to control the 2-Selmer group with the Markov chain from Subsection \ref{sMarkov} during our shared time at the Max Planck Institute for Mathematics in 2019-2020. The first author gratefully acknowledges the support of the Dutch Research Council (NWO) through the Veni grant ``New methods in arithmetic statistics''. The second author wishes to thank the Max Planck Institute for Mathematics in Bonn for its financial support, great work conditions and an inspiring atmosphere.

\section{Background theory} \label{sct: background}
\subsection{Torsion points}
In our next section, we will use additive combinatorics to construct a rational point on certain quadratic twists of a fixed elliptic curve $E$ satisfying the assumptions of Theorem \ref{tMain}. In order to ensure that this point is not torsion, we will use the following lemma.

\begin{lemma}
\label{lTorsion}
Let $K$ be a number field and let $E/K$ be an elliptic curve. Then we have for all but finitely many $d \in K^\ast/K^{\ast 2}$
$$
E^d(K)^{\textup{tors}} = E^d(K)[2].
$$
\end{lemma}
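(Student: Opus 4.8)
The plan is to exploit the fact that a quadratic twist $E^d$ only acquires new torsion from two sources: the $2$-torsion, which is insensitive to twisting, and odd-order torsion, which a twist can pick up only from points already defined over the quadratic extension $K(\sqrt d)$. So the first step is to recall the standard description of $E^d(K)$ inside $E(K(\sqrt d))$: writing $\chi_d$ for the quadratic character cutting out $K(\sqrt d)$, there is an isomorphism $E^d(K) \cong \{P \in E(K(\sqrt d)) : \sigma(P) = -P\}$ where $\sigma$ generates $\Gal(K(\sqrt d)/K)$, the identification being compatible with the $2$-torsion on both sides. Consequently, on the prime-to-$2$ part, $E^d(K)[m] \hookrightarrow E(K(\sqrt d))[m]$ for every odd $m$.

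The second step is to bound the odd torsion of $E$ over quadratic extensions uniformly. By a theorem of Merel (uniform boundedness), there is a constant $B = B(K)$ — indeed depending only on $[K:\Q]$ — such that $\#E'(L)^{\textup{tors}} \le B$ for every elliptic curve $E'$ over every quadratic extension $L/K$, since such $L$ has degree $2[K:\Q]$ over $\Q$. In particular the prime-to-$2$ torsion $T$ of $E$ over any such $L$ has order dividing $B! =: N$, so $E^d(K)^{\textup{tors}}$ differs from $E^d(K)[2]$ only possibly by a subgroup of $E[N]$. (One could also phrase this without Merel by noting that $E^d(K)^{\textup{tors}}_{\mathrm{odd}}$ injects into $E^d(K)^{\textup{tors}} \otimes \Z_\ell$ at each odd $\ell$ and invoking Manin's theorem that $E^d(K)[\ell^\infty]$ is bounded at each fixed $\ell$; but Merel makes the bound uniform in $d$, which is what we need.)

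The third step handles, for each fixed odd prime power $\ell^k \le N$, the finitely many twists $d$ for which $E^d(K)$ can contain a point of exact order $\ell^k$. If $P \in E^d(K)$ has order $\ell^k$, the corresponding point $Q \in E(K(\sqrt d))$ has order $\ell^k$ and satisfies $\sigma(Q) = -Q$; in particular $Q \notin E(K)$ (as $-Q \ne Q$ for $\ell$ odd), so $K(\sqrt d) \subseteq K(Q) \subseteq K(E[\ell^k])$. Since $K(E[\ell^k])/K$ is a fixed finite extension, it contains only finitely many quadratic subextensions, hence there are only finitely many classes $d \in K^\ast/K^{\ast 2}$ with $E^d(K)[\ell^k] \ne 0$. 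Taking the (finite) union over the finitely many odd prime powers $\ell^k \le N$ gives a finite set $S \subset K^\ast/K^{\ast 2}$ outside of which $E^d(K)^{\textup{tors}}$ has no odd part, i.e. equals $E^d(K)[2]$.

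The only mild subtlety — and the step to state carefully rather than a genuine obstacle — is making the bound on the odd torsion \emph{independent of $d$} so that the union over $(\ell,k)$ is finite; this is exactly what Merel's uniform boundedness (applied over the fixed-degree field $K(\sqrt d)$) provides, and with it in hand the argument is immediate. I expect no real difficulty beyond bookkeeping.
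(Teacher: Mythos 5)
There is a genuine (though easily repaired) gap: you rule out odd-order torsion but never handle $2$-power torsion of order $\ge 4$, and the final inference ``$E^d(K)^{\textup{tors}}$ has no odd part, i.e.\ equals $E^d(K)[2]$'' is not valid as stated. Your opening claim that ``the $2$-torsion is insensitive to twisting'' is true only in the sense that $E^d(K)[2]\cong E(K)[2]$; the full $2$-primary part $E^d(K)[2^\infty]$ certainly can grow under twisting (a twist may acquire a point of order $4$ from a point $Q\in E(K(\sqrt d))$ of order $4$ with $\sigma(Q)=-Q$). So ruling out odd torsion does not by itself give the conclusion.

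The good news is that your step 3 applies verbatim to any prime power $\ell^k\ge 3$: if $Q$ has order $\ell^k\ge 3$ and $\sigma(Q)=-Q$, then $-Q\ne Q$, so $Q\notin E(K)$ and hence $K(\sqrt d)=K(Q)\subseteq K(E[\ell^k])$, which has only finitely many quadratic subextensions. So you should simply take the union also over $\ell^k\in\{4,8,\dots\}$ with $\ell^k\le N$ (Merel's bound over quadratic extensions of $K$ works just as well for the $2$-part). With that one-line amendment the argument is correct. For comparison, the paper delegates the proof to \cite[Lemma~3.2]{KP}; your reliance on Merel's uniform boundedness is a heavier input than strictly necessary (one can instead bound $E^d(K)^{\textup{tors}}$ by reducing modulo two fixed places of good reduction away from $2$, since for all but finitely many $d$ the twist has good reduction there and the reductions range over a bounded set), but it is perfectly valid.
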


\begin{proof}
See \cite[Lemma 3.2]{KP}.
\end{proof}

\subsection{Root numbers}
There are various ways in the literature to control the parity of $2$-Selmer ranks. This can be done directly, but since the $2$-parity conjecture is known in our situation, we have opted to do this via root numbers. The global root number $w(E/K)$ is by definition
$$
w(E/K) = \prod_v w(E/K_v),
$$
where $w(E/K_v) \in \{-1, +1\}$ is the local root number. We shall fortunately not need the precise definition of $w(E/K_v)$: the following facts will be sufficient for our work.

\begin{lemma}
\label{lRootFacts}
Let $\mathcal{K}$ be a non-archimedean local field of characteristic $0$. Write $k$ for the residue field of $\mathcal{K}$ and $v$ for the associated valuation. Then:
\begin{itemize}
\item[(i)] If $E/\mathcal{K}$ has good reduction, then we have $w(E/\mathcal{K}) = +1$.
\item[(ii)] If $E/\mathcal{K}$ has additive, potentially good reduction and $\mathrm{char}(k) \geq 5$, then we have
$$
w(E/\mathcal{K}) = (-1)^{\lfloor \frac{v(\Delta) |k|}{12} \rfloor}.
$$
\item[(iii)] If $E/\mathcal{K}$ has split multiplicative reduction, then $w(E/\mathcal{K}) = -1$.
\item[(iv)] If $E/\mathcal{K}$ has non-split multiplicative reduction, then $w(E/\mathcal{K}) = +1$.
\end{itemize}
\end{lemma}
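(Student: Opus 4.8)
The plan is to read off all four statements from the structure of the Weil--Deligne representation $\sigma'$ attached to the $\ell$-adic Tate module of $E$ (for any prime $\ell \neq \mathrm{char}(k)$) together with the standard behaviour of the local $\varepsilon$-factor. Parts (i), (iii), (iv) are then immediate citations (Rohrlich, Tate), and only part (ii) calls for a short elementary computation on top of a classical formula.

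For part (i), good reduction means (by N\'eron--Ogg--Shafarevich) that $\sigma'$ is unramified, and the local root number of an unramified self-dual representation is $+1$. For (iii) and (iv), multiplicative reduction means $\sigma'$ is the special (Steinberg) representation, $\sigma' \cong \chi \otimes \mathrm{sp}(2)$ with $\chi$ unramified: $\chi$ is trivial in the split case and is the unramified quadratic character $\eta$ in the non-split case. Since $w(\mathrm{sp}(2)) = -1$, and twisting $\mathrm{sp}(2)$ (which has conductor exponent $1$) by an unramified character $\chi$ multiplies the $\varepsilon$-factor by $\chi(\varpi)$, with $\eta(\varpi) = -1$, we obtain $w(E/\mathcal{K}) = -1$ in the split case and $w(E/\mathcal{K}) = +1$ in the non-split case.

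Part (ii) is the one that requires an argument. With $p := \mathrm{char}(k) \geq 5$ and $E$ of additive, potentially good reduction, inertia acts on the Tate module through a cyclic quotient of order $e = 12/\gcd(12, v(\Delta)) \in \{2,3,4,6\}$, where $\Delta$ is the \emph{minimal} discriminant --- equivalently, $E$ attains good reduction over the tamely and totally ramified degree-$e$ extension $L/\mathcal{K}$. (Minimality is essential: $\lfloor v(\Delta)|k|/12 \rfloor \bmod 2$ is not invariant under $\Delta \mapsto u^{12}\Delta$, so the displayed formula only makes sense for a minimal model; this should be flagged.) I would then invoke Rohrlich's explicit root-number formula in the potentially-good, residue characteristic $\geq 5$ case, which evaluates $w(E/\mathcal{K})$ as a quadratic residue symbol depending only on $e$ and $|k|$: $\left(\frac{-1}{|k|}\right)$ for $e \in \{2,6\}$, $\left(\frac{-3}{|k|}\right)$ for $e = 3$, and $\left(\frac{-2}{|k|}\right)$ for $e = 4$. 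It then remains to check, in each of these three cases, that this equals $(-1)^{\lfloor v(\Delta)|k|/12 \rfloor}$; this is a finite verification using $|k| \bmod 12 \in \{1,5,7,11\}$ (recall $|k|$ is odd and prime to $3$) together with the admissible values $v(\Delta) \bmod 12 \in \{2,3,4,6,8,9,10\}$, and the observation that $v(\Delta)$ and $12 - v(\Delta)$ yield the same sign (pairing the Kodaira types $II \leftrightarrow II^\ast$, etc.). If one prefers not to quote the explicit formula, one can instead use inductivity of $\varepsilon$-factors: $w(E/L) = +1$ by part (i), and induction from $L$ relates $w(E/\mathcal{K})$ together with its twists by characters of $\mathrm{Gal}(L/\mathcal{K})$ to the Langlands constant $\lambda(L/\mathcal{K})$, whose relevant power is a normalized Gauss sum over $k$ evaluating to the Legendre symbols above.

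I expect the main obstacle to be entirely bookkeeping rather than conceptual: locating the correct classical root-number formula for potentially good reduction in residue characteristic $\geq 5$ and matching it, case by case, to the compact expression $(-1)^{\lfloor v(\Delta)|k|/12 \rfloor}$, while being careful that $\Delta$ denotes the minimal discriminant. Everything else is a direct appeal to standard references.
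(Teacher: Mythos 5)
Your proposal is correct, but it takes a genuinely different route from the paper. The paper's proof of this lemma is a single citation to \cite[Theorem~2.3]{CD} (Cowland Kellock--Dokchitser), which is a modern compilation of local root number formulas; you instead unpack the underlying classical argument via Weil--Deligne representations. Parts (i), (iii), (iv) follow, as you say, from N\'eron--Ogg--Shafarevich, the identification of the local representation in the multiplicative case with $\chi \otimes \mathrm{sp}(2)$ for an unramified $\chi$, the fact that $w(\mathrm{sp}(2)) = -1$, and the twisting formula $\varepsilon(\chi \otimes \sigma) = \chi(\varpi)^{a(\sigma)}\varepsilon(\sigma)$ with $a(\mathrm{sp}(2)) = 1$. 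For (ii), invoking Rohrlich's formula (root number $\left(\tfrac{-1}{|k|}\right)$ for $e \in \{2,6\}$, $\left(\tfrac{-3}{|k|}\right)$ for $e=3$, $\left(\tfrac{-2}{|k|}\right)$ for $e=4$) and then matching it to $(-1)^{\lfloor v(\Delta)|k|/12 \rfloor}$ over $|k| \bmod 12 \in \{1,5,7,11\}$ and $v(\Delta) \in \{2,3,4,6,8,9,10\}$ is indeed a finite, correct check; the pairing $v(\Delta) \leftrightarrow 12 - v(\Delta)$ works because $\lfloor v(\Delta)|k|/12\rfloor + \lfloor (12-v(\Delta))|k|/12 \rfloor = |k| - 1$ is even. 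The trade-off: the paper's citation is shorter and sidesteps normalization conventions, while your route is self-contained and, usefully, surfaces the implicit requirement that $\Delta$ be the \emph{minimal} discriminant --- a genuine subtlety, since replacing $\Delta$ by $u^{12}\Delta$ changes $\lfloor v(\Delta)|k|/12 \rfloor$ by $|k|$, which is odd, flipping the sign. That caveat is left unstated in the paper's formulation of the lemma, so flagging it is a real improvement rather than pedantry.
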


\begin{proof}
This is a special case of \cite[Theorem 2.3]{CD}.
\end{proof}

Our next result is the relevant (known) case of the $2$-parity conjecture that we will use.

\begin{lemma}
\label{lRootNumber}
Let $K$ be a number field and let $E/K$ be an elliptic curve. Assume that $E(K)[2] \cong \mathbb{F}_2^2$. Then we have
$$
(-1)^{\dim_{\mathbb{F}_2} \mathrm{Sel}^2(E/K)} = w(E/K).
$$
\end{lemma}

\begin{proof}
See \cite[Corollary 3.6]{KP}.
\end{proof}

\subsection{Selmer ranks as a Markov chain}
\label{sMarkov}
We will now encode the $2$-Selmer rank of $E^d$ as a Markov chain. As the necessary theory was already set up in \cite[Section 4]{KP} (which is in turn based on the earlier works \cite{PR} and \cite{KMR, KMR2}), we will be rather brief about this. 

Throughout this subsection, $K$ denotes a fixed number field and $E$ denotes a fixed elliptic curve with $E(K)[2] \cong \mathbb{F}_2^2$. We write $\Omega_K$ for the set of all places of $K$, and we fix a finite subset $T \subseteq \Omega_K$ containing all infinite places, all $2$-adic places and all places of bad reduction of $E$.

\begin{mydef}
Let $M$ be a finite $G_K$-module. A Selmer structure $\mathcal{L} = (\mathcal{L}_v)_{v \in \Omega_K}$ is a sequence of subgroups $\mathcal{L}_v \subseteq H^1(G_{K_v}, M)$ such that $\mathcal{L}_v = H^1_{\textup{nr}}(G_{K_v}, M)$ for all but finitely many $v$. Given a Selmer structure $\mathcal{L}$, the corresponding Selmer group is by definition
$$
\mathrm{Sel}_{\mathcal{L}}(G_K, M) := \ker\left(H^1(G_K, M) \rightarrow \prod_{v \in \Omega_K} \frac{H^1(G_{K_v}, M)}{\mathcal{L}_v}\right).
$$
\end{mydef}

We are now ready to define a sequence of Selmer structures, which we roughly think of as a Markov chain on the set of places of bad reduction of $E^d$. This sequence has two pleasant features: it is easy to describe the change of dimension in each step, and there is a simple relation between the $2$-Selmer rank of the elliptic curve and the (second to) last Selmer group of the sequence.

We let
$$
E: y^2 = (x - a_1) (x - a_2) (x - a_3)
$$
be a model of $E$ with $a_1, a_2, a_3 \in K$ distinct. We define $\alpha := a_1 - a_2$, $\beta := a_1 - a_3$ and $\gamma := a_2 - a_3$. Write $(-,-)_{\text{Weil}}$ for the Weil pairing and write $P_1 = (a_1, 0)$ and $P_2 = (a_2, 0)$. Then we get two linear functionals $\lambda_1, \lambda_2: E[2] \rightarrow \mathbb{F}_2$ given by
$$
\lambda_i: P \mapsto (P, P_i)_{\text{Weil}}.
$$
The pair of functionals $(\lambda_1, \lambda_2): E[2] \rightarrow \mathbb{F}_2^2$ is an isomorphism. Henceforth we shall frequently identify $E[2]$ with $\mathbb{F}_2^2$, and the implicit identification will always be $(\lambda_1, \lambda_2)$.

Let $i \in \Z_{\geq 0}$ be an integer and let $v_1, \dots, v_i$ be distinct places outside of $T$. For each integer $1 \leq j \leq i$, we let $\pi_j$ denote a choice of an element of $K_{v_j}^\ast/K_{v_j}^{\ast 2}$ with odd valuation: observe that there are exactly two such choices. We write $\boldsymbol{\pi} = (\pi_j)_{1 \leq j \leq i}$ for the corresponding vector, and we define the Selmer structure $\mathcal{L}_{i, \boldsymbol{\pi}} = (\mathcal{L}_{i, \boldsymbol{\pi}, v})_{v \in \Omega_K}$ through
$$
\mathcal{L}_{i, \boldsymbol{\pi}, v} = 
\begin{cases}
\delta(E(K_v)) &\text{if } v \in T, \\
\langle (\alpha \beta, \pi_j \alpha), (-\pi_j \alpha, - \alpha \gamma) \rangle &\text{if } v = v_j \text{ for some } j \in \{1, \dots, i\}, \\
H^1_{\text{nr}}(G_{K_v}, E[2]) &\text{otherwise.}
\end{cases}
$$
Here we have written $\delta$ for the connecting map induced by the Kummer sequence
$$
0 \rightarrow E[2] \rightarrow E \xrightarrow{\cdot 2} E \rightarrow 0.
$$
The Selmer group $\mathrm{Sel}_{\mathcal{L}_{i, \boldsymbol{\pi}}}(G_K, E[2])$ is neatly connected to the $2$-Selmer group of $E^d$, as our next result codifies. We write $\psi_d \in H^1(G_K, \mathbb{F}_2)$ for the quadratic character attached to $d \in K^\ast/K^{\ast 2}$ via Kummer theory.

\begin{lemma}
\label{lFinalStep}
Let $K$, $E$ and $T$ be as above. Let $d \in K^\ast/K^{\ast 2}$ and assume that
\begin{equation}
\label{eLocalT}
\mathrm{res}_v(\psi_d) = 0 \quad \quad \textup{ for all } v \in T.
\end{equation}
Let $r \geq 1$ be an integer and let $v_1, \dots, v_r$ be the places ramified in $\psi_d$. Let $\pi_1, \dots, \pi_{r - 1}$ be such that $\mathrm{res}_{v_i}(\psi_d) = \psi_{\pi_i}$ for all $i$. Then we have
$$
\dim_{\FF_2} \mathrm{Sel}^2(E^d/K) = 2 + \dim_{\FF_2} \mathrm{Sel}_{\mathcal{L}_{r - 1, \boldsymbol{\pi}}}(G_K, E[2]).
$$
\end{lemma}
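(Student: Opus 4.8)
The plan is to relate the 2-Selmer group of $E^d$ directly to a Selmer group for the module $E[2]$ by identifying, place by place, the local conditions cut out by $\delta(E^d(K_v))$ with those appearing in the Selmer structure $\mathcal{L}_{r-1,\boldsymbol{\pi}}$. First I would recall the standard fact (from \cite[Section 4]{KP}, ultimately \cite{PR, KMR}) that $\mathrm{Sel}^2(E^d/K)$ can be computed as $\mathrm{Sel}_{\mathcal{L}^d}(G_K, E^d[2])$ where $\mathcal{L}^d_v = \delta_d(E^d(K_v))$, and then transport this along the canonical $G_K$-isomorphism $E^d[2] \cong E[2]$ coming from the fact that a quadratic twist does not change the 2-torsion as a Galois module (both are $\mathbb{F}_2^2$ with the same $(\lambda_1,\lambda_2)$-description, since $E$ has full rational 2-torsion). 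Under this identification, $\delta_d(E^d(K_v))$ becomes a subgroup of $H^1(G_{K_v}, E[2])$ that I can compute explicitly in terms of $\alpha, \beta, \gamma$ and the local class $\mathrm{res}_v(\psi_d)$; the twisting by $d$ shifts the image of the connecting map by the cocycle associated to $\psi_d$, and the explicit formulas for the image of $\delta$ on an elliptic curve with full 2-torsion (in terms of the $x$-coordinates of 2-torsion points) give exactly the symbols $(\alpha\beta, \ast)$, $(-\ast, -\alpha\gamma)$ one sees in the definition of $\mathcal{L}_{i,\boldsymbol{\pi},v}$.

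Next I would split the analysis of the local conditions into three cases according to the place $v$. For $v \in T$: by hypothesis \eqref{eLocalT} we have $\mathrm{res}_v(\psi_d) = 0$, so $E^d$ is locally isomorphic to $E$ over $K_v$, and hence $\delta_d(E^d(K_v))$ maps to $\delta(E(K_v))$ under the identification — this matches the first line of $\mathcal{L}_{i,\boldsymbol{\pi},v}$. For $v = v_i$ one of the first $r-1$ ramified places: here $\mathrm{res}_{v_i}(\psi_d) = \psi_{\pi_i}$ is ramified, so $E^d$ has (at worst) additive reduction obtained by a ramified quadratic twist, and the explicit computation of $\delta_d(E^d(K_v))$ gives precisely the two-dimensional subgroup $\langle(\alpha\beta,\pi_i\alpha),(-\pi_i\alpha,-\alpha\gamma)\rangle$ — this is the content one has to verify by a direct local calculation, e.g.\ using that over an unramified $K_v$ the twisted curve's 2-descent map is governed by Hilbert symbols with $\pi_i$. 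For all remaining places $v \notin T$ other than $v_1,\dots,v_{r-1}$: either $\psi_d$ is unramified at $v$ (then $E$ has good reduction there and $\delta_d(E^d(K_v)) = \delta(E(K_v)) = H^1_{\mathrm{nr}}$, matching the third line), or $v = v_r$, the single ``leftover'' ramified place. This last place is the crux: the claim ``$\dim \mathrm{Sel}^2(E^d/K) = 2 + \dim \mathrm{Sel}_{\mathcal{L}_{r-1,\boldsymbol{\pi}}}$'' must account for the discrepancy between imposing the twisted-descent condition at $v_r$ versus imposing the unramified condition there, and the $+2$ should emerge from this. I would handle it via a Greenberg--Wiles style Euler characteristic / Poitou--Tate comparison: changing the local condition at the single place $v_r$ from $\delta_d(E^d(K_{v_r}))$ to $H^1_{\mathrm{nr}}(G_{K_{v_r}}, E[2])$ changes the Selmer dimension in a controlled way, and combined with the global Tate pairing and the fact that $E[2] \cong \mathbb{F}_2^2$ has trivial $G_K$-action on $\mu_2$-coefficients, one reads off the constant $2$. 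Alternatively, and probably cleaner, I would invoke the comparison already proved in \cite[Section 4]{KP} for the analogous chain of Selmer structures and check that the present normalization of $\pi_i$ and the leftover place $v_r$ reproduces their formula; the role of $v_r$ is that its local condition is forced by the global reciprocity $\sum_v \mathrm{res}_v(\psi_d) = 0$, so dropping it loses no information but shifts the count by the rank of $E(K)[2] = \mathbb{F}_2^2$.

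The main obstacle I anticipate is the bookkeeping at the leftover place $v_r$ and making the constant $2$ precise: one must be careful that the $r-1$ chosen $\pi_i$'s, together with the constraint from global reciprocity, pin down the local behaviour at $v_r$ without imposing an independent extra condition, and that the standard Selmer-structure comparison (self-duality of the local conditions under the Tate/Weil pairing, so that Poitou--Tate applies) is in force for $E[2]$ here. Once that is set up, the $+2$ is exactly $\dim_{\mathbb{F}_2} E(K)[2]$, reflecting that $\mathrm{Sel}^2$ always contains the image of the Mordell--Weil 2-torsion while the auxiliary Selmer group $\mathrm{Sel}_{\mathcal{L}_{r-1,\boldsymbol{\pi}}}$ has been normalized to quotient this out. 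Everything else — the explicit local symbol computations in the three cases above, and the identification $E^d[2] \cong E[2]$ — is routine given the machinery already recorded in \cite{KP} and the facts in Section \ref{sct: background}.
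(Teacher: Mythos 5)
Your high-level strategy is the right one: identify $E^d[2]\cong E[2]$ as $G_K$-modules, match the twisted local conditions $\delta_d(E^d(K_v))$ against $\mathcal{L}_{r-1,\boldsymbol{\pi},v}$ place by place (at $v\in T$ they agree because $\psi_d$ is locally trivial; at $v_j$, $j<r$, they agree because $\delta_d(E^d(K_{v_j}))$ coincides with the restriction of the global $2$-torsion image $\delta_d(E^d(K)[2])=\langle(\alpha\beta,d\alpha),(-d\alpha,-\alpha\gamma)\rangle$), and isolate the discrepancy at the leftover ramified place $v_r$. That much matches the structure behind the cited reference, and you also correctly flag that citing \cite[Lemma 4.2]{KP} is the shortest route, which is exactly what the paper does.

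The genuine gap is the $+2$ at $v_r$, and the tool you reach for does not work. All the local conditions appearing here ($\delta(E(K_v))$, $\delta_d(E^d(K_v))$, $H^1_{\mathrm{nr}}$ at odd good-reduction places, and the explicit subgroups at $v_j$) are maximal isotropic for the local Tate pairing, so both Selmer structures under comparison are self-dual; the Greenberg--Wiles Euler-characteristic formula then reads $\dim\mathrm{Sel}_{\mathcal{L}}-\dim\mathrm{Sel}_{\mathcal{L}^\vee}=0$, which is vacuous and cannot ``read off'' a $2$. Your heuristic that the $2$ is $\dim_{\FF_2}E(K)[2]$ is pointing the right way, but to make it a proof one needs (i) the observation that $\res_{v_r}$ restricted to $\delta_d(E^d(K)[2])\subseteq\mathrm{Sel}^2(E^d/K)$ is injective precisely because $\psi_d$ is ramified at $v_r$ (so every nontrivial $2$-torsion class is ramified at $v_r$ and in particular lies outside $\mathrm{Sel}_{\mathcal{L}_{r-1,\boldsymbol{\pi}}}$), which gives the inequality $\dim\mathrm{Sel}^2(E^d/K)\le 2+\dim\mathrm{Sel}_{\mathcal{L}_{r-1,\boldsymbol{\pi}}}$, and (ii) a converse surjectivity/lower-bound argument, which amounts to showing one is in the ``$n_i=+2$'' case of Lemma~\ref{lSelmerChange} when passing from $\mathcal{L}_{r-1,\boldsymbol{\pi}}$ to $\mathcal{L}_{r,\boldsymbol{\pi}}$ with $\pi_r$ chosen from $\psi_d$. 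Step (ii) is the part your sketch does not address at all, and it is not a formality: it requires the Poitou--Tate/Cassels--Tate input packaged in Lemma~\ref{lSelmerChange} together with the global reciprocity constraint on $\psi_d$. As written, your argument establishes at best an inequality, not the equality in the statement.
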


\begin{proof}
This is \cite[Lemma 4.2]{KP}.
\end{proof}

From now on, we shall frequently abuse notation by using $\mathrm{Sel}_{\mathcal{L}_{i, \boldsymbol{\pi}}}(G_K, E[2])$ also when $\boldsymbol{\pi}$ is a vector of length greater than $i$ (with the implicit meaning that one should use only the first $i$ entries of $\boldsymbol{\pi}$). In the remainder of this subsection, we will explain, given another place $v_{i + 1}$ outside $T \cup \{v_1, \dots, v_i\}$ and an element $\pi_{i + 1} \in K_{v_{i + 1}}^\ast/K_{v_{i + 1}}^{\ast 2}$, how to compute the change of Selmer rank
$$
n_i := \dim_{\FF_2} \mathrm{Sel}_{\mathcal{L}_{i + 1, \boldsymbol{\pi}}}(G_K, E[2]) - \dim_{\FF_2} \mathrm{Sel}_{\mathcal{L}_{i, \boldsymbol{\pi}}}(G_K, E[2]).
$$
To this end, we introduce a second Selmer structure $\mathcal{L}_{i, \boldsymbol{\pi}}' = (\mathcal{L}_{i, \boldsymbol{\pi}, v}')_{v \in \Omega_K}$ given by
$$
\mathcal{L}_{i, \boldsymbol{\pi}, v}' = 
\begin{cases}
\delta(E(K_v)) &\text{if } v \in T, \\
\langle (\alpha \beta, \pi_j \alpha), (-\pi_j \alpha, - \alpha \gamma) \rangle &\text{if } v = v_j \text{ for some } j \in \{1, \dots, i\}, \\
H^1(G_{K_v}, E[2]) &\text{if } v = v_{i + 1} \\
H^1_{\text{nr}}(G_{K_v}, E[2]) &\text{otherwise}
\end{cases}
$$
and
$$
A_{v_{i + 1}} = \mathrm{res}_{v_{i + 1}} \mathrm{Sel}_{\mathcal{L}_{i, \boldsymbol{\pi}}'}(G_K, E[2]).
$$

\begin{lemma}
\label{lSelmerChange}
Let $K$, $E$ and $T$ be as above. Let $i \in \Z_{\geq 0}$ and let $v_1, \dots, v_{i + 1}$ be distinct places outside of $T$. Then we have
$$
n_i = 
\begin{cases}
2 &\textup{if } \dim_{\FF_2} \res_{v_{i + 1}}(\mathrm{Sel}_{\mathcal{L}_{i, \boldsymbol{\pi}}}(G_K, E[2])) = 0 \textup{ and } A_{v_{i + 1}} = \mathcal{L}_{i + 1, \boldsymbol{\pi}, v_{i + 1}} \\
-2 &\textup{if } \dim_{\FF_2} \res_{v_{i + 1}}(\mathrm{Sel}_{\mathcal{L}_{i, \boldsymbol{\pi}}}(G_K, E[2])) = 2, \\
0 &\textup{otherwise.}
\end{cases}
$$
\end{lemma}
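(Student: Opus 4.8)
Write $q := v_{i+1}$, $N := \mathcal{L}_{i,\boldsymbol\pi,q} = H^1_{\textup{nr}}(G_{K_q}, E[2])$, $Y := \mathcal{L}_{i+1,\boldsymbol\pi,q}$, and $H := H^1(G_{K_q}, E[2])$; since $E[2]$ is a trivial Galois module and $q$ is a good place of odd residue characteristic, $\dim_{\FF_2} H = 4$. The plan is to reduce $n_i$ to the geometry of $A_{v_{i+1}}, N, Y \subseteq H$ and then to feed it into the quadratic-form formalism of \cite{PR, KP}. For the reduction: the structures $\mathcal{L}_{i,\boldsymbol\pi}$ and $\mathcal{L}_{i+1,\boldsymbol\pi}$ agree away from $q$, and both sit between the strict structure $\mathcal{L}^{\textup{str}}$ (condition $0$ at $q$, the rest unchanged) and the relaxed structure $\mathcal{L}'_{i,\boldsymbol\pi}$. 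Restriction at $q$ therefore yields exact sequences $0 \to \mathrm{Sel}_{\mathcal{L}^{\textup{str}}} \to \mathrm{Sel}_{\mathcal{L}_{i,\boldsymbol\pi}} \xrightarrow{\res_q} A_{v_{i+1}} \cap N \to 0$, and likewise with $Y$ in place of $N$, with $\mathrm{Sel}_{\mathcal{L}^{\textup{str}}}$ common to both. Hence $\dim_{\FF_2}\res_q(\mathrm{Sel}_{\mathcal{L}_{i,\boldsymbol\pi}}) = \dim_{\FF_2}(A_{v_{i+1}} \cap N)$ and
$$
n_i = \dim_{\FF_2}(A_{v_{i+1}} \cap Y) - \dim_{\FF_2}(A_{v_{i+1}} \cap N).
$$

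Next I would record the relevant local facts. Both $N$ and $Y$ are maximal isotropic for the local Tate pairing on $H$ — for $N$ this is local duality at a good prime, and for $Y$ it follows from $\langle x, -x\rangle_q = 0$ together with $q \nmid 2\alpha\beta\gamma$ — and $N \cap Y = 0$, because a vector of $Y$ has both coordinates $q$-adic units only when its two coefficients vanish, as $\mathrm{val}_q(\pi_{i+1})$ is odd. Moreover $A_{v_{i+1}}$ is maximal isotropic: it is $2$-dimensional by a Greenberg--Wiles count, and for $c, c' \in \mathrm{Sel}_{\mathcal{L}'_{i,\boldsymbol\pi}}$ the reciprocity relation $\sum_v \langle \res_v c, \res_v c'\rangle_v = 0$ has all terms with $v \neq q$ vanishing, since the local conditions of $\mathcal{L}'_{i,\boldsymbol\pi}$ away from $q$ are isotropic.

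The crux is to upgrade these to the canonical quadratic form $\mathfrak{q}_q$ on $H$ refining the Tate pairing that is furnished by \cite{PR} and set up in \cite[Section 4]{KP}. One has $\mathfrak{q}_q|_N = 0$, so $(H, \mathfrak{q}_q)$ is split; the global identity $\bigoplus_v \mathfrak{q}_v \equiv 0$ on the image of $\mathrm{Sel}_{\mathcal{L}'_{i,\boldsymbol\pi}}$, together with $\mathfrak{q}_v$ vanishing on the local conditions away from $q$, forces $\mathfrak{q}_q|_{A_{v_{i+1}}} = 0$; and $\mathfrak{q}_q|_Y = 0$ by a direct computation on the two generators of $Y$, once more using $q \nmid 2\alpha\beta\gamma$ and $\mathrm{val}_q(\pi_{i+1})$ odd. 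The hard part here is the last verification — that the prescribed $Y$ is maximal isotropic for $\mathfrak{q}_q$, not merely for its associated bilinear form; everything else in this step is bookkeeping of results from \cite{KP}.

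Granting this, $N$, $Y$ and $A_{v_{i+1}}$ are maximal isotropic subspaces of the split quadratic space $(H, \mathfrak{q}_q)$ over $\FF_2$, and two such subspaces meet in even dimension exactly when they lie in the same of the two rulings. Since $N \cap Y = 0$, the planes $N$ and $Y$ share a ruling. If $A_{v_{i+1}}$ lies in the other ruling, then $\dim_{\FF_2}(A_{v_{i+1}} \cap N) = \dim_{\FF_2}(A_{v_{i+1}} \cap Y) = 1$, so $n_i = 0$. If $A_{v_{i+1}}$ lies in the ruling of $N$ and $Y$, then $\dim_{\FF_2}(A_{v_{i+1}} \cap N)$ and $\dim_{\FF_2}(A_{v_{i+1}} \cap Y)$ lie in $\{0, 2\}$, the value $2$ being attained precisely when $A_{v_{i+1}}$ equals $N$, respectively $Y$; as $N \neq Y$, at most one of these occurs, and inspecting the possibilities gives $n_i = -2$ iff $A_{v_{i+1}} = N$ iff $\dim_{\FF_2}\res_q(\mathrm{Sel}_{\mathcal{L}_{i,\boldsymbol\pi}}) = 2$, $n_i = 2$ iff $A_{v_{i+1}} = Y$ (which forces $\dim_{\FF_2}\res_q(\mathrm{Sel}_{\mathcal{L}_{i,\boldsymbol\pi}}) = 0$), and $n_i = 0$ in the remaining cases. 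This is the stated trichotomy.
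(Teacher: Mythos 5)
Your argument is correct and reconstructs, in essentially the same terms, the Poonen--Rains/Klagsbrun--Mazur--Rubin proof that the paper invokes by citing \cite[Lemma 4.1]{KP}: reduce to $n_i = \dim_{\FF_2}(A_{v_{i+1}} \cap Y) - \dim_{\FF_2}(A_{v_{i+1}} \cap N)$, show that $N$, $Y$ and $A_{v_{i+1}}$ are all Lagrangians for the Poonen--Rains quadratic form $\mathfrak{q}_q$ on the $4$-dimensional split space $H$, and use that Lagrangians in a fixed ruling meet in even dimension. The one step you flag but defer --- isotropy of the twisted local conditions for the quadratic form itself rather than merely its polarization, which is needed not only for $Y$ but also for the conditions at $v_1,\dots,v_i$ that feed into the reciprocity argument showing $\mathfrak{q}_q|_{A_{v_{i+1}}}=0$ --- is exactly the nontrivial input supplied by \cite{KMR,KMR2} and \cite{PR}, where such subspaces are identified with images of the Kummer map for the locally twisted curve; so the acknowledged gap is genuine but is bridged by the same references the paper cites, and the overall route matches.
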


\begin{proof}
See \cite[Lemma 4.1]{KP}, which is based on earlier work of \cite{DD} and further refined by \cite{Ces}.
\end{proof}

Finally, we shall need the following precise description of the local $2$-Selmer spaces for primes of split multiplicative reduction. 

\begin{lemma} 
\label{lSelmeratBad}
Let $K$ be a non-archimedean local field of characteristic $0$ with a valuation $v$ satisfying $|O_K/v| \equiv 1 \bmod 4$. Let $\pi_v$ be a uniformizer and let $\epsilon_v$ be any non-square element of $O_K^\ast$. Suppose that $a_1, a_2, a_3 \in O_K$. Then the following hold:
\begin{enumerate}
\item[$(i)$] If $v(\alpha) \equiv 1 \bmod 2$, while the elements $\beta, \gamma$ are invertible squares modulo $v$, then
$$
\delta(E(K)) = \langle (\pi_v, \pi_v), (\epsilon_v, \epsilon_v) \rangle.
$$
\item[$(ii)$] If $v(\beta) \equiv 1 \bmod 2$, while the elements $\alpha, \gamma$ are invertible squares modulo $v$, then
$$
\delta(E(K)) = \langle (\pi_v, 1), (\epsilon_v, 1) \rangle.
$$
\item[$(iii)$] If $v(\gamma) \equiv 1 \bmod 2$, while the elements $\alpha, \beta$ are invertible squares modulo $v$, then 
$$
\delta(E(K)) = \langle (1, \pi_v),(1, \epsilon_v) \rangle.
$$
\end{enumerate}
\end{lemma}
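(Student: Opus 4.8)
The plan is to compute $\delta(E(K))$ directly from its definition as the image of the Kummer map $E(K)/2E(K) \hookrightarrow H^1(G_K, E[2])$, using the explicit description of this map under the identification $E[2] \cong \mathbb{F}_2^2$ via $(\lambda_1, \lambda_2)$. Recall that for a model $y^2 = (x-a_1)(x-a_2)(x-a_3)$ the classical $2$-descent map sends a point $P = (x,y) \in E(K)$ with $x \neq a_i$ to the class of $(x - a_1, x - a_2, x - a_3) \in (K^\ast/K^{\ast 2})^3$ subject to the product relation, and under our identification this becomes the pair $(x - a_2, x - a_3) \bmod K^{\ast 2}$ (or an equivalent normalization); the two-torsion points $P_1 = (a_1, 0)$, $P_2 = (a_2,0)$ and $P_1 + P_2 = (a_3, 0)$ map to $(\alpha\beta, 1)$-type classes involving the differences $\alpha, \beta, \gamma$, which is exactly why these symbols appear in the statement. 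So the first step is to pin down this formula precisely, including the images of the three nontrivial $2$-torsion points, since those always lie in $\delta(E(K))$ and already contribute a rank-$2$ subgroup — and a dimension count (local Tate duality / the formula $\dim \delta(E(K)) = \dim E(K)[2] + \dim E(K_v)/(\text{something})$, but here simply $|\delta(E(K))| = |E(K)/2E(K)| = |E(K)[2]| \cdot |K^\ast/K^{\ast 2}|^{?}$, which over a local field with residue char $\neq 2$ gives $\dim_{\mathbb{F}_2}\delta(E(K)) = 2$) shows these $2$-torsion images are everything.

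Concretely, the key steps in order: (1) establish that $\dim_{\mathbb{F}_2} \delta(E(K)) = 2$ for our local field $K$ — this follows from $\# E(K)/2E(K) = \# E(K)[2] \cdot \#H^1(G_K, E[2])_{\text{stuff}}$, or more cheaply from the standard formula $\#E(K)/2E(K) = \#E(K)[2] \cdot |2|_v^{-1} = \#E(K)[2] = 4$ valid because $v$ is non-archimedean of residue characteristic $\neq 2$, so $|2|_v = 1$; (2) compute the images of $P_1, P_2, P_3 := P_1 + P_2$ under $\delta$: using the product formula these are the classes of the triples built from $\alpha = a_1 - a_2, \beta = a_1 - a_3, \gamma = a_2 - a_3$, e.g. $\delta(P_1)$ involves $(\ast, \beta, \ast)$ and $\delta(P_2)$ involves $(\alpha, \ast, \gamma)$ up to the product relation, and translate into the $(\lambda_1, \lambda_2)$-coordinates to get pairs like $(\alpha\beta, \alpha)$, $(\alpha, \gamma)$ matching the brackets in each case; (3) in each of the three cases, plug in the hypotheses on valuations — say $v(\alpha)$ odd and $\beta, \gamma$ invertible squares mod $v$ — to see that modulo squares $\alpha$ becomes $\pi_v$ or $\epsilon_v \pi_v$ (odd valuation), while $\beta, \gamma$ become trivial; then the two $2$-torsion images span exactly the claimed $2$-dimensional subgroup, e.g. $\langle(\pi_v,\pi_v),(\epsilon_v,\epsilon_v)\rangle$ in case (i); (4) invoke step (1) to conclude that these two elements, being independent and lying in a $2$-dimensional space, generate all of $\delta(E(K))$.

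The main obstacle is bookkeeping rather than conceptual: getting the explicit $2$-descent formula exactly right in the $(\lambda_1,\lambda_2)$-identification — in particular tracking which difference $\alpha, \beta, \gamma$ (and which sign) lands in which coordinate for each of $P_1, P_2, P_1+P_2$, and checking that the resulting span really is the one written (the asymmetry between the three cases — $(\pi_v,\pi_v)$ vs $(\pi_v,1)$ vs $(1,\pi_v)$ — is a direct reflection of which torsion point has both coordinates hit versus only one). One should also double-check that the condition $|O_K/v| \equiv 1 \bmod 4$ is used where needed: it guarantees $-1 \in O_K^{\ast 2}$, so that signs are invisible modulo squares and $\epsilon_v$ can be taken to represent $-1$ or any fixed non-square interchangeably, which is what lets the $-\pi_v \alpha$ and $-\alpha\gamma$ appearing elsewhere in the Selmer structures be rewritten cleanly. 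A clean alternative for step (2)–(4) is to note $\delta(E(K)) \supseteq \delta(E(K)[2])$ always, and that the latter is computed purely from the Weil-pairing formula for the descent map on $2$-torsion (no actual point-finding needed), then finish by the dimension count; this avoids ever having to exhibit a non-torsion point and is the route I would write up.
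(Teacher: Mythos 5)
Your proposal has a genuine gap at its core, and the ``clean alternative'' you advocate at the end is precisely the step that cannot work. The dimension count $\dim_{\mathbb{F}_2}\delta(E(K)) = 2$ is fine, and the explicit descent formulas $\delta((a_1,0)) = (\alpha\beta,\,\alpha)$, $\delta((a_2,0)) = (-\alpha,\,-\alpha\gamma)$, $\delta((a_3,0)) = (-\beta,\,\gamma)$ in the $(\lambda_1,\lambda_2)$-coordinates are indeed the ones implicit in the paper's Selmer structures. But plug in the hypotheses of case (i): $\beta,\gamma$ are squares, $-1$ is a square (since $|O_K/v|\equiv 1\bmod 4$), and $\alpha$ has odd valuation. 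Then $\delta((a_1,0)) = (\alpha,\alpha)$, $\delta((a_2,0)) = (\alpha,\alpha)$, $\delta((a_3,0)) = (1,1)$ — all three torsion images collapse to the single ramified class $(\pi,\pi)$, so $\delta(E(K)[2])$ is only \emph{one}-dimensional. The same collapse happens in cases (ii) and (iii). The unramified class $(\epsilon_v,\epsilon_v)$ (resp.\ $(\epsilon_v,1)$, $(1,\epsilon_v)$) is not in the image of the $2$-torsion, so ``invoke the dimension count'' does not close the argument, and ``avoiding having to exhibit a non-torsion point'' is exactly the thing you cannot avoid.

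The missing ingredient is to show that $\delta(E(K))$ contains a nontrivial \emph{unramified} class, i.e.\ that there is a point $P \in E(K) \smallsetminus 2E(K)$ with $P \in 2E(L)$ for $L/K$ the unramified quadratic extension. This is where the hypotheses really bite: they force $E$ to have split multiplicative reduction at $v$, and the paper then uses the Tate parametrization $E(\overline K) \cong \overline K^\ast/q^{\mathbb Z}$ (plus $H^1(G_K,\mathbb{Z})=0$) to identify $E(K)\cong K^\ast/q^{\mathbb Z}$ and $E(L)\cong L^\ast/q^{\mathbb Z}$, after which the image of $\epsilon_v$ is the sought-for point. Once the unramified class is produced, the paper also pins down \emph{which} unramified class it must be by maximal isotropy of $\delta(E(K))$ for the local pairing (only $(\epsilon_v,\epsilon_v)$ is unramified and orthogonal to $(\pi,\pi)$), another piece your sketch does not engage. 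So the torsion computation and dimension count you propose cover exactly half the proof; the other half — the split multiplicative reduction / Tate curve argument — is the crux and is absent.
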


\begin{proof}
In general, $\delta(E(K))$ is maximal isotropic for the quadratic form $q_E$ defined by
$$
(x, y) \mapsto (\alpha \beta \cdot x, - \alpha \gamma \cdot y)_v.
$$ 
We now prove $(i)$, the other cases being similar. Denote by $\pi$ the (ramified) class of $\alpha$ in $K^\ast/K^{\ast 2}$. Observe that $(\pi, \pi) \in \delta(E(K))$ since we have $(\pi, \pi) \in \delta(E(K)[2])$. In case $(i)$, $\delta(E(K))$ is then maximal isotropic for the quadratic form $q_E$ defined by
$$
(x, y) \mapsto (\pi \cdot x, \pi \cdot y)_v.
$$ 
The associated bilinear form sends $((x, y), (x', y'))$ to $(x, y')_v (x', y)_v$. With this, we see that the only non-trivial unramified class orthogonal to $(\pi, \pi)$ is the class $(\epsilon_v, \epsilon_v)$. This class vanishes under $q_E$ as well, thus yielding a maximal isotropic space. We conclude that if $\delta(E(K))$ contains a non-trivial unramified class, then it has to be as in the conclusion of the lemma.

Hence it suffices to show that there is a non-trivial unramified class. The existence of such a class is equivalent to the existence of a point $P \in E(K) - 2E(K)$ such that $P \in 2E(L)$, where $L/K$ is the unique unramified quadratic extension of $K$. Therefore it remains to establish the existence of such a point $P$, which we do next. 

We start by observing that $E$ has split multiplicative reduction. Indeed, \cite[p.~186]{Silverman} implies that the equation is minimal, and furthermore we know that $\beta$ and $\gamma$ are squares modulo $v$. Therefore, from Tate's parametrization, there exists $q \in K^\ast$ with positive valuation and a Galois equivariant isomorphism
$$
\varphi: \overline{K}^\ast/q^{\Z} \rightarrow E(\overline{K}).
$$
Endowing $\Z$ with the trivial action of $G_K$, we have $H^1(G_K, \Z) = 0$ because there are no non-trivial continuous homomorphisms from a profinite group to $\Z$. Hence we conclude that
$$
E(K) \cong \left(\overline{K}^\ast/q^{\Z}\right)^{G_K} = K^\ast/q^{\Z}, \quad \quad E(L) \cong \left(\overline{K}^\ast/q^{\Z}\right)^{G_L} = L^\ast/q^{\Z}.
$$
The desired point $P$ is now concretely given by $\varphi(\epsilon_v)$. Indeed, the class of $\epsilon_v$ inside $K^\ast/q^{\Z}$ becomes divisible by $2$ in $L^\ast/q^{\Z}$ but it is definitely not divisible by $2$ in $K^\ast/q^{\Z}$.
\end{proof}

\subsection{Finite fields}
In order to construct our linear forms (to which we will apply Kai's theorem \cite[Theorem A.8]{Kai}), we will use a technical result on finite fields.

\begin{lemma} 
\label{lfinitefields}
Let $\mathbb{F}_q$ be a finite field with $q > 5$ odd, and let $c_1, c_2, c_3, c_4, c_5, c_6 \in \mathbb{F}_q$. Assume that $c_1 c_4 - c_2 c_3 \neq 0$, $c_1 c_6 - c_2 c_5 \neq 0$ and $c_3 c_6 - c_4 c_5 \neq 0$. Then for all choices of $\delta_1, \delta_2, \delta_3 \in \mathbb{F}_q^\ast$ and all $\lambda_1, \lambda_2, \lambda_3 \in \mathbb{F}_q$, there exist $u, v \in \mathbb{F}_q$ and $s_1, s_2, s_3 \in \mathbb{F}_q^\ast$ such that
\begin{align*}
c_1 u + c_2 v &= \delta_1 s_1^2 + \lambda_1 \\
c_3 u + c_4 v &= \delta_2 s_2^2 + \lambda_2 \\
c_5 u + c_6 v &= \delta_3 s_3^2 + \lambda_3.
\end{align*}
\end{lemma}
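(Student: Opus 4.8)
The plan is to eliminate the linear unknowns $u,v$, thereby reducing the statement to a classical fact about diagonal ternary quadratic forms over $\mathbb{F}_q$, and then to establish that fact by an explicit character-sum count. The one genuine subtlety is that the error terms in the count have absolute value of order $q$, not $O(\sqrt{q})$, and it is exactly their size that makes the hypothesis $q>5$ necessary.

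\emph{Step 1: reduction to a ternary form.} Since $c_1c_4-c_2c_3\neq 0$, the first two equations determine $(u,v)$ uniquely in terms of $s_1,s_2$. Writing $(c_5,c_6)=\alpha(c_1,c_2)+\beta(c_3,c_4)$, Cramer's rule gives $\alpha=(c_4c_5-c_3c_6)/(c_1c_4-c_2c_3)$ and $\beta=(c_1c_6-c_2c_5)/(c_1c_4-c_2c_3)$, so the remaining hypotheses $c_3c_6-c_4c_5\neq 0$ and $c_1c_6-c_2c_5\neq 0$ are equivalent to $\alpha,\beta\in\mathbb{F}_q^\ast$. Substituting the first two equations into the third, the system is solvable with $s_1,s_2,s_3\in\mathbb{F}_q^\ast$ if and only if
$$
(\alpha\delta_1)\,s_1^2+(\beta\delta_2)\,s_2^2+(-\delta_3)\,s_3^2=\lambda_3-\alpha\lambda_1-\beta\lambda_2
$$
has such a solution. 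As $\alpha\delta_1,\beta\delta_2,-\delta_3\in\mathbb{F}_q^\ast$, it therefore suffices to prove: \emph{for every $b_1,b_2,b_3\in\mathbb{F}_q^\ast$ and every $c\in\mathbb{F}_q$, the element $c$ is represented by $b_1s_1^2+b_2s_2^2+b_3s_3^2$ with all $s_i$ nonzero.}

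\emph{Step 2: the character-sum count and conclusion.} Let $M$ be the number of such representations and let $\chi$ be the quadratic character of $\mathbb{F}_q$, extended by $\chi(0)=0$. Substituting $t_i=b_is_i^2\in\mathbb{F}_q^\ast$ and using that $\#\{s\in\mathbb{F}_q^\ast:b_is^2=t_i\}=1+\chi(b_it_i)$, one obtains
$$
M=\sum_{\substack{(t_1,t_2,t_3)\in(\mathbb{F}_q^\ast)^3\\ t_1+t_2+t_3=c}}\ \prod_{i=1}^{3}\bigl(1+\chi(b_it_i)\bigr)=\sum_{S\subseteq\{1,2,3\}}\Bigl(\prod_{i\in S}\chi(b_i)\Bigr)\,\Sigma_S ,\qquad \Sigma_S:=\sum_{\substack{(t_1,t_2,t_3)\in(\mathbb{F}_q^\ast)^3\\ t_1+t_2+t_3=c}}\ \prod_{i\in S}\chi(t_i).
$$
The main term $\Sigma_\emptyset$ counts ordered representations of $c$ as a sum of three nonzero elements; an elementary inclusion--exclusion gives $\Sigma_\emptyset=q^2-3q+3$ if $c\neq0$ and $q^2-3q+2$ if $c=0$. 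For each nonempty $S$ I would sum out the $t_i$ one at a time, using repeatedly the standard evaluation $\sum_{t\in\mathbb{F}_q}\chi(at^2+bt+e)=-\chi(a)$ when $a\neq0$ and $b^2-4ae\neq0$, and $(q-1)\chi(a)$ when $a\neq0$ and $b^2-4ae=0$. This yields $|\Sigma_S|\le 1$ for $1\le|S|\le 2$ and $|\Sigma_{\{1,2,3\}}|=q$ in the case $c\neq0$, and $\Sigma_{\{i\}}=0$, $|\Sigma_{\{i,j\}}|=q-1$, $\Sigma_{\{1,2,3\}}=0$ in the case $c=0$; in both cases $\sum_{\emptyset\neq S}|\Sigma_S|\le 3(q-1)$. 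Therefore
$$
M\ \ge\ \Sigma_\emptyset-\sum_{\emptyset\neq S}|\Sigma_S|\ \ge\ (q^2-3q+2)-3(q-1)\ =\ (q-1)(q-5)\ >\ 0,
$$
since $q$ is odd with $q>5$. Hence $M>0$, and unwinding Step 1 produces the required $u,v,s_1,s_2,s_3$.

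\emph{The main obstacle.} The delicate case is $c=0$, which is precisely the configuration in which the three lines $c_{2i-1}u+c_{2i}v=\lambda_i$ ($i=1,2,3$) are concurrent. There the $|S|=2$ error terms are honestly of size $q-1$, so crude bounds are insufficient and one must evaluate them exactly; their magnitude is also what forces $q>5$, since over $\mathbb{F}_5$ one cannot solve $s_1^2+s_2^2+s_3^2=0$ with all $s_i\neq 0$, so the hypothesis is sharp.
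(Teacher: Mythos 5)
Your proof is correct, and your Step 1 is the same elimination the paper performs: solve the first two equations for $(u,v)$ via the nonvanishing of $c_1c_4-c_2c_3$, substitute into the third, and use the other two determinant hypotheses to make the coefficients $\alpha\delta_1$, $\beta\delta_2$ nonzero, reducing to a statement about a diagonal ternary quadratic form over $\mathbb{F}_q$. Where you diverge is in Step 2: the paper simply invokes the Cauchy--Davenport theorem at this point, whereas you prove the ternary-form statement directly by a character-sum count. Your version has a genuine advantage in scope: Cauchy--Davenport in its standard form applies to $\mathbb{Z}/p\mathbb{Z}$, and the paper uses the lemma with $q=|O_K/\lambda_i|$ a general prime power, so the paper's citation is at best loose for non-prime $q$; your Jacobi-sum evaluation works uniformly for every odd prime power $q>5$. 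The calculation checks out — in particular $\Sigma_{\{1,2,3\}}=0$ when $c=0$ (the $t_3$-sum kills it after the inner sum evaluates to the constant $-\chi(-1)$), the bound $\sum_{\emptyset\neq S}|\Sigma_S|\le 3(q-1)$ covers both cases for $q\ge 7$, and the final inequality $M\ge(q-1)(q-5)>0$ is clean. Your remark on the sharpness at $q=5$ (nonzero squares $\{1,4\}$ never sum three at a time to $0$) is a nice sanity check that the hypothesis $q>5$ is doing real work in exactly the $c=0$ configuration.
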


\begin{proof}
For $s_1$ and $s_2$ to be chosen later, we set
\begin{align*}
u_0(s_1, s_2) &:= \frac{c_4 (\delta_1 s_1^2 + \lambda_1) - c_2 (\delta_2 s_2^2 + \lambda_2)}{c_1 c_4 - c_2 c_3} \\
v_0(s_1, s_2) &:= \frac{-c_3 (\delta_1 s_1^2 + \lambda_1) + c_1 (\delta_2 s_2^2 + \lambda_2)}{c_1 c_4 - c_2 c_3}.
\end{align*}
Then $u_0(s_1, s_2)$ and $v_0(s_1, s_2)$ satisfy the first two equations by construction. Hence it suffices to show that for all $\delta_3 \in \mathbb{F}_q^\ast$ and all $\lambda_3 \in \mathbb{F}_q$, there exist $s_1, s_2, s_3 \in \mathbb{F}_q^\ast$ such that
$$
c_5 u_0(s_1, s_2) + c_6 v_0(s_1, s_2) = \delta_3 s_3^2 + \lambda_3.
$$
The above equation is equivalent to
$$
(c_4 c_5 - c_3 c_6) (\delta_1 s_1^2 + \lambda_1) + (c_1 c_6 - c_2 c_5) (\delta_2 s_2^2 + \lambda_2)  = (c_1 c_4 - c_2 c_3) (\delta_3 s_3^2 + \lambda_3).
$$
Renaming variables, we need to show that for every fixed $\epsilon_1, \epsilon_2, \epsilon_3 \in \mathbb{F}_q^\ast$ and fixed $\lambda \in \mathbb{F}_q$, there exist $s_1, s_2, s_3 \in \mathbb{F}_q^\ast$ with
$$
\epsilon_1 s_1^2 + \epsilon_2 s_2^2 = \epsilon_3 s_3^2 + \lambda.
$$
But this follows easily from the Cauchy--Davenport theorem and our assumption $q > 5$.
\end{proof}

\section{Additive combinatorics} 
\label{sct: add comb}
The goal of this section is to prove Theorem \ref{tMain}. We will do this using two reduction steps. These are respectively Theorem \ref{tInfSuitable}, which gives sufficient conditions for rank $1$ (called a suitable twist), and Theorem \ref{thm: auxiliary implies suitable}, which gives sufficient conditions for infinitely many suitable twists provided that we have a certain auxiliary pre-twist. The existence of this pre-twist is established in Theorem \ref{tFinal}.

\subsection{Reduction steps}
Take $E$ to be a $3$-generic elliptic curve as in Theorem \ref{tMain}, so $E$ has the shape
$$
E: y^2 = (x - a_1) (x - a_2) (x - a_3)
$$
with $a_1, a_2, a_3 \in O_K$. 

\begin{lemma}
\label{lGenericwithoddrank}
Let $n \in \Z_{\geq 1}$ and let $E$ be a $n$-generic elliptic curve over $K$. Then there exists $t_0 \in K^{\ast}/K^{\ast 2}$ such that $E^{t_0}$ is $(n - 1)$-generic and has root number $-1$. 
\end{lemma}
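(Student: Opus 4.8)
The plan is to produce $t_0$ as a product of principal primes supplied by the $n$-genericity hypothesis, arranged so that (a) exactly one of the local root number factors flips, making the global root number $-1$, and (b) enough of the genericity data survives to leave $E^{t_0}$ being $(n-1)$-generic. First I would recall that $E$ being $n$-generic means that for every $\{i,j,k\}=\{1,2,3\}$ there are at least $n$ principal primes $p$ that are totally positive, $\equiv 1 \bmod 8O_K$, have $|O_K/p|>5$, divide $a_i-a_j$ with odd multiplicity, and have $a_k-a_i$ an invertible square mod $p$. Fix one such prime $p_0$ for a chosen partition $\{i,j,k\}$, and set $t_0$ to be (the class of) $p_0$ in $K^\ast/K^{\ast 2}$. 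The twist $E^{t_0}$ has a model $y^2=(x-t_0 a_1)(x-t_0 a_2)(x-t_0 a_3)$, so its root differences are $t_0\alpha, t_0\beta, t_0\gamma$; in particular the valuations at every prime change by $v(t_0)$, which is $1$ at $p_0$ and $0$ elsewhere, and squares mod a prime $q\nmid t_0$ are unaffected (since $t_0\equiv$ square mod such $q$ is not automatic, so here I would want $t_0$ chosen with $t_0\equiv 1\bmod 8O_K$ and totally positive — which a single prime $p_0\equiv 1\bmod 8O_K$, totally positive, already gives).

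Next I would compute the change in global root number. Writing $w(E^{t_0}/K)=\prod_v w(E^{t_0}/K_v)$ and comparing termwise with $w(E/K)$, the only places where the local root number can change are those dividing $2t_0\Delta$. At the infinite places and the $2$-adic places nothing changes because $t_0$ is totally positive and $\equiv 1\bmod 8O_K$, so $K_v^{\ast 2}\ni t_0$ there and $E^{t_0}/K_v\cong E/K_v$. At a finite place $v\nmid 2t_0$ of good reduction for $E$, the twist still has good reduction (as $t_0$ is a square in $O_{K_v}^\ast$), so $w=+1$ is unchanged; at the bad places of $E$ away from $2t_0$, again $t_0$ is a local square so the local curve is unchanged. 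The only genuinely new place is $p_0$ itself: by the genericity conditions, at $p_0$ the difference $a_i-a_j$ has odd valuation while $a_k-a_i,a_k-a_j$ are invertible squares, which is exactly the hypothesis of Lemma \ref{lSelmeratBad} (note $|O_K/p_0|\equiv 1\bmod 4$ since $p_0\equiv 1\bmod 8$) — so $E$ (hence $E^{t_0}$, but here we use it for $E^{t_0}$ directly since after twisting by $p_0$ the valuations become even and the relevant geometry is that of split multiplicative reduction) has split multiplicative reduction at $p_0$. The clean way: $E^{t_0}$ has $v_{p_0}(t_0\alpha)$ even and $t_0\beta,t_0\gamma$ still invertible squares mod $p_0$, so by the argument in the proof of Lemma \ref{lSelmeratBad} (Tate parametrization, the equation is minimal by \cite[p.~186]{Silverman}) $E^{t_0}$ has \emph{split} multiplicative reduction at $p_0$, whereas $E$ had good (or at worst the original) reduction there. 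By Lemma \ref{lRootFacts}(i) and (iii), the local factor at $p_0$ changes from $+1$ to $-1$. Hence $w(E^{t_0}/K)=-w(E/K)$, and by replacing $t_0$ by $p_0 p_0'$ (a product of two such primes for the same partition, $n\geq 1$ guarantees $\geq 1$ but for a sign flip we may also need to track $w(E/K)$ — if $w(E/K)=-1$ already, take $t_0=1$; otherwise take $t_0=p_0$; so in all cases we can arrange $w(E^{t_0}/K)=-1$).

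Finally I would check $(n-1)$-genericity of $E^{t_0}$. For each partition $\{i,j,k\}$, the $n$ principal primes $p$ witnessing $n$-genericity of $E$ that are \emph{distinct from} $p_0$ — there are at least $n-1$ of them — still witness the required conditions for $E^{t_0}$: they are totally positive, $\equiv 1\bmod 8O_K$, $|O_K/p|>5$, and since $p\nmid t_0$ we have $v_p(t_0(a_i-a_j))=v_p(a_i-a_j)$ odd, while $t_0(a_k-a_i)$ is an invertible square mod $p$ because both $t_0$ (as $t_0=p_0\equiv 1 \bmod 8$, hence... actually $p_0$ need not be a square mod $p$) — here I would instead note that whether $t_0(a_k-a_i)$ is a square mod $p$ depends on the Legendre symbol $\left(\frac{t_0}{p}\right)$, so to preserve genericity cleanly one should twist by $t_0=p_0$ and then, for each partition, use the $n-1$ remaining primes but only those for which $\left(\frac{p_0}{p}\right)=1$; since this is one congruence condition it might kill too many. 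The robust fix, which I expect to be the main obstacle, is to instead choose $t_0$ to be a product $p_0 q_0$ where $q_0$ is an auxiliary principal prime chosen so that $\left(\frac{t_0}{p}\right)=1$ for all the primes $p$ we wish to keep (possible since $n$-genericity gives us freedom, or by a separate application of a density/Dirichlet argument over $O_K$), while $q_0$ contributes an even valuation change... which defeats the root-number flip. Cleanest resolution: use that being a square modulo $p$ for the finitely many kept primes is governed by finitely many split conditions on $p_0$ in a ray class field, and $n$-genericity of $E$ — being a statement about infinitely many primes in the relevant Chebotarev classes (since "almost all $(a_1,a_2,a_3)$ are $n$-generic", the witnessing primes come from positive-density Chebotarev sets) — actually furnishes $p_0$ inside the sub-density where additionally $\left(\frac{p_0}{p}\right)=1$ for all $n-1$ kept primes per partition. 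I would carry this out by first fixing, for each of the three partitions, a set of $n-1$ "surviving" witness primes, then choosing $p_0$ (a witness prime for a fourth, arbitrary partition, or the $n$-th witness of a fixed partition) subject to the finitely many extra splitting conditions making it a square mod each survivor; the genericity hypothesis, being about a positive-density set of primes, accommodates these finitely many extra conditions. With such $p_0$, $E^{t_0}$ is $(n-1)$-generic and has root number $-1$, completing the proof.
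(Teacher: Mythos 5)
Your high-level strategy --- twist by one prime so that exactly one local root number flips while the genericity witnesses survive --- matches the paper's, but you twist by the wrong kind of prime. The paper takes $t_0=q$ for a \emph{fresh} prime element $q$ not dividing the discriminant, supplied by Mitsui's prime ideal theorem subject to congruence conditions: $q$ totally positive, a square at the places dividing $6$ and at every place of bad reduction of $E$ \emph{except} one designated witnessing prime $v$ of split multiplicative reduction, and a non-square modulo $v$. Then the local factor changes only at $v$ (split becomes non-split, so $-1\mapsto+1$ by Lemma \ref{lRootFacts}) and at the new place $(q)$ (additive, potentially \emph{good} reduction, contributing $+1$ because $|O_K/q|\equiv 1\bmod 4$), and every genericity witness except possibly $v$ survives because $q$ is a square modulo it; hence $E^q$ is $(n-1)$-generic with root number $-w(E/K)$. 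All of these conditions on $q$ can be imposed simultaneously precisely because $q$ ranges over a positive-density set.

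Your choice $t_0=p_0$ with $p_0\mid a_i-a_j$ runs into two gaps that your proposed fix does not close. First, the witnessing primes of a \emph{fixed} curve are finitely many (they divide the fixed nonzero element $a_i-a_j$), so your ``cleanest resolution'' --- choosing $p_0$ inside a positive-density Chebotarev subclass where $\left(\frac{p_0}{p}\right)=1$ for all surviving witnesses $p$ --- is not available; the remark that almost all $(a_1,a_2,a_3)$ are $n$-generic concerns varying the curve, not the density of witnesses for a given one. Without those quadratic-residue conditions you can guarantee neither that $t_0(a_k-a_i)$ stays a square modulo the surviving witnesses (so $(n-1)$-genericity fails) nor that the local root numbers at the \emph{other} bad places are unchanged: there $p_0$ is only a unit, not necessarily a square, and a non-square unit twist turns split multiplicative into non-split, flipping additional local factors in an uncontrolled way. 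Finally, your analysis at $p_0$ itself is off: $E$ already has split multiplicative reduction there (local factor $-1$, not $+1$), and $E^{p_0}$ has additive, potentially \emph{multiplicative} reduction (two of the three twisted root differences acquire odd valuation), a case not covered by Lemma \ref{lRootFacts}; one would need Rohrlich's formula $w=\chi(-1)$ for that case. The sign does flip there, but not for the reason you give, and the approach as a whole is not salvageable without reverting to an auxiliary prime chosen with Chebotarev freedom.
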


\begin{proof}
Clearly, we may assume that $w(E/K) = +1$. Since $E$ is $n$-generic, we may fix a prime $v$ with $v(\alpha) \equiv 1 \bmod 2$ and $v(\beta) = v(\gamma) = 0$. Observe that $v$ must be of split multiplicative reduction for $E$ by \cite[Proposition 5.1, Chapter 7]{Silverman} combined with the fact that $\beta, \gamma$ are invertible squares modulo $v$. 

By the Mitsui prime ideal theorem, there exists a prime element $q \nmid 6$ that is totally positive, an invertible square at all places dividing 6, a square at all places of bad reduction of $E$ except for $v$ and a non-square in $(O_K/v)^\ast$. We claim that $w(E^q/K) = -1$. Since $E^q$ is $(n - 1)$-generic, the claim clearly implies the lemma. So it remains to show that $w(E^q/K) = -1$.

Note that the places of bad reduction for $E^q$ are the disjoint union of $(q)$ and the places of bad reduction of $E$. Moreover, at all places $v'$ of bad reduction of $E$ except for $v$, we have $E \cong E^q$ locally at $v'$. Hence Lemma \ref{lRootFacts}, specifically part $(i)$, $(iii)$ and $(iv)$, shows that
\begin{align}
\label{eRootChange}
w(E^q/K) w(E/K) = w(E^q/K_{(q)}) w(E^q/K_v) w(E/K_v) = -w(E^q/K_{(q)}),
\end{align}
since $E$ has split multiplicative reduction at $v$ and $E^q$ has non-split multiplicative reduction at $v$. 

We now make the subclaim that $w(E^q/K_{(q)}) = +1$, which implies the desired conclusion thanks to equation \eqref{eRootChange} and our assumption $w(E/K) = +1$. So it remains to prove the subclaim. Observe that $E^q$ has additive, potentially good reduction at $q$. Hence Lemma \ref{lRootFacts}$(iv)$ shows that our subclaim is equivalent to $|(O_K/q)^\ast| \equiv 1 \bmod 4$. But this follows from Hilbert reciprocity applied to the $2$-cocycle $\psi_q \cup \psi_{-1}$.
\end{proof}

By Lemma \ref{lGenericwithoddrank}, we may and will henceforth assume that $E$ is $2$-generic and
\begin{align}
\label{eNegRoot}
w(E/K) = -1.
\end{align}
Take $T$ to be any finite set of places including all $2$-adic places, $3$-adic places, all infinite places and all places of bad reduction of $E$. For $t \in K^\ast/K^{\ast 2}$ write $v_1, \dots, v_r$ for the places that are outside of $T$ and ramified in $\psi_t$. We let $\pi_i(t)$ be the unique element of $K_{v_i}^\ast/K_{v_i}^{\ast 2}$ satisfying $\res_{v_i} t = \pi_i(t)$, and we write $\boldsymbol{\pi}(t) = (\pi_i(t))_{1 \leq i \leq r}$. With this notation set, we define the Selmer structure $\mathcal{L}_{i, t}$ to be $\mathcal{L}_{i, \boldsymbol{\pi}(t)}$. 

We will implicitly identify $\frac{1}{2} \Z/\Z \cong \mathbb{F}_2$ throughout this section.

\begin{mydef}
We say that an element $t \in K^\ast/K^{\ast 2}$ is a suitable twist if
\begin{enumerate}
\item[$(P1)$] $\psi_t$ is locally trivial at all places in $T$;
\item[$(P2)$] there exists $\kappa \in K^\ast/K^{\ast 2}$ and prime elements $q_1, q_2, q_3, q_4$ with the following properties
\begin{itemize}
\item we have
$$
\psi_t = \psi_\kappa + \psi_{q_1} + \psi_{q_2} + \psi_{q_3} + \psi_{q_4},
$$
\item denoting by $\{\mathfrak{p}_1, \dots, \mathfrak{p}_s\}$ the ramified primes in $\psi_\kappa$ outside of $T$, the elements $q_1, q_2, q_3, q_4$ are pairwise coprime and coprime to $\mathfrak{p}_1, \dots, \mathfrak{p}_s$ and $T$. We will list the ramified places of $\psi_t$ outside of $T$ as
$$
(\mathfrak{p}_1, \ldots, \mathfrak{p}_s, (q_1), \ldots, (q_4)) = (v_1, \ldots, v_r);
$$
\end{itemize}
\item[$(P3)$] writing $T' := T \cup \{\mathfrak{p}_1, \dots, \mathfrak{p}_s\}$, there exists a basis
$$
(z_1, z_2), \quad (z_3, z_4), \quad (z_5, z_6), \quad (z_7, z_8), \quad (z_9, z_{10})
$$
of $\mathrm{Sel}_{\mathcal{L}_{s, t}}(G_K, E[2])$ such that
\begin{align}
&\sum_{v \in T'} \inv_v(z_i \cup \psi_{q_1}) = 1 \quad \textup{ for } i \in \{1, \dots, 3\} \label{eP1} \\
&\sum_{v \in T'} \inv_v(z_i \cup \psi_{q_1}) = 0 \quad \textup{ for } i \in \{4, \dots, 10\} \label{eP2} \\
&\sum_{v \in T'} \inv_v(z_i \cup \psi_{q_2}) = 1 \quad \textup{ for } i \in \{6, 7\} \label{eP3} \\
&\sum_{v \in T'} \inv_v(z_i \cup \psi_{q_2}) = 0 \quad \textup{ for } i \in \{5, 8, 9, 10\} \label{eP4} \\
&\sum_{v \in T'} \inv_v(z_i \cup \psi_{q_3}) = 1 \quad \textup{ for } i \in \{9, 10\} \label{eP5};
\end{align}
\item[$(P4)$] we have $\mathrm{rk} \, E^t(K) > 0$.
\end{enumerate}
\end{mydef}

The conditions in $(P3)$ have been set up in such a way that, upon applying Lemma \ref{lSelmerChange} and Hilbert reciprocity, the Selmer rank will decrease from $5$ to $1$. The proof of our next result is similar to \cite[Theorem 5.2]{KP}.

\begin{theorem}
\label{tInfSuitable}
Suppose that there are infinitely many suitable $t$. Then Theorem \ref{tMain} holds.
\end{theorem}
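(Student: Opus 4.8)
The plan is to fix a suitable twist $t$ and show that $\mathrm{rk}\,E^t(K) = 1$, so that the hypothesis of infinitely many suitable $t$ immediately yields infinitely many twists of rank $1$ (modulo the finite ambiguity handled by Lemma \ref{lTorsion}). First I would use $(P4)$: since $\mathrm{rk}\,E^t(K) > 0$, it suffices to prove $\mathrm{rk}\,E^t(K) \leq 1$, and for this it is enough to show $\dim_{\FF_2}\mathrm{Sel}^2(E^t/K) \leq 3$; combined with $(P4)$ and the fact that the full $2$-torsion contributes $\dim_{\FF_2}E^t(K)[2] = 2$ to the Selmer group, the parity statement of Lemma \ref{lRootNumber} together with $(P1)$ (which forces the local root numbers in $T$ to behave as for $E$, whence $w(E^t/K) = w(E/K) = -1$ by \eqref{eNegRoot}) pins the Selmer dimension down to exactly $3$, giving rank exactly $1$. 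Actually the cleanest route: show $\dim_{\FF_2}\mathrm{Sel}^2(E^t/K) = 3$ outright from the Markov-chain machinery, then $1 \leq \mathrm{rk}\,E^t(K) \leq 3 - 2 = 1$.

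The core computation is to track the Selmer rank through the sequence of Selmer structures $\mathcal{L}_{i,t} = \mathcal{L}_{i,\boldsymbol{\pi}(t)}$ using Lemma \ref{lFinalStep} and Lemma \ref{lSelmerChange}. By $(P1)$ and $(P2)$, the ramified places of $\psi_t$ outside $T$ are $(\mathfrak{p}_1,\dots,\mathfrak{p}_s,(q_1),\dots,(q_4)) = (v_1,\dots,v_r)$ with $r = s+4$, so Lemma \ref{lFinalStep} gives
$$
\dim_{\FF_2}\mathrm{Sel}^2(E^t/K) = 2 + \dim_{\FF_2}\mathrm{Sel}_{\mathcal{L}_{r-1,t}}(G_K,E[2]) = 2 + \dim_{\FF_2}\mathrm{Sel}_{\mathcal{L}_{s+3,t}}(G_K,E[2]).
$$
By $(P3)$, $\dim_{\FF_2}\mathrm{Sel}_{\mathcal{L}_{s,t}}(G_K,E[2]) = 5$ with the prescribed basis $(z_1,z_2),\dots,(z_9,z_{10})$. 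Now I would add the places $(q_1)$, then $(q_2)$, then $(q_3)$ one at a time and apply Lemma \ref{lSelmerChange} at each step. The key point is that $\res_{v_{i+1}}$ of the current Selmer group, viewed through the invariant-of-cup-product pairing, is controlled by Hilbert reciprocity: $\sum_{v\in\Omega_K}\inv_v(z\cup\psi_{q})=0$, so the local condition at $(q)$ is read off from $\sum_{v\in T'}\inv_v(z\cup\psi_{q})$ (the places $v_1,\dots,v_s$ already carry the $\mathcal{L}_{s,t}$ condition, and unramified places away from $(q)$ contribute $0$). Equations \eqref{eP1}–\eqref{eP2} say exactly that $\res_{(q_1)}$ of the $5$-dimensional space has image of dimension $2$ spanned by classes cutting out a complement to $\mathcal{L}_{s+1,t,(q_1)}$, so that step drops the rank by $2$ (the $n_i=-2$ case of Lemma \ref{lSelmerChange}, once one checks the dimension-$2$ restriction hypothesis using that the two functionals $z\mapsto\sum\inv_v(z\cup\psi_{q_1})$ and the component in the $\lambda_1$-direction are linearly independent on the span of $z_1,z_2,z_3$). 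Similarly \eqref{eP3}–\eqref{eP4} drop the rank by $2$ at $(q_2)$ from $3$ to $1$, and \eqref{eP5} ensures that adding $(q_3)$ leaves the rank unchanged (the $n_i=0$ case), and likewise $(q_4)$ is a no-op; hence $\dim_{\FF_2}\mathrm{Sel}_{\mathcal{L}_{s+3,t}}(G_K,E[2]) = 1$ and $\dim_{\FF_2}\mathrm{Sel}^2(E^t/K) = 3$.

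The main obstacle I anticipate is the bookkeeping in applying Lemma \ref{lSelmerChange} correctly: that lemma's trichotomy is phrased in terms of $\dim_{\FF_2}\res_{v_{i+1}}(\mathrm{Sel}_{\mathcal{L}_{i,t}})$ and the auxiliary space $A_{v_{i+1}}$, and one must translate the cup-product conditions of $(P3)$ into statements about these restriction maps. Concretely, the Tate local pairing identifies $H^1(G_{K_v},E[2])/\mathcal{L}_{i,t,v}$ with the $\FF_2$-dual of $\mathcal{L}_{i,t,v}$ via cup product and $\inv_v$, and Poitou–Tate/Hilbert reciprocity converts a global class's restriction at $(q_j)$ into the linear functional $z\mapsto\sum_{v\in T'}\inv_v(z\cup\psi_{q_j})$ on the previous Selmer group; the conditions \eqref{eP1}–\eqref{eP5} are precisely engineered so that these functionals have the right ranks and kernels at each stage. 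One also needs the standard fact that at an unramified place $(q_j)$ of $E^t$ of the relevant reduction type, $\mathcal{L}_{i+1,t,(q_j)}$ is $2$-dimensional and the full $H^1$ is $4$-dimensional, so that a rank drop of exactly $2$ occurs when the restriction is surjective onto a transverse $2$-plane — this is where the explicit local description matters, and where I would lean on the analysis already carried out in \cite[Section 4--5]{KP}. Finally, once $\dim_{\FF_2}\mathrm{Sel}^2(E^t/K) = 3$ is established, $(P4)$ gives $\mathrm{rk}\,E^t(K)\geq 1$ while the exact sequence $0\to E^t(K)/2E^t(K)\to\mathrm{Sel}^2(E^t/K)\to\Sha(E^t/K)[2]\to 0$ together with $\dim_{\FF_2}E^t(K)[2]=2$ gives $\mathrm{rk}\,E^t(K)\leq 1$; hence $\mathrm{rk}\,E^t(K)=1$. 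Since there are infinitely many such $t$ and only finitely many are excluded by Lemma \ref{lTorsion}, Theorem \ref{tMain} follows. $\qed$
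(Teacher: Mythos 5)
Your proposal is correct and follows essentially the same route as the paper: after discarding the root-number detour you reduce to showing $\dim_{\FF_2}\mathrm{Sel}^2(E^t/K)=3$ via Lemma \ref{lFinalStep}, then track the Selmer rank $5\to 3\to 1\to 1$ through Lemma \ref{lSelmerChange}, converting the cup-product conditions of $(P3)$ into Frobenius values via Hilbert reciprocity exactly as the paper does. The paper's version of the step you flag as a potential obstacle is cleaner than what your parenthetical suggests: equations \eqref{eP1}--\eqref{eP2} plus reciprocity and the vanishing of unramified cup products already read off the full restriction vectors $(1,1),(1,0),(0,0),(0,0),(0,0)$ at $(q_1)$, so the rank-$2$ restriction is immediate with no separate linear-independence argument, and Lemma \ref{lFinalStep} stops at $\mathcal{L}_{s+3,t}$ so there is no ``$(q_4)$ step'' to worry about.
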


\begin{proof}
Let $t$ be suitable. We claim that
\begin{align}
\label{e3Claim}
\dim_{\FF_2} \mathrm{Sel}^2(E^t/K) = 3.
\end{align}
We first explain why the claim implies the theorem. To this end, we note that $E^t(K)[2] \cong \mathbb{F}_2^2$ and hence we have the inequality
$$
2 + \mathrm{rk} \, E^t(K) \leq \dim_{\FF_2} \mathrm{Sel}^2(E^t/K).
$$
Then $(P4)$ and the claim \eqref{e3Claim} force $\mathrm{rk} \, E^t(K) = 1$, as desired.

Hence it remains to establish \eqref{e3Claim}. It is a consequence of $(P1)$ and $(P2)$ that the ramified primes of $\psi_t$ are precisely $\{\mathfrak{p}_1, \dots, \mathfrak{p}_s, (q_1), (q_2), (q_3), (q_4)\}$. We now apply Lemma \ref{lFinalStep}. Note that the hypothesis \eqref{eLocalT} of this lemma is satisfied because of $(P1)$. Then the claim \eqref{e3Claim} is equivalent to
\begin{align}
\label{eSelClaim}
\dim_{\FF_2} \mathrm{Sel}_{\mathcal{L}_{s + 3, t}}(G_K, E[2]) = 1,
\end{align}
which we prove now.

In order to establish equation \eqref{eSelClaim}, we shall make the intermediate claims that
\begin{align}
\label{eIClaim1}
\mathrm{Sel}_{\mathcal{L}_{s + 1, t}}(G_K, E[2]) = \langle (z_5, z_6), (z_7, z_8), (z_9, z_{10}) \rangle
\end{align}
and that
\begin{align}
\label{eIClaim2}
\mathrm{Sel}_{\mathcal{L}_{s + 2, t}}(G_K, E[2]) = \langle (z_9, z_{10}) \rangle.
\end{align}
We start by proving equation \eqref{eIClaim1}. Hilbert reciprocity gives
$$
\sum_{v \in \Omega_K} \inv_v(z_i \cup \psi_{q_1}) = 0
$$
for every $i \in \{1, \dots, 10\}$. Using $(P3)$ (more specifically equations \eqref{eP1} and \eqref{eP2}) and that the cup product of unramified characters vanishes, we obtain that
\begin{alignat}{2}
&(z_1(\Frob_{q_1}), z_2(\Frob_{q_1})) &&= (1, 1) \label{e1Red} \\ 
&(z_3(\Frob_{q_1}), z_4(\Frob_{q_1})) &&= (1, 0) \label{e2Red} \\ 
&(z_5(\Frob_{q_1}), z_6(\Frob_{q_1})) &&= (0, 0) \label{e3Red} \\ 
&(z_7(\Frob_{q_1}), z_8(\Frob_{q_1})) &&= (0, 0) \label{e4Red} \\ 
&(z_9(\Frob_{q_1}), z_{10}(\Frob_{q_1})) &&= (0, 0). \label{e5Red}
\end{alignat}
In particular, we deduce that $\dim_{\FF_2} \res_{(q_1)}(\mathrm{Sel}_{\mathcal{L}_{s, t}}(G_K, E[2])) = 2$, and hence we obtain
$$
\dim_{\FF_2} \mathrm{Sel}_{\mathcal{L}_{s + 1, t}}(G_K, E[2]) = 3
$$
from Lemma \ref{lSelmerChange}. Inspecting the equations \eqref{e1Red}, \eqref{e2Red}, \eqref{e3Red}, \eqref{e4Red}, \eqref{e5Red} one more time, we see that $(z_5, z_6), (z_7, z_8), (z_9, z_{10})$ are clearly in $\mathrm{Sel}_{\mathcal{L}_{s + 1, t}}(G_K, E[2])$, but they are also linearly independent by $(P3)$ (as they are part of a basis). This proves equation \eqref{eIClaim1}.

The proof of the second intermediate claim \eqref{eIClaim2} proceeds among the same lines as the proof of the first intermediate claim \eqref{eIClaim1}, and is omitted. Then it remains to show that equation \eqref{eIClaim2} implies \eqref{eSelClaim}. Arguing as before and using equation \eqref{eP5}, we see that
$$
\dim_{\FF_2} \res_{(q_3)}(\mathrm{Sel}_{\mathcal{L}_{s + 2, t}}(G_K, E[2])) = 1.
$$
Therefore we must be in the third case of Lemma \ref{lSelmerChange}, and the theorem follows.
\end{proof}

Our next reduction step constructs infinitely many suitable $t$ assuming that $\kappa$ has a certain convenient shape. We will establish the existence of such $\kappa$ in Theorem \ref{tFinal}. 

Using the fact that the elliptic curve $E$ is $2$-generic, we fix principal primes $w_1, \dots, w_5 \in T$ with generators $\lambda_1, \dots, \lambda_5$ such that
\begin{itemize}
\item $\lambda_1, \dots, \lambda_5$ are totally positive, satisfy $\lambda_i \equiv 1 \bmod 8O_K$ and satisfy $|O_K/\lambda_i| > 5$,
\item $w_1(\alpha) \equiv w_5(\alpha) \equiv 1 \bmod 2$, and $\beta$ and $\gamma$ are invertible squares modulo $w_1$ and $w_5$, 
\item $w_2(\beta) \equiv w_4(\beta) \equiv 1 \bmod 2$, and $\alpha$ and $\gamma$ are invertible squares modulo $w_2$ and $w_4$,
\item $w_3(\gamma) \equiv 1 \bmod 2$ and $\alpha$ and $\beta$ are invertible squares modulo $w_3$. 
\end{itemize}
In lieu of the presence of these elements, we will denote, for the rest of the paper, Selmer classes multiplicatively (as pairs of elements in $K^{\ast}/K^{\ast 2}$, rather than a pair of additive characters), as this will significantly lighten up the notation.  
Then we have by Lemma \ref{lSelmeratBad}
\begin{align}
&\delta(E(K_{w_1})) = \langle (\lambda_1, \lambda_1), (\epsilon_{w_1}, \epsilon_{w_1}) \rangle \label{eDeltaw1} \\
&\delta(E(K_{w_2})) = \langle (\lambda_2, 1), (\epsilon_{w_2}, 1) \rangle \label{eDeltaw2} \\
&\delta(E(K_{w_3})) = \langle (1, \lambda_3), (1, \epsilon_{w_3}) \rangle \label{eDeltaw3} \\
&\delta(E(K_{w_4})) = \langle (\lambda_4, 1), (\epsilon_{w_4}, 1) \rangle \label{eDeltaw4} \\
&\delta(E(K_{w_5})) = \langle (\lambda_5, \lambda_5), (\epsilon_{w_5}, \epsilon_{w_5}) \rangle, \label{eDeltaw5}
\end{align}
where we have written $\epsilon_{w_i}$ for the unique non-trivial unramified class.

\begin{mydef} 
\label{def: auxiliary triple}
We call $\kappa \in K^\ast/K^{\ast 2}$ an auxiliary twist if the following conditions simultaneously hold
\begin{enumerate}
\item[$(K1)$] $\psi_\kappa$ is locally trivial at $T$ and unramified at all places dividing $\alpha \beta \gamma$;
\item[$(K2)$] writing $\mathfrak{p}_1, \dots, \mathfrak{p}_s$ for the ramification locus of $\psi_\kappa$, there exists $\boldsymbol{\pi} = (\pi_i)_{1 \leq i \leq s}$ such that $\mathrm{Sel}_{\mathcal{L}_{s, \boldsymbol{\pi}}}(G_K, E[2])$ has a basis
$$
(z_1, z_2), (z_3, z_4), (z_5, z_6), (z_7, z_8), (z_9, z_{10})
$$
such that the valuations of $z_i$ (modulo $2$) at $w_1, \dots, w_5$ are given by the table
\begin{align}
\label{eInfMat1}
\begin{pmatrix}
& w_1 & w_2 & w_3 & w_4 & w_5 \\
w_i(\text{im}(\delta)) & (1, 1) & (1, 0) & (0, 1) & (1, 0) & (1, 1) \\
z_1    & 1 & 0 & 0 & 0 & 0 \\
z_2    & 1 & 0 & 0 & 0 & 0 \\
z_3    & 0 & 1 & 0 & 0 & 0 \\
z_4    & 0 & 0 & 0 & 0 & 0 \\
z_5    & 0 & 0 & 0 & 0 & 0 \\
z_6    & 0 & 0 & 1 & 0 & 0 \\
z_7    & 0 & 0 & 0 & 1 & 0 \\
z_8    & 0 & 0 & 0 & 0 & 0 \\
z_9    & 0 & 0 & 0 & 0 & 1 \\
z_{10} & 0 & 0 & 0 & 0 & 1 
\end{pmatrix}
.
\end{align}
\end{enumerate}
\end{mydef}

\noindent Write $N$ for the product of odd places in $T - \{w_1, \dots, w_5\}$. Our next theorem is the technical heart of our argument, and uses additive combinatorics as a key input.

\begin{theorem} 
\label{thm: auxiliary implies suitable}
Assume that there exists an auxiliary twist $\kappa \in K^\ast/K^{\ast 2}$. Then there exist infinitely many suitable $t \in K^\ast/K^{\ast 2}$.
\end{theorem}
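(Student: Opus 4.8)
The plan is to produce the suitable twists in the form $t = \kappa\, q_1 q_2 q_3 q_4$, where $q_1,q_2,q_3,q_4$ are prime elements of $O_K$ that are totally positive, congruent to $1 \bmod 8O_K$, pairwise coprime and coprime to $N\alpha\beta\gamma\,\mathfrak{p}_1\cdots\mathfrak{p}_s$, and whose residues at finitely many auxiliary primes satisfy congruences to be specified. First I would arrange $\res_{\mathfrak{p}_j}(t) = \pi_j$ for $j=1,\dots,s$; this is a congruence condition on the product $q_1q_2q_3q_4$ modulo each $\mathfrak{p}_j$, which I would meet by keeping all $q_i$ in a common square class modulo $\mathfrak{p}_j$ (the ``same congruence modulo the pre-twisting primes'' phenomenon from the introduction). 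Once this holds the Selmer structures $\mathcal{L}_{s,t}$ and $\mathcal{L}_{s,\boldsymbol{\pi}}$ literally coincide, so $\mathrm{Sel}_{\mathcal{L}_{s,t}}(G_K,E[2]) = \mathrm{Sel}_{\mathcal{L}_{s,\boldsymbol{\pi}}}(G_K,E[2])$ and I may take over verbatim the basis $(z_1,z_2),\dots,(z_9,z_{10})$ and the valuation table \eqref{eInfMat1} furnished by $(K2)$. Then $(P1)$ and $(P2)$ reduce to congruence conditions on the $q_i$ at the places of $T$ (automatic at the archimedean places by total positivity), and $(P3)$ I would reduce to Frobenius conditions as follows.

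Since each $z_i$ is unramified outside $T' = T \cup \{\mathfrak{p}_1,\dots,\mathfrak{p}_s\}$ while $\psi_{q_j}$ is ramified only at $(q_j) \notin T'$, Hilbert reciprocity applied to $z_i \cup \psi_{q_j} \in H^2(G_K,\mu_2)$ gives
$$
\sum_{v \in T'} \inv_v(z_i \cup \psi_{q_j}) = \inv_{(q_j)}(z_i \cup \psi_{q_j}) = z_i(\Frob_{q_j}),
$$
exactly as in the proof of Theorem \ref{tInfSuitable}. Hence \eqref{eP1}--\eqref{eP5} prescribe the values $z_i(\Frob_{q_1})$ for $1 \le i \le 10$, $z_i(\Frob_{q_2})$ for $5 \le i \le 10$, and $z_i(\Frob_{q_3})$ for $9 \le i \le 10$, and by quadratic reciprocity (clean here because the $q_j$ are totally positive and $\equiv 1 \bmod 8O_K$) these become congruence conditions on $q_1,q_2,q_3$ modulo the squarefree, $T'$-supported conductors of the $z_i$. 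Applying Hilbert reciprocity instead to $z_i \cup \psi_t$ and using $\psi_t = \psi_\kappa + \sum_j \psi_{q_j}$ together with $\sum_v \inv_v(z_i \cup \psi_\kappa) = 0$, I get $\sum_{j=1}^{4} z_i(\Frob_{q_j}) = 0$ for every $i$; this pins down the residue behavior of $q_4$ once $q_1,q_2,q_3$ are fixed, so $q_4$ carries no new $(P3)$-constraint but must still be compatible with these forced conditions and with the $(P1)$- and $\mathfrak{p}_j$-conditions.

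To secure $(P4)$ I would exhibit a rational point on $E^t$: using a parametrization of the relevant $2$-descent data, arrange that $x - ta_1$, $x - ta_2$, $x - ta_3$ equal fixed $T'$-supported scalars times products of the $q_j$ times squares, so that $(x - ta_1)(x - ta_2)(x - ta_3) \equiv \kappa\, q_1 q_2 q_3 q_4 = t \bmod K^{\ast 2}$ and the associated descent conics are everywhere locally soluble. Local solubility at the odd primes of $T'$ — including the split-multiplicative places $w_1,\dots,w_5$, whose local images $\delta(E(K_{w_i}))$ are recorded in \eqref{eDeltaw1}--\eqref{eDeltaw5} — is exactly what Lemma \ref{lfinitefields} supplies, while solubility at the dyadic and archimedean places is automatic from the $q_j$ being totally positive and $\equiv 1 \bmod 8O_K$. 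With the linear forms fixed, all the congruence conditions above are imposed by restricting the parameter vector to a suitable residue class, and Kai's theorem \cite[Theorem A.8]{Kai} then produces infinitely many parameter vectors for which $q_1,q_2,q_3,q_4$ are simultaneously prime elements in the prescribed classes. Each such vector gives a twist $t$ satisfying $(P1)$, $(P2)$ and $(P3)$; the point constructed on $E^t(K)$ has $x$-coordinate avoiding $ta_1,ta_2,ta_3$, hence is not $2$-torsion, hence non-torsion for all but finitely many of these $t$ by Lemma \ref{lTorsion}, which gives $(P4)$. Distinct parameter vectors yield distinct classes $t$ apart from finitely many collisions, so there are infinitely many suitable $t$.

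The hard part will be the simultaneous solubility of the entire system of congruences — the parity conditions at the primes of $T$ and at the $\mathfrak{p}_j$, the Frobenius conditions \eqref{eP1}--\eqref{eP5}, and the finite-field conditions forced by the descent — while keeping the resulting family of linear forms admissible for Kai's theorem (pairwise non-proportional, no fixed prime divisor). The delicate point is that the conductors of the $z_i$ may involve primes of $T$, in particular the split-multiplicative places $w_1,\dots,w_5$, so the Frobenius conditions on the $q_j$ become entangled with the $(P1)$-conditions there; this is precisely the ``swapping quadratic symbols'' difficulty highlighted in the introduction. I expect it to be handled by exploiting the degrees of freedom at $w_1,\dots,w_5$ recorded in \eqref{eDeltaw1}--\eqref{eDeltaw5} and engineered into the table \eqref{eInfMat1}, these non-archimedean places of split multiplicative reduction playing the role of the auxiliary real places used in \cite{KP}.
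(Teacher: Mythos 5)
Your broad strategy matches the paper's: take $t = \kappa\, q_1 q_2 q_3 q_4$ with the $q_j$ produced as simultaneous prime values of a system of affine linear forms via Kai's theorem, force $\mathcal{L}_{s,t} = \mathcal{L}_{s,\boldsymbol{\pi}}$ by congruences at the $\mathfrak{p}_j$, reduce $(P3)$ to conditions at places of $T'$ by Hilbert reciprocity, and handle $(P4)$ by rigging the $q_j$ so that $E^t$ acquires a visible point. Two issues in the plan are substantive.

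First, your route to $(P4)$ does not work as stated. You propose to arrange that the relevant descent data give an everywhere-locally-soluble $2$-covering of $E^t$; but for $2$-coverings of an elliptic curve the local-global principle fails (this is precisely $\Sha$), so local solubility produces only a $2$-Selmer element, not a rational point, and hence does not yield $\mathrm{rk}\,E^t(K) > 0$. The paper avoids this entirely: the four linear forms are engineered so that $L_i = c - a_i d$ for $i \in \{1,2,3\}$ and $d = \rho^2\kappa L_4$, whence
$$
t = \kappa\, q_1 q_2 q_3 q_4 = \rho^{-2}\, d\, (c - a_1 d)(c - a_2 d)(c - a_3 d),
$$
so that $(c/d,\ \rho/d^2)$ is an \emph{explicit} point of $E^t(K)$, regardless of any Selmer or $\Sha$ considerations; Lemma \ref{lTorsion} then upgrades this to positive rank for all but finitely many twists. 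This identity must be built into the shape of the linear forms from the start, because it constrains all four $q_j$ simultaneously.

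Second, the reconciliation between the $(P3)$-conditions and this rational-point parametrization — the step you flag as ``the hard part'' — is not just a delicate book-keeping issue: it is \emph{the} content of the theorem, and your proposal does not actually resolve it. The $(P3)$-conditions together with the $(P1)$-condition $q_1 q_2 q_3 q_4 \equiv \square \bmod \lambda_i$ prescribe the Legendre symbols of all four forms $L_1, L_2, L_3, L_4$ at each of $\lambda_1,\dots,\lambda_5$, i.e.\ four constraints per prime, while only two free parameters $\mu_1, \mu_2$ (the constant terms) are available, and generically four linear conditions in two unknowns are insoluble. The paper's key observation is that because $L_i \equiv c - a_i d \bmod w_l$ and the $a_i$ coincide in pairs modulo each $w_l$ (this is exactly what $2$-genericity supplies at the split multiplicative places), two of the four forms are congruent modulo each $\lambda_l$, collapsing four constraints to three — which is then precisely what Lemma \ref{lfinitefields} can satisfy. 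So Lemma \ref{lfinitefields} is not used for local solubility of conics, as you suggest, but to select $\mu_1, \mu_2$ realizing the Legendre-symbol table \eqref{eInfMat2}; and the ``two coinciding demands per $w_i$'' mechanism is the non-archimedean analogue of the real-place manipulations in \cite{KP} announced in the introduction. A smaller remark: the constraint $\sum_{j=1}^4 z_i(\Frob_{q_j}) = 0$ you derive from Hilbert reciprocity applied to $z_i \cup \psi_t$ is an identity $2\sum_j z_i(\Frob_{q_j}) = 0$ in $\mathbb{F}_2$ and carries no information; the genuine constraint on $q_4$ comes from $(P1)$, not from $(P3)$.
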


\begin{proof}
The proof will proceed in several steps. Our eventual goal will be to apply the additive combinatorics result in \cite[Theorem A.8]{KP} (which is based on work of Kai \cite{Kai}). This result requires linear forms and a convex region $\Omega$ as an input. We start by constructing the linear forms. After this, we construct the convex region $\Omega$. In order to show that the main term in \cite[Theorem A.8]{KP} dominates, we must check that the linear forms are admissible (i.e.~there is no congruence obstruction to representing prime elements) and that the volume of $\Omega$ grows sufficiently fast. After we have done this, we apply the aforementioned additive combinatorics result, and then proceed to check $(P1)$, $(P2)$, $(P3)$ and $(P4)$.

\subsubsection*{Construction of linear forms}
By strong approximation and by our assumption $(K1)$, we may choose an integral representative for $\kappa$ that is coprime to $\alpha \beta \gamma$, totally positive and satisfies
$$
\kappa \equiv 1 \bmod 8N \lambda_1 \cdots \lambda_5.
$$
Take $\rho$ to be a generator of $8 N^{h_K}$. We invoke $(K2)$ to get primes $\mathfrak{p}_1, \dots, \mathfrak{p}_s$ that form the ramification locus of $\psi_\kappa$ and a choice of $\boldsymbol{\pi} = (\pi_i)_{1 \leq i \leq s}$ such that $\mathrm{Sel}_{\mathcal{L}_{s, \boldsymbol{\pi}}}(G_K, E[2])$ has a basis
$$
(z_1, z_2), \quad (z_3, z_4), \quad (z_5, z_6), \quad (z_7, z_8), \quad (z_9, z_{10})
$$
with the parities of the valuations at $w_1, \dots, w_5$ as given by the table \eqref{eInfMat1}.

By strong approximation, there exists $\lambda \in O_K$ coprime to $\kappa$ such that
\begin{align}
\label{elambdachoice}
&\res_{\mathfrak{p}_i}(\kappa \lambda) = \pi_i \quad \text{for all } 1 \leq i \leq s \\
&\lambda \equiv 1 \bmod \rho. \label{elambdachoice2}
\end{align}
By strong approximation and Lemma \ref{lfinitefields}, there exist $\mu_1, \mu_2 \in O_K$ such that the Legendre symbols of the following four linear forms are given by the table
\begin{align}
\label{eInfMat2}
\begin{pmatrix}
& \left(\frac{\cdot}{\lambda_1}\right) & \left(\frac{\cdot}{\lambda_2}\right) & \left(\frac{\cdot}{\lambda_3}\right) & \left(\frac{\cdot}{\lambda_4}\right) & \left(\frac{\cdot}{\lambda_5}\right) \\
\rho^2 \kappa \mu_1 - a_1 \rho^4 \kappa^2 \mu_2 - a_1 \rho^2 \kappa \lambda + 1 & -1 & -1 & +1 & +1 & +1 \\
\rho^2 \kappa \mu_1 - a_2 \rho^4 \kappa^2 \mu_2 - a_2 \rho^2 \kappa \lambda + 1 & -1 & +1 & -1 & -1 & +1 \\
\rho^2 \kappa \mu_1 - a_3 \rho^4 \kappa^2 \mu_2 - a_3 \rho^2 \kappa \lambda + 1 & +1 & -1 & -1 & +1 & -1 \\
\rho^2 \kappa \mu_2 + \lambda & +1 & +1 & +1 & -1 & -1
\end{pmatrix}
.
\end{align}
We are able to apply Lemma \ref{lfinitefields}, which involves only three linear constraints, because two linear demands coincide for each $w_i$ (the first two if $w_i(\alpha) > 0$, the last two if $w_i(\gamma) > 0$, the first and the third if $w_i(\beta) > 0$). This phenomenon can be viewed as a non-archimedean analogue of the argument on bottom of \cite[p.~20]{KP}. 

We define $m := \lambda_1 \cdots \lambda_5$ and we define four affine linear forms $L_1, L_2, L_3, L_4 \in O_K[X, Y]$ 
\begin{align*}
L_1(X, Y) &:= \rho^2 \kappa \left(m X + \mu_1\right) - a_1 \rho^2 \kappa \left(\rho^2 \kappa \left(m Y+\mu_2\right) + \lambda\right) + 1 \\
L_2(X, Y) &:= \rho^2 \kappa \left(m X + \mu_1\right) - a_2 \rho^2 \kappa \left(\rho^2 \kappa \left(m Y + \mu_2\right) + \lambda\right) + 1 \\
L_3(X, Y) &:= \rho^2 \kappa \left(m X + \mu_1\right) - a_3 \rho^2 \kappa \left(\rho^2 \kappa \left(m Y + \mu_2\right) + \lambda\right) + 1 \\
L_4(X, Y) &:= \rho^2 \kappa \left(m Y + \mu_2\right) + \lambda.
\end{align*}
Recall that the linear forms $L_1, L_2, L_3, L_4$ are called admissible if for every finite prime ideal $\mathfrak{p}$, there exists $u, v \in O_K$ such that $L_1(u, v) L_2(u, v) L_3(u, v) L_4(u, v) \not \equiv 0 \bmod \mathfrak{p}$.

\begin{lemma}
\label{lAdmissible}
The linear forms $L_1, L_2, L_3, L_4$ are admissible.
\end{lemma}

\begin{proof}
We distinguish four cases. Firstly, let $\mathfrak{p}$ be a prime ideal dividing $\rho$. In that case we take $u = v = 0$, and we have $L_i(u, v) \equiv 1 \bmod \mathfrak{p}$ for $i \in \{1, 2, 3\}$ and $L_4(u, v) \equiv \lambda \equiv 1 \bmod \mathfrak{p}$ by equation \eqref{elambdachoice2}.

Secondly, suppose that $\mathfrak{p}$ divides $m$. Then we take $u = v = 0$, so
\begin{align*}
&L_i(u, v) \equiv \rho^2 \kappa \mu_1 - a_i \rho^4 \kappa^2 \mu_2 - a_i \rho^2 \kappa \lambda + 1 \bmod \mathfrak{p} \quad \quad \text{ for } i \in \{1, 2, 3\}, \\
&L_4(u, v) \equiv \rho^2 \kappa \mu_2 + \lambda \bmod \mathfrak{p}.
\end{align*}
In all cases, we see that $L_i(u, v)$ is invertible modulo $\mathfrak{p}$ by construction of $\mu_1$ and $\mu_2$. 

Thirdly, let $\mathfrak{p}$ be a prime dividing $\kappa$. We again take $u = v = 0$, and then we have
\begin{align*}
&L_i(u, v) \equiv 1 \bmod \mathfrak{p} \quad \quad \text{ for } i \in \{1, 2, 3\}, \\
&L_4(u, v) \equiv \lambda \bmod \mathfrak{p}.
\end{align*}
Recalling that $\lambda$ is coprime to $\mathfrak{p}$ by construction, this case is now also covered.

Finally, let $\mathfrak{p}$ be any place not dividing $\rho m \kappa$. Observe that this implies $\mathfrak{p} \nmid 6$. Now pick any $v$ such that $\rho^2 \kappa m v + \rho^2 \kappa \mu_2 + \lambda \not \equiv 0 \bmod \mathfrak{p}$. Since $\mathfrak{p} \nmid 6$, we have $|O_K/\mathfrak{p}| \geq 5$, and hence there exists $u$ such that
$$
u \bmod \mathfrak{p} \not \in \left\{\frac{-\rho^2 \kappa \mu_1 - 1 + a_i \rho^2 \kappa \left(\rho^2 \kappa \left(m v + \mu_2\right) + \lambda\right)}{\rho^2 \kappa m} \bmod \mathfrak{p} : i \in \{1, 2, 3\}\right\}.
$$
With these choices, we directly verify that $L_1(u, v) L_2(u, v) L_3(u, v) L_4(u, v) \not \equiv 0 \bmod \mathfrak{p}$ ending the proof of the lemma.
\end{proof}

\subsubsection*{Construction of a convex region}
We will now construct a region $\Omega$ to which we apply Kai's theorem as stated in \cite[Theorem A.8]{KP}. Fix an integral basis $\omega_1, \dots, \omega_n$ of $O_K$, and define the affine linear form $\widetilde{L_i}: \Z^{2n} \rightarrow O_K$ by
$$
(a_1, \dots, a_n, b_1, \dots, b_n) \mapsto \widetilde{L_i}(a_1 \omega_1 + \dots + a_n \omega_n, b_1 \omega_1 + \dots + b_n \omega_n).
$$
For each real embedding $\sigma: K \rightarrow \mathbb{R}$, this gives an affine linear map $\sigma \circ \widetilde{L_i}: \Z^{2n} \rightarrow \R$, and this gives an affine linear map $\sigma \circ \widetilde{L_i}: \R^{2n} \rightarrow \R$ by viewing $\Z^{2n}$ inside $\R^{2n}$ in the obvious way. Take
$$
\Omega := \{(\mathbf{u}, \mathbf{v}) \in \R^{2n} : \sigma(\widetilde{L_i}(\mathbf{u}, \mathbf{v})) > 0 \text{ for all real places } \sigma, \text{ for all } i \in \{1, 2, 3, 4\}\}.
$$

\begin{lemma}
\label{lOmegaVol}
There exists $C > 0$ such that
$$
|\Omega \cap [-B, B]^{2n}| \sim C B^{2n}
$$
as $B \rightarrow \infty$.
\end{lemma}

\begin{proof}
Let $\sigma: K \rightarrow \mathbb{R}$ be a real embedding. We will now rewrite the four inequalities
$$
\sigma(\widetilde{L_i}(\mathbf{u}, \mathbf{v})) > 0 \text{ for all } i \in \{1, 2, 3, 4\}
$$
as two inequalities. Define $i_{1, \sigma}$ to be the unique index $i$ where $\sigma(a_i)$ is maximal and define $i_{2, \sigma} := 4$. Then the four inequalities are equivalent to
$$
\sigma(\widetilde{L_{i_{1, \sigma}}}(\mathbf{u}, \mathbf{v})) > 0 \quad \quad \sigma(\widetilde{L_{i_{2, \sigma}}}(\mathbf{u}, \mathbf{v})) > 0.
$$
Define $M_{1, \sigma}$ and $M_{2, \sigma}$ to be the homogeneous linear forms corresponding to $\widetilde{L_{i_{1, \sigma}}}$ and $\widetilde{L_{i_{2, \sigma}}}$. Our lemma is a consequence of \cite[Lemma 5.6]{KP} provided that we check that the system of linear forms $\{M_{1, \sigma} : \sigma \text{ real}\} \cup \{M_{2, \sigma} : \sigma \text{ real}\}$ are linearly independent over $\R$. Writing $\widetilde{\sigma}: \mathbb{R}^n \rightarrow \mathbb{R}$ for the extension of $\sigma$, the distinctness of the $a_i$ implies
$$
M_{1, \sigma}(\mathbf{u}, \mathbf{v}) = M_{2, \sigma}(\mathbf{u}, \mathbf{v}) = 0 \Longleftrightarrow \widetilde{\sigma}(\mathbf{u}) = \widetilde{\sigma}(\mathbf{v}) = 0.
$$
Hence the linear independence follows from the linear independence of $\widetilde{\sigma}$.
\end{proof}

We now apply \cite[Theorem A.8]{KP}. From the asymptotic in \cite[Theorem A.8]{KP}, by Lemma \ref{lAdmissible} and Lemma \ref{lOmegaVol}, this result provides infinitely many quadruples $(q_{1, j}, q_{2, j}, q_{3, j}, q_{4, j})_{j \geq 1}$ attained by the linear forms $L_1, L_2, L_3, L_4$, where the $q_{i, j}$ are coprime to $\mathfrak{p}_1, \dots, \mathfrak{p}_s$ and $T$ and where the ideals $(q_{i, j})$ are all distinct.

Now fix one such quadruple $(q_1, q_2, q_3, q_4)$, and define
\begin{equation}
\label{eKappat}
t := \kappa q_1 q_2 q_3 q_4.
\end{equation}
It remains to check that outside finitely many such $q_1, q_2, q_3, q_4$, the properties $(P1)$, $(P2)$, $(P3)$ and $(P4)$ hold.

\subsubsection*{Verification of $(P1)$ and $(P2)$}
As for $(P1)$, note that
$$
q_1 \equiv q_2 \equiv q_3 \equiv q_4 \equiv 1 \bmod 8N.
$$
Moreover, for the places $w_1, \dots, w_5$ an inspection of the table \eqref{eInfMat2} reveals for all $i \in \{1, \dots, 5\}$ the identity
$$
q_1 q_2 q_3 q_4 \equiv \square \bmod \lambda_i.
$$
Since $\psi_\kappa$ is locally trivial at all places in $T$, so is 
$$
\psi_t = \psi_\kappa + \psi_{q_1} + \psi_{q_2} + \psi_{q_3} + \psi_{q_4}.
$$
As for $(P2)$, this follows from the fact that we chose $q_1, q_2, q_3, q_4$ coprime to $\mathfrak{p}_1, \dots, \mathfrak{p}_s$ and $T$.

\subsubsection*{Verification of $(P3)$}
For $(P3)$, recall that we fixed a choice of $\boldsymbol{\pi} = (\pi_i)_{1 \leq i \leq s}$ satisfying $(K2)$. By our choice of linear forms, we have the congruences
\begin{align}
\label{eqCong}
q_1 \equiv q_2 \equiv q_3 \equiv 1 \bmod 8N \kappa, \quad \quad q_4 \equiv \lambda \bmod 8N \kappa.
\end{align}
Therefore our choice of $\lambda$, see equation \eqref{elambdachoice}, implies that 
$$
\pi_i(t) = \pi_i(\kappa q_1 q_2 q_3 q_4) = \res_{\mathfrak{p}_i}(\kappa q_1 q_2 q_3 q_4) = \res_{\mathfrak{p}_i}(\kappa \lambda) = \pi_i
$$
for all $1 \leq i \leq s$. Hence we have
$$
\mathrm{Sel}_{\mathcal{L}_{s, t}}(G_K, E[2]) = \mathrm{Sel}_{\mathcal{L}_{s, \boldsymbol{\pi}}}(G_K, E[2]).
$$
We will now derive $(P3)$ as a consequence of the matrices \eqref{eInfMat1} and \eqref{eInfMat2}. Since $q_1, q_2, q_3$ are totally positive and satisfy the congruence $q_i \equiv 1 \bmod 8N \kappa$ by \eqref{eqCong}, we observe that
$$
\sum_{v \in T'} \inv_v(z_i \cup q_j) = \sum_{v \in \{w_1, w_2, w_3, w_4, w_5\}} \inv_v(z_i \cup q_j),
$$
which we will now use to verify \eqref{eP1}, \eqref{eP2}, \eqref{eP3}, \eqref{eP4}, \eqref{eP5}.

\paragraph{Proof of equation \eqref{eP1} and \eqref{eP2}}
Since $\psi_{q_1}$ is locally trivial at $w_3, w_4, w_5$ and a non-trivial unit at $w_1, w_2$ by \eqref{eInfMat2}, we get
$$
\sum_{v \in \{w_1, w_2, w_3, w_4, w_5\}} \inv_v(z_i \cup q_1)  = w_1(z_i) + w_2(z_i).
$$
The result follows from an inspection of the table \eqref{eInfMat1}.

\paragraph{Proof of equation \eqref{eP3} and \eqref{eP4}}
Since $\psi_{q_2}$ is locally trivial at $w_3, w_4, w_5$ and a non-trivial unit at $w_1, w_2$ by \eqref{eInfMat2}, we obtain
$$
\sum_{v \in \{w_1, w_2, w_3, w_4, w_5\}} \inv_v(z_i \cup q_2)  = w_1(z_i) + w_3(z_i) + w_4(z_i).
$$
The result follows from an inspection of the table \eqref{eInfMat1}.

\paragraph{Proof of equation \eqref{eP5}.} 
Since $\psi_{q_3}$ is locally trivial at $w_1, w_4$ and a non-trivial unit at $w_2, w_3, w_5$ by \eqref{eInfMat2}, we deduce
$$
\sum_{v \in \{w_1, w_2, w_3, w_4, w_5\}} \inv_v(z_i \cup q_3)  = w_2(z_i) + w_3(z_i) + w_5(z_i).
$$
The result follows from an inspection of the table \eqref{eInfMat1}.

\subsubsection*{Verification of $(P4)$}
For $(P4)$, we take $(x, y) \in O_K^2$ with $q_{i, j} = L_i(x, y)$ for $i \in \{1, 2, 3, 4\}$. We define
$$
c := \rho^2 \kappa \left(m x + \mu_1\right) + 1, \quad \quad d := \rho^2 \kappa (\rho^2 \kappa \left(m y + \mu_2\right) + \lambda).
$$
Then we have by construction
$$
L_i(x, y) = c - a_i d, \quad \quad d = \rho^2 \kappa L_4(x, y).
$$
Recalling equation \eqref{eKappat}, we see that
$$
t = \kappa q_{1, j} q_{2, j} q_{3, j} q_{4, j} = \rho^{-2} d (c - a_1 d) (c - a_2 d) (c - a_3 d),
$$
and thus the quadratic twist $E^t$ has the rational point $(c/d, \rho/d^2)$. Hence, outside of finitely many quadruples $(q_{1, j}, q_{2, j}, q_{3, j}, q_{4, j})_{i \geq 1}$, property $(P4)$ follows from Lemma \ref{lTorsion}.
\end{proof}

\subsection{Construction of an auxiliary twist}
In order to complete the proof of Theorem \ref{tMain}, it remains to prove that an auxiliary twist exists. In our later arguments, it will be convenient to gain some partial control on a slightly modified Selmer group, relaxing our local conditions at the places in $T$. Given $v_1, \dots, v_i \not \in T$, we introduce the Selmer structure $\mathcal{L}_{i, \boldsymbol{\pi}}^{\text{ray}} = (\mathcal{L}_{i, \boldsymbol{\pi}, v}^{\text{ray}})_{v \in \Omega_K}$ given by
$$
\mathcal{L}_{i, \boldsymbol{\pi}, v}^{\text{ray}} = 
\begin{cases}
H^1(G_{K_v} , E[2]) &\text{if } v \in T, \\
\langle (\alpha \beta, \pi_j \alpha), (-\pi_j \alpha, - \alpha \gamma) \rangle &\text{if } v = v_j \text{ for some } j \in \{1, \dots, i\}, \\
H^1_{\text{nr}}(G_{K_v}, E[2]) &\text{otherwise.}
\end{cases}
$$
We define 
$$
\text{Diag}(i, \pi) := \dim_{\mathbb{F}_2} V_1(i, \boldsymbol{\pi}) + \dim_{\mathbb{F}_2} V_2(i, \boldsymbol{\pi}) + \dim_{\mathbb{F}_2} V_3(i, \boldsymbol{\pi}),
$$
where $V_1(i, \boldsymbol{\pi}) , V_2(i, \boldsymbol{\pi}), V_3(i, \boldsymbol{\pi})$ are the subspace of $\mathrm{Sel}_{\mathcal{L}_{i, \boldsymbol{\pi}}^{\text{ray}}}(G_K , E[2])$ consisting respectively of elements of the form $(x, 1)$, $(1, x)$ and $(x, x)$. 

Finally, we will frequently use a basic lemma on Legendre symbols, so we state it now.

\begin{lemma}
\label{lFlip}
Let $x, y \in O_K$ be odd. Suppose that $x$ is a square locally at all $2$-adic places and all infinite places. Then
$$
\left(\frac{x}{y}\right) = \left(\frac{y}{x}\right).
$$
\end{lemma}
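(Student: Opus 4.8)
The statement to prove is Lemma~\ref{lFlip}, a quadratic reciprocity statement: for odd $x,y \in O_K$ with $x$ a local square at all $2$-adic and all infinite places, $\left(\frac{x}{y}\right) = \left(\frac{y}{x}\right)$.

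\medskip

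The plan is to deduce this from Hilbert reciprocity applied to the cup product $\psi_x \cup \psi_y \in H^2(G_K, \mathbb{F}_2) = \mathrm{Br}(K)[2]$, exactly in the spirit of the argument already used in the proof of Lemma~\ref{lGenericwithoddrank}. Concretely, Hilbert reciprocity gives
$$
\sum_{v \in \Omega_K} \inv_v(\psi_x \cup \psi_y) = 0,
$$
and the local invariant $\inv_v(\psi_x \cup \psi_y)$ equals the Hilbert symbol $(x,y)_v \in \frac{1}{2}\Z/\Z \cong \mathbb{F}_2$. So I would split the sum over three types of places and show that the only surviving contributions encode precisely $\left(\frac{x}{y}\right)$ and $\left(\frac{y}{x}\right)$.

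\medskip

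First I would handle the archimedean places: since $x$ is a square at every infinite place (real or complex), $\psi_x$ restricts to $0$ there, so $(x,y)_v = 0$ for all $v \mid \infty$. Next I would handle the $2$-adic places: again $x$ is a local square at every $v \mid 2$ by hypothesis, so $\psi_x$ is locally trivial and $(x,y)_v = 0$ for all $v \mid 2$. Finally, for a finite place $v \nmid 2$, the tame Hilbert symbol formula gives $(x,y)_v = 0$ unless $v$ divides $x$ or $y$: if $v \mid y$ but $v \nmid x$ (so $v(y)$ may be taken odd, $v(x)=0$) then $(x,y)_v$ equals the image of $\left(\frac{x}{v}\right)$, and summing these contributions over $v \mid y$ (using that $x,y$ are odd and multiplicativity) recovers $\left(\frac{x}{y}\right)$; symmetrically the places $v \mid x$, $v \nmid y$ contribute $\left(\frac{y}{x}\right)$; and places dividing both $x$ and $y$ contribute nothing new once one is careful, but in fact since $\left(\frac{\cdot}{y}\right)$ and $\left(\frac{\cdot}{x}\right)$ are defined via the odd parts and $x,y$ need not be coprime, the cleanest route is to first reduce to the coprime case by noting both sides of the desired identity and the Hilbert symbol sum depend only on $x,y$ modulo squares, and to strip common prime factors. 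Putting the three parts together, Hilbert reciprocity becomes $\left(\frac{x}{y}\right) + \left(\frac{y}{x}\right) = 0$ in $\mathbb{F}_2$, which is the claim.

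\medskip

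The main obstacle is bookkeeping with the tame symbol at places of even valuation and at places dividing $\gcd(x,y)$: one must be sure that the standard formula $(x,y)_v = \left(\frac{(-1)^{v(x)v(y)} x^{v(y)}/y^{v(x)}}{v}\right)$ is being applied correctly and that the sign $(-1)^{v(x)v(y)}$ and the factors of $-1$ do not leak in, which is exactly where the hypothesis that $x$ (hence also the relevant residues) behaves well at $2$ and $\infty$ gets used to kill any $(-1,\cdot)_v$ or $(2,\cdot)_v$ corrections; alternatively one can sidestep this entirely by reducing to $x,y$ coprime and squarefree away from $2\infty$, where the tame formula is cleanest. Everything else is a routine unwinding of definitions.
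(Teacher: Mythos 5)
Your proof is correct and is exactly the argument the paper has in mind: the paper's own proof is the single line ``Immediate from Hilbert reciprocity,'' and you have simply unpacked that by applying reciprocity to $\psi_x \cup \psi_y$, killing the $2$-adic and archimedean contributions via the hypothesis on $x$, and reading off $\bigl(\tfrac{x}{y}\bigr)$ and $\bigl(\tfrac{y}{x}\bigr)$ from the tame symbol at odd places (after the harmless reduction to $x,y$ coprime and squarefree away from $2$, under which both sides and the lemma itself are invariant).
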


\begin{proof}
Immediate from Hilbert reciprocity.
\end{proof}

\begin{theorem}
\label{tFinal}
There exists an auxiliary twist $\kappa \in K^\ast/K^{\ast 2}$.
\end{theorem}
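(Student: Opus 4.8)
The plan is to build $\kappa$ by a greedy, prime-by-prime construction that walks the Selmer rank down (and rearranges the local valuation data at $w_1,\dots,w_5$) exactly as in \cite[Section 5]{KP}, using the Markov-chain machinery of Subsection \ref{sMarkov}. Concretely, I would start from the base Selmer structure $\mathcal{L}_{0}$ and repeatedly append one auxiliary prime $\mathfrak{p}_{i+1}\notin T$ together with a choice of $\pi_{i+1}$, controlling at each stage both the dimension change $n_i$ via Lemma \ref{lSelmerChange} and the residues of the surviving Selmer classes at $w_1,\dots,w_5$. The Chebotarev density theorem provides, at each step, a prime $\mathfrak{p}_{i+1}$ whose Frobenius realizes any prescribed splitting behaviour in the relevant auxiliary extension (the compositum of the field cut out by $\mathrm{Sel}_{\mathcal{L}_{i,\boldsymbol{\pi}}'}$, the $w_j$-data, and $K(\sqrt{\alpha\beta},\sqrt{\alpha\gamma},i)$); since $\res_{\mathfrak{p}_{i+1}}$ of a Selmer class is determined by such Frobenius data, this is what lets us force the dimension to drop by $2$ or stay fixed as needed, and simultaneously adjust the $w_j$-valuations through the local conditions $\langle(\alpha\beta,\pi_j\alpha),(-\pi_j\alpha,-\alpha\gamma)\rangle$.

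The key bookkeeping device is $\mathrm{Diag}(i,\boldsymbol{\pi})$: one tracks the three "diagonal" subspaces $V_1,V_2,V_3$ of $\mathrm{Sel}_{\mathcal{L}_{i,\boldsymbol\pi}^{\mathrm{ray}}}$ consisting of classes of shape $(x,1)$, $(1,x)$, $(x,x)$, because the local images $\delta(E(K_{w_j}))$ computed in \eqref{eDeltaw1}--\eqref{eDeltaw5} lie in exactly these shapes (for $w_1,w_5$ the shape $(x,x)$, for $w_2,w_4$ the shape $(x,1)$, for $w_3$ the shape $(1,x)$). The $2$-genericity hypothesis, which furnished the five primes $w_1,\dots,w_5$ with the prescribed ramification of $\alpha,\beta,\gamma$ and the congruences $\lambda_i\equiv 1\bmod 8O_K$, is precisely what guarantees these local spaces have the stated form and that the target table \eqref{eInfMat1} is achievable. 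I would run the walk until the Selmer group has dimension $5$ with a basis whose valuation matrix at $(w_1,\dots,w_5)$ is \eqref{eInfMat1}; verifying $(K1)$ is automatic from the construction (we only ever use primes outside $T$, keep $\psi_\kappa$ locally trivial at $T$, and can demand unramifiedness at places dividing $\alpha\beta\gamma$ by including them in $T$ from the start).

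The main obstacle is the simultaneous control: we need both the rank to land at $5$ (not higher, not lower) and the ten basis elements to have the specific valuation pattern of \eqref{eInfMat1}, and these two demands interact because appending a prime changes the Selmer group as a whole. The technical resolution—following \cite{KP} but in the simplified form promised in the introduction, bypassing the explicit parametrization of $H^1(G_K,\mathbb{F}_2)$—is a linear-algebra argument: at each step the set of achievable $(\res_{w_1},\dots,\res_{w_5})$-patterns of the new Selmer group, as the Frobenius of $\mathfrak{p}_{i+1}$ ranges over its allowed Chebotarev classes, is an affine subspace that is large enough to steer the configuration toward the target; one shows the relevant "swapping" moves (exchanging a quadratic symbol at $w_j$ against a symbol at $\mathfrak{p}_{i+1}$, governed by Hilbert reciprocity as in Lemma \ref{lFlip}) generate enough of the configuration space. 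Once the walk terminates in the desired state, reading off $\kappa := \mathfrak{p}_1\cdots\mathfrak{p}_s$ (with the chosen $\boldsymbol\pi$) gives an auxiliary twist, completing the proof.
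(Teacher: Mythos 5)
Your plan correctly identifies the ambient machinery (the Markov chain from Subsection \ref{sMarkov}, Chebotarev realisation of Frobenius, the role of $\mathrm{Diag}$ and the spaces $V_1, V_2, V_3$), but there is a genuine gap at the step you label the ``main obstacle.'' You assert that ``the set of achievable $(\res_{w_1},\dots,\res_{w_5})$-patterns of the new Selmer group, as the Frobenius of $\mathfrak{p}_{i+1}$ ranges over its allowed Chebotarev classes, is an affine subspace that is large enough to steer the configuration toward the target.'' This is not established, and it is not the mechanism the paper uses. Steering by choosing Frobenius cannot on its own produce Selmer classes with the prescribed \emph{odd} valuations at $w_1,\dots,w_5$: the entries of table \eqref{eInfMat1} are parities of valuations at the $w_j$, and a Selmer class supported on auxiliary primes outside $T$ is automatically of even valuation at every $w_j$. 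No amount of Chebotarev steering of auxiliary primes $\mathfrak{p}_{i+1}\notin T$ changes that. To get odd valuation at $w_j$ you must explicitly multiply a uniformizer $\lambda_j$ of $w_j$ into the class.

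This is precisely what the paper's proof does, and it is the step your plan omits. The paper first drives $\dim \mathrm{Sel}_{\mathcal{L}}$ down to $1$ (with the auxiliary constraints $\mathrm{Diag}\leq 1$ and $V_2 = 0$), but then does something qualitatively different from a steering walk: it introduces six totally positive principal primes $p_{s-16},\dots,p_{s-11}$ chosen in conditions $(C1)$--$(C3)$ so that each $\lambda_j p_{s-17+j}$ is a square locally everywhere in $T\setminus\{w_j\}$ and at the old ramified primes, deliberately making them \emph{almost but not quite} Selmer elements; it brings the rank back to $1$ using $\mathfrak{p}_{s-10},\dots,\mathfrak{p}_{s-5}$ (here is where $\mathrm{proj}$, its injectivity, and the $\mathrm{Diag}$ bound actually enter, not in the way you describe); and then it introduces three further primes $p_{s-4},p_{s-3},p_{s-2}$ so that the \emph{products} $\lambda_j p_{s-17+j} p_{s-\delta(j)}$ become genuine Selmer classes, whose valuations at $w_1,\dots,w_5$ are exactly those of table \eqref{eInfMat1} precisely because the $\lambda_j$ is built in. Two final primes $\mathfrak{p}_{s-1},\mathfrak{p}_s$ trim the dimension to $5$ and make the product principal. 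Without the explicit $\lambda_j$-factor construction (and the extra place $w_6$, which you do not mention and which is needed to engineer a seventh class $(1,\lambda_6 p_{s-11}p_{s-2})$ to kill against $(a,b)$ in the $\mathfrak{p}_{s-1}$ step), there is no route to the target table, so the proposal as written would not close.
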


\begin{proof}
We will construct a sequence of prime ideals $\mathfrak{p}_1, \ldots, \mathfrak{p}_s$ for some sufficiently large integer $s \geq 1$, and an associated sequence of local uniformizers $\pi_1 , \dots, \pi_s$. Once these choices have been made, we will declare our auxiliary twist $\kappa$, and then show that it satisfies properties $(K1)$ and $(K2)$. We will now broadly overview the various steps.

In the first part, we make $\mathrm{Sel}_{\mathcal{L}_{i, \boldsymbol{\pi}}}(G_K , E[2])$ as small as possible. Concretely, this means
$$
\dim_{\FF_2} \mathrm{Sel}_{\mathcal{L}_{s - 17, \boldsymbol{\pi}}}(G_K , E[2]) = 1. 
$$
Unfortunately, we shall need even more from this space in our later arguments, and for this reason we also need control on several auxiliary spaces. Once we have obtained this control, we will use $17$ more prime ideals before declaring $\kappa$.

In the second part, we introduce six convenient auxiliary prime elements. These prime elements are set up so that they satisfy the local conditions in the matrix \eqref{eInfMat1} but are purposely chosen to be almost but not quite Selmer elements. Nevertheless, our Selmer group $\mathrm{Sel}_{\mathcal{L}_{s - 17, \boldsymbol{\pi}}}(G_K , E[2])$ may grow during this construction to a space $\mathrm{Sel}_{\mathcal{L}_{s - 11, \boldsymbol{\pi}}}(G_K , E[2])$ of dimension at most $13$. In the third part, we use six prime ideals and the control on our auxiliary spaces to remove any additional Selmer elements introduced by the six auxiliary prime elements. In particular, we ensure that
$$
\dim_{\FF_2} \mathrm{Sel}_{\mathcal{L}_{s - 5, \boldsymbol{\pi}}}(G_K , E[2]) = 1. 
$$
In the fourth part, we now introduce three further prime elements. These prime elements are set up precisely so that they give six linearly independent Selmer elements once combined appropriately with the previous six prime elements (thus ensuring $\dim_{\FF_2} \mathrm{Sel}_{\mathcal{L}_{s - 2, \boldsymbol{\pi}}}(G_K , E[2]) = 7$) and moreover satisfy the conditions in \eqref{eInfMat1}. In the fifth part, we check that these elements are indeed Selmer elements by a careful examination of local conditions. In the sixth part, we introduce two more prime ideals and declare $\kappa$. The first prime ideal removes the extra Selmer classes (in order to obtain a $5$-dimensional space), while the second prime ideal principalizes the product of all prime ideals so that we can declare $\kappa$. Finally, we check $(K1)$ and $(K2)$ in the seventh part.

\subsubsection*{Decreasing ranks}
Define $S$ to be the set of places $T$ together with all places dividing $\alpha \beta \gamma$. To start, we choose any sequence of prime ideals $\mathfrak{p}_1, \ldots, \mathfrak{p}_{i_0} \not \in S$ and any sequence of local uniformizers $\pi_1, \dots, \pi_{i_0}$ such that the Frobenius elements $\Frob_{\mathfrak{p}_1}, \dots, \Frob_{\mathfrak{p}_{i_0}}$ generate the Galois group $\Gal(K(S)/K)$, where $K(S)$ denotes the largest multiquadratic extension of $K$ that is unramified outside of $S$. Then, by examining the local conditions at the primes in $\mathfrak{p}_1, \dots, \mathfrak{p}_{i_0}$, we have for all $i \geq i_0$ the implication
\begin{equation}
\label{eStartingRay}
(y_1 , y_2) \in \mathrm{Sel}_{\mathcal{L}_{i, \boldsymbol{\pi}}}(G_K , E[2]) \text{ and } y_1, y_2 \text{ unramified outside } S \Longrightarrow y_1 = y_2 = 1
\end{equation}
We will use throughout that the dimension of $\mathrm{Sel}_{\mathcal{L}_{i, \boldsymbol{\pi}}}(G_K, E[2])$ is odd; this follows upon combining equation \eqref{eNegRoot}, Lemma \ref{lRootNumber} and Lemma \ref{lSelmerChange} with $\mathrm{Sel}_{\mathcal{L}_{0, \boldsymbol{\pi}}}(G_K, E[2]) = \mathrm{Sel}^2(E/K)$.

We claim that there exists an integer $r' \geq 1$, a sequence of prime ideals $\mathfrak{p}_1, \ldots, \mathfrak{p}_{r'} \not \in S$ and local uniformizers $\pi_1, \ldots, \pi_{r'}$, with the following properties
\begin{align}
&\dim_{\FF_2} \mathrm{Sel}_{\mathcal{L}_{r', \boldsymbol{\pi}}}(G_K, E[2]) = 1, \label{edimension1} \\
&\text{Diag}(r', \boldsymbol{\pi}) \leq 1, \label{eRaygeneric}  \\
&V_2(r', \boldsymbol{\pi}) = 0. \label{eBasisSwap} 
\end{align}
We shall prove this in three steps, which we now overview. We will first construct $r_1$ such that equation \eqref{edimension1} holds. We will then preserve the validity of equation \eqref{edimension1} during our construction of an integer $r_2 \geq r_1$ with $\text{Diag}(r_2, \boldsymbol{\pi}) \leq 1$. We will then perform at most one additional step in case equation \eqref{eBasisSwap} fails: in this step we will ensure that \eqref{eBasisSwap} holds while preserving equations \eqref{edimension1} and \eqref{eRaygeneric}.

Let us begin by constructing $r_1$. To this end, we claim that there exists an integer $r_1 \geq 1$, a sequence of prime ideals $\mathfrak{p}_1, \ldots, \mathfrak{p}_{r_1}$ and local uniformizers $\pi_1, \ldots, \pi_{r_1}$ such that equation \eqref{edimension1} holds. This claim can be proven by modifying the argument ``Part 1: Reducing Selmer ranks'' on \cite[p.~23-24]{KP} as follows: the right hand side of $(S1)$ becomes $1$, even becomes odd on top of page $24$ and \cite[Corollary 3.7]{KP} is replaced by the fact that $E$ is $2$-generic. Indeed, since $E$ is $2$-generic, it is readily verified that $\alpha\beta, -\alpha\gamma, \beta\gamma$ are non-squares. With these changes, the proof on \cite[p.~23-24]{KP} can be followed verbatim. 

We shall now construct $r_2 \geq r_1$ in such a way that equation \eqref{eRaygeneric} holds, while preserving the truth of equation \eqref{edimension1}. To this end, we will explain how given a choice of prime ideals $\mathfrak{p}_1 , \dots , \mathfrak{p}_i$ and local uniformizers $\pi_1, \dots, \pi_i$ such that 
$$
\text{Diag}(i, \boldsymbol{\pi}) \geq 2, \quad \quad \dim_{\FF_2} \mathrm{Sel}_{\mathcal{L}_{i, \boldsymbol{\pi}}}(G_K, E[2]) = 1,
$$
it is always possible to choose two new prime ideals $\mathfrak{p}_{i + 1}, \mathfrak{p}_{i + 2}$ and local uniformizers $\pi_{i + 1}, \pi_{i + 2}$ in such a way that for at least one value of $j \in \{i + 1, i + 2\}$ we have that
\begin{align} 
&\dim_{\FF_2} \mathrm{Sel}_{\mathcal{L}_{j, \boldsymbol{\pi}}}(G_K, E[2]) = \dim_{\FF_2} \mathrm{Sel}_{\mathcal{L}_{i, \boldsymbol{\pi}}}(G_K, E[2]), \label{epreservingSelmer} \\
&\text{Diag}(j , \boldsymbol{\pi}) \leq \text{Diag}(i, \boldsymbol{\pi}) - 1. \label{eDecreaseDiag}
\end{align}
Once we have obtained this, we can iteratively apply this in order to find some integer $r_2 \leq r_1 + 2 \cdot \text{Diag}(r_1 , \boldsymbol{\pi})$ satisfying simultaneously equation \eqref{edimension1} and equation \eqref{eRaygeneric}. 

So we are left with proving the existence of prime ideals $\mathfrak{p}_{i + 1}, \mathfrak{p}_{i + 2}$ and local uniformizers $\pi_{i + 1}, \pi_{i + 2}$ satisfying equation \eqref{epreservingSelmer} and equation \eqref{eDecreaseDiag}. We distinguish two cases. Suppose first that at least two of the spaces $V_1(i, \boldsymbol{\pi}), V_2(i, \boldsymbol{\pi}), V_3(i, \boldsymbol{\pi})$ are non-zero, say $V_1(i, \boldsymbol{\pi}) \neq 0$ and $V_2(i, \boldsymbol{\pi}) \neq 0$ (the other cases can be treated similarly). Fix some non-trivial elements $(c , 1) \in V_1(i, \boldsymbol{\pi})$ and $(1 , d) \in V_2(i, \boldsymbol{\pi})$. In view of our choice of $\mathfrak{p}_1, \ldots, \mathfrak{p}_{i_0}$, see equation \eqref{eStartingRay}, the elements $c$ and $d$ are linearly independent from the span of $\{\alpha\beta, -\alpha\gamma, \beta\gamma\}$. Since $E$ is $2$-generic, the three elements $\alpha \beta$, $-\alpha \gamma$ and $\beta \gamma$ span a space of dimension at least $2$. Therefore there are two distinct elements $x, y \in \{\alpha\beta, -\alpha\gamma, \beta\gamma\}$ that are both different from $cd$. Observe that the only possible non-trivial linear relations between $c , d, x , y$ are then $cd = xy$ or $cd = 1$, but not both.

Therefore there exists a prime ideal $\mathfrak{p}_{i + 1}$ such that 
$$
\left(\left(\frac{c}{\mathfrak{p}_{i + 1}}\right), \left(\frac{d}{\mathfrak{p}_{i + 1}}\right), \left(\frac{x}{\mathfrak{p}_{i + 1}}\right), \left(\frac{y}{\mathfrak{p}_{i + 1}}\right)\right) = (-1, -1, -1, -1).
$$
Since 
$$
\dim_{\FF_2} \mathrm{Sel}_{\mathcal{L}_{i, \boldsymbol{\pi}}}(G_K, E[2]) = 1,
$$
we are always either in the first or the third case of Lemma \ref{lSelmerChange}. Hence we can always choose $\pi_{i + 1}$ in such a way that we are in the third case. This certainly enforces equation \eqref{epreservingSelmer}. It remains to show that equation \eqref{eDecreaseDiag} holds with these choices.

We will now distinguish three possibilities. Suppose that $\{x, y\} = \{\alpha\beta, -\alpha\gamma\}$. Then we notice that 
$$
V_1(i + 1 , \boldsymbol{\pi}) \subsetneq V_1(i, \boldsymbol{\pi}), \quad V_2(i + 1, \boldsymbol{\pi}) \subsetneq V_2(i, \boldsymbol{\pi}), \quad \dim_{\FF_2} \frac{V_3(i + 1, \boldsymbol{\pi})}{V_3(i +1 , \boldsymbol{\pi}) \cap V_3(i, \boldsymbol{\pi})} \leq 1.
$$
Therefore we have
$$
\text{Diag}(i + 1, \boldsymbol{\pi}) - \text{Diag}(i, \boldsymbol{\pi}) \leq -1-1+1=-1.
$$
Suppose now that $\{x, y\} = \{\alpha\beta, \beta\gamma\}$. Then we observe that 
$$
\dim_{\FF_2} \frac{V_1(i + 1, \boldsymbol{\pi})}{V_1(i + 1, \boldsymbol{\pi}) \cap V_1(i, \boldsymbol{\pi})} \leq 1, \quad V_2(i + 1, \boldsymbol{\pi}) \subsetneq V_2(i, \boldsymbol{\pi}), \quad V_3(i + 1, \boldsymbol{\pi}) \subseteq V_3(i, \boldsymbol{\pi})
$$
and moreover $V_1(i + 1 ,\boldsymbol{\pi}) \cap V_1(i, \boldsymbol{\pi}) \subsetneq V_1(i, \boldsymbol{\pi})$. Hence we conclude that
$$
\text{Diag}(i + 1, \boldsymbol{\pi}) - \text{Diag}(i, \boldsymbol{\pi}) \leq (1 - 1) -1 + 0 = 0 - 1 + 0 = -1.
$$
The remaining case $\{x, y\} = \{-\alpha\gamma, \beta\gamma\}$ is identical to the previous one upon permuting the roles of $1$ and $2$. This shows that equation \eqref{eDecreaseDiag} holds in this case. 

We now approach the second case, which is that $\text{Diag}(i, \boldsymbol{\pi}) \geq 2$ and precisely one of the three spaces $V_1(i, \boldsymbol{\pi}) , V_2(i, \boldsymbol{\pi}) , V_3(i, \boldsymbol{\pi})$ is non-zero. We will consider the case $V_1(i, \boldsymbol{\pi}) \neq 0$, the other cases being symmetrical. Let $(c, 1)$ be a non-zero element of this space. Since $c$ is linearly independent from $\{\alpha\beta, -\alpha\gamma, \beta\gamma\}$ by equation \eqref{eStartingRay}, and since any two distinct elements from $\{\alpha\beta, -\alpha\gamma, \beta\gamma\}$ are linearly independent, there exists a prime ideal $\mathfrak{p}_{i + 1}$ such that $c$, $\alpha\beta$ and $-\alpha\gamma$ are all non-squares modulo $\mathfrak{p}_{i + 1}$. Because
$$
\dim_{\FF_2} \mathrm{Sel}_{\mathcal{L}_{i, \boldsymbol{\pi}}}(G_K, E[2]) = 1,
$$
we are either in the first or the third case of Lemma \ref{lSelmerChange}. Hence we can always choose $\pi_{i + 1}$ in such a way that we are in the third case. This certainly enforces equation \eqref{epreservingSelmer}. We now claim that
$$
\text{Diag}(i + 1, \boldsymbol{\pi}) \leq \text{Diag}(i, \boldsymbol{\pi})
$$
and that if we have equality, then at least two of the spaces $V_1(i + 1, \boldsymbol{\pi}), V_2(i + 1, \boldsymbol{\pi}), V_3(i + 1, \boldsymbol{\pi})$ are non-zero. The claim implies equation \eqref{eDecreaseDiag}, since we are either done immediately (so $j := i + 1$) or we can apply the argument from the first case, in which case we take $j := i + 2$.

To prove the claim, observe that our choice of $\mathfrak{p}_{i + 1}$ implies that 
$$
V_1(i + 1 , \boldsymbol{\pi}) \subsetneq V_1(i, \boldsymbol{\pi}), \quad V_2(i + 1, \boldsymbol{\pi}) \subseteq V_2(i, \boldsymbol{\pi}), \quad \dim_{\FF_2} \frac{V_3(i + 1, \boldsymbol{\pi})}{V_3(i + 1, \boldsymbol{\pi}) \cap V_3(i, \boldsymbol{\pi})} \leq 1. 
$$
This already shows that
$$
\text{Diag}(i + 1 , \boldsymbol{\pi})-\text{Diag}(i, \boldsymbol{\pi}) \leq -1 + 0 + 1 = 0.
$$
Moreover, the only way to reach equality is that $V_3(i + 1 , \boldsymbol{\pi})$ has positive dimension. Since $\dim_{\FF_2} V_1(i, \boldsymbol{\pi}) \geq 2$ by assumption, we also know that $V_1(i + 1 , \boldsymbol{\pi})$, which is of codimension $1$ in $V_1(i, \boldsymbol{\pi})$, satisfies $\dim_{\FF_2} V_1(i + 1, \boldsymbol{\pi}) \geq 1$. This gives precisely the desired conclusion. 

We have so far constructed $\mathfrak{p}_1, \dots ,\mathfrak{p}_{r_2}$ and local uniformizers $\pi_1, \dots, \pi_{r_2}$ in such a way that equation \eqref{edimension1} and \eqref{eRaygeneric} hold. In order to enforce also equation \eqref{eBasisSwap}, we distinguish two cases. Suppose first that $V_2(r_2, \boldsymbol{\pi}) = 0$. Then we declare $r' := r_2$, and the equations \eqref{edimension1}, \eqref{eRaygeneric} and \eqref{eBasisSwap} all hold.

Suppose instead that $V_2(r_2, \boldsymbol{\pi}) \neq 0$. In virtue of equation \eqref{eRaygeneric}, it must be the case that $\dim_{\mathbb{F}_2} V_2(r_2 , \boldsymbol{\pi}) = 1$, while both $V_1(r_2, \boldsymbol{\pi})$ and $V_3(r_2, \boldsymbol{\pi})$ are trivial. Let $(1, z)$ be the unique non-trivial element of $V_2(r_2, \boldsymbol{\pi})$. From \eqref{eStartingRay} and genericity, we see that $z$, $\alpha\beta$ and $\beta\gamma$ are linearly independent. Thus there exists $\mathfrak{p}_{r_2 + 1}$ such that $z$, $\alpha\beta$ and $\beta\gamma$ are non-squares, while, as before, we choose $\pi_{r_2 + 1}$ in such a way that $n_{r_2} = 0$. Thanks to these choices we have that 
$$
\dim_{\FF_2} V_1(r_2 + 1 , \boldsymbol{\pi}) \leq 1, \quad V_2(r_2 + 1, \boldsymbol{\pi}) = 0, \quad V_3(r_2 + 1, \boldsymbol{\pi}) = 0. 
$$
We now take $r' := r_2 + 1$. With the above choices of prime ideals and local uniformizers, equations \eqref{edimension1}, \eqref{eRaygeneric} and \eqref{eBasisSwap} all simultaneously hold as claimed. 

\subsubsection*{Constructing prime elements, part 1}
Let us now fix a choice of $r'$, $\mathfrak{p}_1, \ldots, \mathfrak{p}_{r'}$ and $\pi_1, \ldots, \pi_{r'}$ satisfying equations \eqref{edimension1}, \eqref{eRaygeneric} and \eqref{eBasisSwap}. We also fix a principal prime $w_6 \in T$, generated by $\lambda_6$ (totally positive and $1$ modulo $8$), such that $w_6(\gamma) \equiv 1 \bmod 2$, and moreover $\alpha$ and $\beta$ are invertible squares modulo $w_6$. Such a place exists as $E$ is $2$-generic. We declare $s := r' + 17$, so we need to construct $17$ more prime ideals to define the auxiliary twist $\kappa$.

We will do this by adding to our list $\mathfrak{p}_1, \ldots, \mathfrak{p}_{r'}$ nine prime elements $p_{s - 16}, \dots, p_{s - 11}$ and $p_{s - 4}, p_{s - 3}, p_{s - 2}$ and eight prime ideals $\mathfrak{p}_{s - 10}, \dots ,\mathfrak{p}_{s - 5}$ and $\mathfrak{p}_{s - 1}, \mathfrak{p}_s$. We start our construction by imposing the following demands on the principal primes $p_i$ for $i \in \{s - 16, \dots, s - 11\}$

\begin{enumerate}
\item[$(C1)$] the ideals $\mathfrak{p}_{s - 16} := (p_{s - 16}), \dots, \mathfrak{p}_{s - 11} := (p_{s - 11})$ are distinct prime ideals coprime to $S$ and $\mathfrak{p}_1, \dots, \mathfrak{p}_{s - 17}$;
\item[$(C2)$] for each $i \in \{1, \dots , 6\}$, the elements $\lambda_i p_{s - 17 + i}$ are squares locally at $T - \{w_i\} \cup \{\mathfrak{p}_1, \dots , \mathfrak{p}_{s - 17}\}$, and $p_{s - 17 + i}$ is a square locally at $w_i$. Furthermore, for each $s - 16 \leq i < j \leq s - 11$ we have
$$
\left(\frac{\lambda_{j - s + 17} p_j}{p_i}\right) = 
\begin{cases}
-1 &\text{if } j = i + 1 \text{ and } i \in \{s - 16, s - 14, s - 12\} \\
1 &\text{otherwise};
\end{cases}
$$
\item[$(C3)$] we pick local uniformizers $\pi_{s - 16}, \dots, \pi_{s - 11}$ such that for all $i \in \{s - 16, \dots, s - 11\}$
$$
\pi_i = p_i \epsilon_i,
$$
where $\epsilon_i$ is a quadratic non-residue.
\end{enumerate}

\noindent It is possible to find such prime elements by repeatedly applying Mitsui's prime ideal theorem. Before we proceed with the construction of the prime ideals $\mathfrak{p}_{s - 10}, \dots, \mathfrak{p}_{s - 5}$, we quantify in a precise sense how $\lambda_1 p_{s - 16}, \dots, \lambda_6 p_{s - 11}$ do not give rise to Selmer elements. Define
$$
V := \{(x_1, x_2) \in (K^{\ast}/K^{\ast 2})^2 : x_1, x_2 \in \langle \lambda_1 p_{s - 16}, \dots , \lambda_6p_{s - 11} \rangle\}. 
$$
We claim that 
\begin{equation}
\label{eNoSelinp}
\mathrm{Sel}_{\mathcal{L}_{s - 11, \boldsymbol{\pi}}}(G_K , E[2]) \cap V = \{(1,1)\}.
\end{equation}
Note that every $(x_1, x_2) \in V$ has the shape
$$
(x_1, x_2) = \left((\lambda_1 p_{s - 16})^{\delta_1} \cdots (\lambda_6 p_{s - 11})^{\delta_6}, (\lambda_1 p_{s - 16})^{\delta_1'} \cdots (\lambda_6 p_{s - 11})^{\delta_6'}\right).
$$
Suppose now that $(x_1 , x_2) \in \mathrm{Sel}_{\mathcal{L}_{s - 11 , \boldsymbol{\pi}}}(G_K , E[2])$. Using the local conditions at $w_1, \dots, w_6$ (see also \eqref{eDeltaw1}, \eqref{eDeltaw2}, \eqref{eDeltaw3}, \eqref{eDeltaw4} and \eqref{eDeltaw5}), we deduce immediately the following constraints
$$
\delta_1 = \delta_1', \quad \delta_2' = 0, \quad \delta_3 = 0, \quad \delta_4' = 0, \quad \delta_5 = \delta_5', \quad \delta_6 = 0. 
$$
Hence we can rewrite our element $(x_1, x_2)$ as
\begin{align}
\label{ex1x2Shape}
x_1 &= (\lambda_1 p_{s - 16})^{\delta_1} (\lambda_2 p_{s - 15})^{\delta_2} (\lambda_4 p_{s - 13})^{\delta_4} (\lambda_5 p_{s - 12})^{\delta_5}, \nonumber \\
x_2 &= (\lambda_1 p_{s - 16})^{\delta_1} (\lambda_3 p_{s - 14})^{\delta_3'} (\lambda_5 p_{s - 12})^{\delta_5} (\lambda_6 p_{s - 11})^{\delta_6'}.
\end{align}
We make the subclaim that
\begin{align}
\label{eLocalSubclaim}
\{(x, x) \in \mathcal{L}_{s - 11, \boldsymbol{\pi}, p_{s - 16}}\} = \langle (\epsilon_{s - 16} p_{s - 16}, \epsilon_{s - 16} p_{s - 16}) \rangle.
\end{align}
To this end, first recall the choice of $\pi_{s - 16}$ in $(C3)$. Therefore it suffices to show that $-1$, $\beta$ and $\gamma$ are squares locally at $p_{s - 16}$. Since $p_{s - 16}$ is trivial locally at every $2$-adic and infinite place, we see that $-1$ is certainly a square locally at $p_{s - 16}$ by Hilbert reciprocity applied to the symbol $(-1, p_i)$. Using once more that $p_{s - 16}$ is trivial locally at every $2$-adic and infinite place, Hilbert reciprocity applied to respectively the symbols $(p_{s - 16}, \beta)$ and $(p_{s - 16}, \gamma)$ yields 
$$
\left(\frac{\beta}{p_{s - 16}}\right) = \prod_{\substack{v \in T \ \text{odd} \\ v(\beta) \equiv 1 \bmod 2}} (p_{s - 16}, \beta)_v = \prod_{\substack{v \in T \ \text{odd} \\ v(\beta) \equiv 1 \bmod 2}} (\lambda_1, \beta)_v,
$$
and
$$
\left(\frac{\gamma}{p_{s - 16}}\right) = \prod_{\substack{v \in T \ \text{odd} \\ v(\gamma) \equiv 1 \bmod 2}} (p_{s - 16}, \gamma)_v = \prod_{\substack{v \in T \ \text{odd} \\ v(\gamma) \equiv 1 \bmod 2}} (\lambda_1, \gamma)_v,
$$
where we have invoked the first part of $(C2)$ to rewrite the products. Recalling that $\lambda_1$ itself is totally positive and $1$ modulo $8$, another application of Hilbert reciprocity yields
$$
\prod_{\substack{v \in T \ \text{odd} \\ v(\beta) \equiv 1 \bmod 2}} (\lambda_1, \beta)_v = (\lambda_1, \beta)_{w_1} = 1 \quad \quad \text{ and } \quad \quad \prod_{\substack{v \in T \ \text{odd} \\ v(\gamma) \equiv 1 \bmod 2}} (\lambda_1, \gamma)_v = (\lambda_1, \gamma)_{w_1} = 1,
$$
where the last identity is by construction of the place $w_1$. This proves the subclaim \eqref{eLocalSubclaim}.

Having proven the subclaim \eqref{eLocalSubclaim}, we return to the proof of our claim \eqref{eNoSelinp}. We start by observing that
\begin{align}
\label{eSquaress16}
\lambda_3 p_{s - 14}, \quad \quad \lambda_4 p_{s - 13}, \quad \quad \lambda_5 p_{s - 12}, \quad \quad \lambda_6 p_{s - 11}
\end{align} 
are squares locally at $(p_{s - 16})$ thanks to $(C2)$. Moreover, $p_{s - 16}$ is a square locally at $\lambda_1$ by $(C2)$, and therefore $\lambda_1$ is a square locally at $p_{s - 16}$ by Lemma \ref{lFlip}. Inspecting equation \eqref{ex1x2Shape}, we conclude that $x_2$ becomes simply $p_{s - 16}^{\delta_1}$ locally at $(p_{s - 16})$. Hence the local conditions at $p_{s - 16}$, see equation \eqref{eLocalSubclaim}, forces that $\delta_1 = 0$. 

Arguing similarly for the local conditions imposed on $x_1$ at $p_{s - 15}$ and $p_{s - 13}$ (now using that $-1$, $\alpha$ and $\gamma$ are squares there) and for the local conditions imposed on $x_2$ at $p_{s - 14}$ and $p_{s - 12}$ (now using that $-1$, $\alpha$ and $\beta$ are squares there) gives that $\delta_2 = \delta_4 = \delta_5 = \delta_3' = 0$. Granted that $\delta_5 = 0$, we can now apply again the same argument for the local conditions imposed on $x_2$ at $p_{s - 11}$, and get that $\delta_6' = 0$. This verifies equation \eqref{eNoSelinp}.

\subsubsection*{Projecting the Selmer group}
To aid some of the forthcoming arguments, it will be helpful to introduce a map
$$
\text{proj}: \mathrm{Sel}_{\mathcal{L}_{s - 11, \boldsymbol{\pi}}}(G_K , E[2]) \to \mathrm{Sel}_{\mathcal{L}_{s - 17, \boldsymbol{\pi}}^{\text{ray}}}(G_K , E[2]),
$$
which is defined as follows: given $(a_1 , a_2) \in \mathrm{Sel}_{\mathcal{L}_{s - 11, \boldsymbol{\pi}}}(G_K , E[2])$, we uniquely decompose $a_1 = a_1(1)a_1(2)$ and $a_2 = a_2(1)a_2(2)$, where for each $i \in \{1, 2\}$ we have
$$
a_i(1) := \prod_{j = 1}^6 (\lambda_j p_{s - 17 +j})^{v_{(p_i)}(a_i)},
$$
and then we set $\text{proj}(a_1, a_2) := (a_1(2) , a_2(2))$. We observe that this is indeed an element of $\mathrm{Sel}_{\mathcal{L}_{s - 17, \boldsymbol{\pi}}^{\text{ray}}}(G_K , E[2])$, since each $\lambda_j p_{s - 17 +j}$ is a square locally at $\mathfrak{p}_1, \dots, \mathfrak{p}_{s - 17}$ by $(C2)$. It is evident that $\text{proj}$ is a linear map of $\mathbb{F}_2$-vectorspaces. Observe that this, combined with equation \eqref{eNoSelinp}, immediately implies that the operator $\text{proj}$ is \emph{injective}.

We will now explain how to construct the prime ideals $\mathfrak{p}_{s - 10}, \dots, \mathfrak{p}_{s - 5}$ along with local uniformizers $\pi_{s - 10}, \dots, \pi_{s - 5}$. We will construct said prime ideals and local uniformizers one by one, while always maintaining the two constraints
\begin{equation} 
\label{edonotleaveSel}
\mathrm{Sel}_{\mathcal{L}_{s - 11 + i + 1, \boldsymbol{\pi}}}(G_K , E[2]) \subseteq \mathrm{Sel}_{\mathcal{L}_{s - 11 + i,  \boldsymbol{\pi}}}(G_K , E[2])
\end{equation}
and 
\begin{equation} 
\label{eQuadConsistency}
\left(\frac{\lambda_{h_1} p_{s - 17 + h_1}}{\mathfrak{p}_{s - 11 + h_2}}\right) = 1 \quad \quad \text{ for each } (h_1 , h_2) \in \{1 , \dots, 6\}^2.
\end{equation}
Moreover, we shall ensure that the inclusion \eqref{edonotleaveSel} is strict if $\dim_{\FF_2} \mathrm{Sel}_{\mathcal{L}_{s - 11 + i,  \boldsymbol{\pi}}}(G_K , E[2]) \geq 3$. In order to explain the construction of $\mathfrak{p}_{s - 11 + i + 1}$ and $\pi_{s - 11 + i + 1}$, we distinguish two cases:

\begin{enumerate}
\item[\textbf{Case 1:}] Suppose first that $\mathrm{Sel}_{\mathcal{L}_{s - 11 +i , \boldsymbol{\pi}}}(G_K , E[2])$ is $1$-dimensional. Let $(x_1, x_2)$ be the unique non-trivial element of this space. In this case, we choose any prime ideal $\mathfrak{p}_{s - 11 + i + 1}$ such that the elements $\lambda_1 p_{s - 16}, \dots, \lambda_6 p_{s - 11}$ and $x_1, x_2$ are all squares locally at $\mathfrak{p}_{s - 11 + i + 1}$. Then we are either in the first or the third case of Lemma \ref{lSelmerChange}. Hence we can always choose $\pi_{s - 11 + i + 1}$ in such a way that we are in the third case. This will yield equality in equation \eqref{edonotleaveSel} (observe that $(x_1 , x_2)$ satisfies the local conditions at $\mathfrak{p}_{s - 11 + i + 1}$ by construction), and moreover we have enforced equation \eqref{eQuadConsistency} by construction. 

\item[\textbf{Case 2:}] Suppose that $\mathrm{Sel}_{\mathcal{L}_{s - 11 + i, \boldsymbol{\pi}}}(G_K , E[2])$ is not $1$-dimensional, so it is at least $3$-dimensional. Pick any two linearly independent elements $v_1$ and $v_2$ of $\mathrm{Sel}_{\mathcal{L}_{s - 11 + i, \boldsymbol{\pi}}}(G_K , E[2])$. Since the operator $\text{proj}$ is injective, we conclude that $\text{proj}(v_1)$ and $\text{proj}(v_2)$ are also linearly independent. Hence, in view of equation \eqref{eRaygeneric} and \cite[Lemma 5.8]{KP}, we know that there exists a prime $\mathfrak{q}$ with the property that the two vectors 
$$
\text{proj}(v_1)(\Frob_{\mathfrak{q}}), \quad \quad \text{proj}(v_2)(\Frob_{\mathfrak{q}})
$$ 
are linearly independent vectors in $\mathbb{F}_2^2$, where we view $\text{proj}(v_1)$ and $\text{proj}(v_2)$ as elements of $H^1(G_K, \mathbb{F}_2^2)$ via Kummer theory. In order to choose $\mathfrak{p}_{s - 11 + i + 1}$, we need one final modification of the prime $\mathfrak{q}$, which we explain next. By comparing ramification loci, we see that the coordinates of $\text{proj}(v_1)$ and $\text{proj}(v_2)$ span a space which intersects the span of $\lambda_1 p_{s - 16}, \dots, \lambda_6 p_{s - 11}$ trivially. Therefore there exists a prime ideal $\mathfrak{p}_{s - 11 + i + 1}$ such that 
$$
\text{proj}(v_1)(\Frob_{\mathfrak{q}}) = \text{proj}(v_1)(\Frob_{\mathfrak{p}_{s - 11 + i + 1}}), \quad \text{proj}(v_2)(\Frob_{\mathfrak{q}}) = \text{proj}(v_2)(\Frob_{\mathfrak{p}_{s - 11 + i + 1}})
$$
and with the additional requirement that all of the elements $\lambda_1 p_{s - 16}, \dots, \lambda_6 p_{s - 11}$ are squares modulo $\mathfrak{p}_{s - 11 + i + 1}$. We also pick any local uniformizer $\pi_{s - 11 + i +1}$. Now, by construction, we are in the second case of Lemma \ref{lSelmerChange}, which in particular implies that equation \eqref{edonotleaveSel} holds with the inclusion being strict. 
\end{enumerate}

\noindent Having completed the construction of $\mathfrak{p}_{s - 10}, \dots, \mathfrak{p}_{s - 5}$ and $\pi_{s - 10}, \dots, \pi_{s - 5}$, it is now time to describe $\mathrm{Sel}_{\mathcal{L}_{s - 5}, \boldsymbol{\pi}}(G_K , E[2])$. We claim that this space is a $1$-dimensional subspace of $\mathrm{Sel}_{\mathcal{L}_{s - 11, \boldsymbol{\pi}}}(G_K , E[2])$. Recall that $\mathrm{Sel}_{\mathcal{L}_{s - 17 , \boldsymbol{\pi}}}(G_K , E[2])$ is $1$-dimensional by equation \eqref{edimension1}. It follows from Lemma \ref{lSelmerChange} that 
$$
\dim_{\FF_2} \mathrm{Sel}_{\mathcal{L}_{s - 11, \boldsymbol{\pi}}}(G_K , E[2]) \leq 13, \quad \quad \dim_{\FF_2} \mathrm{Sel}_{\mathcal{L}_{s - 11, \boldsymbol{\pi}}}(G_K , E[2]) \equiv 1 \bmod 2.
$$
Therefore the above procedure clearly gives
$$
\dim_{\FF_2} \mathrm{Sel}_{\mathcal{L}_{s - 5, \boldsymbol{\pi}}}(G_K , E[2]) = 1, \quad \quad \mathrm{Sel}_{\mathcal{L}_{s - 5, \boldsymbol{\pi}}}(G_K , E[2]) \subseteq \mathrm{Sel}_{\mathcal{L}_{s - 11, \boldsymbol{\pi}}}(G_K , E[2]).
$$
Denote by $(a, b)$ the unique non-zero element of $\mathrm{Sel}_{\mathcal{L}_{s - 5, \boldsymbol{\pi}}}(G_K , E[2])$. Because of the containment $\mathrm{Sel}_{\mathcal{L}_{s - 5, \boldsymbol{\pi}}}(G_K , E[2]) \subseteq \mathrm{Sel}_{\mathcal{L}_{s - 11, \boldsymbol{\pi}}}(G_K , E[2])$ and the injectivity of $\text{proj}$, we know that $\text{proj}(a , b) = (v, w)$ is a non-trivial element of $\mathrm{Sel}_{\mathcal{L}_{s - 17 , \boldsymbol{\pi}}^{\text{ray}}}(G_K , E[2])$. Hence thanks to equation \eqref{eBasisSwap}, we know that $v \neq 1$. All in all, we have obtained that
$$
\mathrm{Sel}_{\mathcal{L}_{s - 5 , \boldsymbol{\pi}}}(G_K , E[2]) = \langle (a, b) \rangle, 
$$
where $(v, w) = \text{proj}(a, b)$ satisfies
\begin{equation}
\label{eNontrivialfirstcoordinate}
v \neq 1.    
\end{equation}

\subsubsection*{Constructing prime elements, part 2}
We are now ready to list our next three principal prime ideals. 
\begin{enumerate}
\item[$(C4)$] the ideals $ \mathfrak{p}_{s - 4} := (p_{s - 4}), \mathfrak{p}_{s - 3} := (p_{s - 3}), \mathfrak{p}_{s - 2} := (p_{s - 2})$ are distinct prime ideals coprime to $S$ and $\mathfrak{p}_1, \dots, \mathfrak{p}_{s - 5}$;
\item[$(C5)$] the prime elements $p_{s - 4}, p_{s - 3}, p_{s - 2}$ are squares locally at each place of $T$ and at each place in $\{\mathfrak{p}_1, \dots, \mathfrak{p}_{s - 17}\} \cup \{\mathfrak{p}_{s - 10}, \dots, \mathfrak{p}_{s - 5}\}$;
\item[$(C6)$] for each $s - 16 \leq i \leq s - 11$ and each $s - 4 \leq j \leq s - 2$ we have that
$$
\left(\frac{p_j}{p_i}\right) = -1 \Longleftrightarrow (i, j) \in \left\{ \begin{array}{l} (s - 16, s - 4), (s - 15, s - 4), (s - 14, s - 3), \\ (s - 13, s - 3), (s - 12, s - 2), (s - 11, s - 2) \end{array} \right\};
$$
\item[$(C7)$] for each $s - 4 \leq i < j \leq s - 2$, we have that
$$
\left(\frac{p_j}{p_i}\right) = 1.
$$
\end{enumerate}

\noindent It is clearly possible to find such prime elements by repeatedly applying Mitsui's prime ideal theorem. We next pick
\begin{enumerate}
\item[$(C8)$] local uniformizers $\pi_{s - 4}, \pi_{s - 3}, \pi_{s - 2}$ such that for all $i \in \{s - 4, s - 3, s - 2\}$
$$
\pi_i = p_i \epsilon_i,
$$
where $\epsilon_i$ is a quadratic non-residue.
\end{enumerate}

\noindent We claim that with this choice of primes and uniformizers, we have 
\begin{align}
\label{eniChange}
n_{s - 5}= n_{s - 4} = n_{s - 3} = 2. 
\end{align}
Furthermore, we claim that a basis for $\mathrm{Sel}_{\mathcal{L}_{s - 2, \boldsymbol{\pi}}}(G_K, E[2])$ is
\begin{align}
\{&(a, b), (\lambda_1 p_{s - 16}p_{s - 4}, \lambda_1 p_{s - 16}p_{s - 4}), (\lambda_2 p_{s - 15}p_{s - 4}, 1), (1, \lambda_3 p_{s - 14}p_{s - 3}), \nonumber \\
&(\lambda_4 p_{s - 13}p_{s - 3}, 1), (\lambda_5 p_{s - 12}p_{s - 2}, \lambda_5 p_{s - 12}p_{s - 2}), (1, \lambda_6p_{s - 11}p_{s - 2})\}. \label{eAlmostBasis}
\end{align}
Observe that equation \eqref{eAlmostBasis} and equation \eqref{eniChange} follow if we are able to show that the last six classes above, denoted $c_1, \dots, c_6$, are all in $\mathrm{Sel}_{\mathcal{L}_{s - 2, \boldsymbol{\pi}}}(G_K, E[2])$. The next subsection will be entirely dedicated to this verification.

\subsubsection*{Verifying local conditions}
We will distinguish five cases depending on $v \in T \cup \{\mathfrak{p}_1, \dots, \mathfrak{p}_{s - 2}\}$.

\begin{enumerate}
\item[\textbf{Case 1:}] Suppose that $v \in T - \{w_1, \dots, w_6\} \cup \{\mathfrak{p}_1, \dots, \mathfrak{p}_{s - 17}\}$. Observe that by the first part of $(C2)$ the elements $\lambda_1 p_{s - 16}, \dots, \lambda_6 p_{s - 11}$ are squares locally at $T - \{w_1, \dots, w_6\} \cup \{\mathfrak{p}_1, \dots, \mathfrak{p}_{s - 17}\}$. The same holds for $p_{s - 4}, p_{s - 3}, p_{s - 2}$, in view of $(C5)$. This gives that each $c_i$ is locally trivial at these places. 

\item[\textbf{Case 2:}] Suppose that $v \in \{w_1, \dots, w_6\}$. Using the first part of $(C2)$ and $(C5)$, we see that $c_i$ is trivial at $w_j$ for $i \neq j$ (for the same reason as Case 1). In virtue of the first part of $(C2)$ and of $(C5)$, the class $c_i$ locally at $w_i$ becomes precisely
\begin{alignat*}{3}
&\res_{w_1}(c_1) = \res_{w_1}(\lambda_1, \lambda_1), \quad &&\res_{w_2}(c_2) = \res_{w_2}(\lambda_2, 1), \quad &&\res_{w_3}(c_3) = \res_{w_3}(1, \lambda_3), \\ 
&\res_{w_4}(c_4) = \res_{w_4}(\lambda_4, 1), \quad &&\res_{w_5}(c_5) = \res_{w_5}(\lambda_5, \lambda_5), \quad &&\res_{w_6}(c_6) = \res_{w_6}(1, \lambda_6).
\end{alignat*}
These are local Selmer elements in virtue of our choice of $w_1, \dots, w_6$.

\item[\textbf{Case 3:}] Suppose that $v \in \{(p_{s - 16}), \dots, (p_{s - 11})\}$. Let us first examine the classes $c_i$ locally at $p_{s - 16}$. Right below equation \eqref{eSquaress16}, we have shown that 
$$
\lambda_1, \quad \lambda_3 p_{s -14}, \quad \lambda_4 p_{s -13}, \quad \lambda_5 p_{s - 12}, \quad \lambda_6 p_{s - 11}
$$ 
are squares locally at $p_{s - 16}$. Furthermore, the same holds for $p_{s - 3}$ and $p_{s - 2}$ by $(C6)$. This already forces $c_3, \dots, c_6$ to be locally trivial at $p_{s - 16}$. 

Invoking $(C6)$, we see that $p_{s - 4}$ is a non-square locally at $p_{s - 16}$. Therefore the class $c_1$ locally at $p_{s - 16}$ equals 
$$
(\epsilon_{s - 16}p_{s - 16}, \epsilon_{s - 16}p_{s - 16}).
$$
We have seen in equation \eqref{eLocalSubclaim} that this class is in the local Selmer space at $p_{s - 16}$. By $(C2)$, we have that $\lambda_2p_{s - 15}$ is a non-square at $p_{s - 16}$. Since $p_{s - 4}$ is also a non-square in view of $(C6)$, we see that $\lambda_2p_{s - 15}p_{s - 4}$ is a square locally at $p_{s - 16}$. Therefore $c_2$ is locally trivial at $p_{s - 16}$.

This completes the proof for $v = (p_{s - 16})$. Similar arguments work for the places $v \in \{(p_{s - 15}), \dots, (p_{s - 11})\}$.

\item[\textbf{Case 4:}] Suppose that $v \in \{\mathfrak{p}_{s - 10}, \dots, \mathfrak{p}_{s - 5}\}$. By equation \eqref{eQuadConsistency}, $\lambda_1 p_{s - 16}, \dots, \lambda_6 p_{s - 11}$ are squares locally at $v$. The same holds for $p_{s - 4}, p_{s - 3}, p_{s - 2}$ in view of $(C5)$. Hence each $c_i$ is trivial at these places. 

\item[\textbf{Case 5:}] Suppose that $v \in \{(p_{s - 4}), (p_{s - 3}), (p_{s - 2})\}$. Let us start with $p_{s - 4}$. Since the prime elements $p_{s - 4}, p_{s - 3}, p_{s - 2}$ are trivial at all $2$-adic and infinite places, we deduce from Lemma \ref{lFlip} and $(C7)$ that these three primes are all squares modulo each other. Invoking $(C5)$ and $(C6)$, the fact that $p_{s - 4}$ is totally positive and locally trivial above $2$, and Lemma \ref{lFlip}, we see that $\lambda_3p_{s - 14}, \dots, \lambda_6p_{s - 11}$ are all squares locally at $p_{s - 4}$ too. Hence it remains to check that the classes $c_1$ and $c_2$ satisfy the local conditions at $p_{s - 4}$. Invoking once more $(C5)$ and $(C6)$ along with Lemma \ref{lFlip}, we see that locally at $p_{s - 4}$ the elements $\lambda_1p_{s - 16}$ and $\lambda_2p_{s - 15}$ are both equal to $\epsilon_{s - 4}$. Hence, in view of $(C8)$, it suffices to check that $-1$, $\alpha$, $\beta$ and $\gamma$ are squares locally at $p_{s - 4}$. This follows immediately from $(C5)$ and Hilbert reciprocity.

For $p_{s - 3}$, we argue similarly to deduce that $\lambda_1 p_{s - 16}$, $\lambda_2 p_{s - 15}$, $\lambda_5 p_{s - 12}$ and $\lambda_6 p_{s - 11}$ are all squares locally at $p_{s - 3}$. Hence it remains to check the local conditions for $c_3$ and $c_4$. Note that $\lambda_3p_{s - 14}$ and $\lambda_4p_{s - 13}$ are both non-squares locally at $p_{s - 3}$ in view of $(C5), (C6)$ and Lemma \ref{lFlip}. Hence, in view of $(C8)$, it suffices to verify that $-1$, $\alpha$, $\beta$ and $\gamma$ are squares locally at $p_{s - 3}$, which is again a consequence of Hilbert reciprocity and $(C5)$.

For $p_{s - 2}$, note that $\lambda_1 p_{s - 16}$, $\lambda_2 p_{s - 15}$, $\lambda_3 p_{s - 14}$ and $\lambda_4 p_{s - 13}$ are all squares locally at $p_{s - 2}$. Hence we are left with $c_5$ and $c_6$. Moreover, $\lambda_5 p_{s - 12}$ and $\lambda_6 p_{s - 11}$ are both non-squares locally at $p_{s - 2}$ in view of $(C5), (C6)$ and Lemma \ref{lFlip}. Hence, in view of $(C8)$, it suffices to show that $-1$, $\alpha$, $\beta$ and $\gamma$ are squares locally at $p_{s - 2}$. This follows, once more, from $(C5)$ and Hilbert reciprocity.
\end{enumerate}

\subsubsection*{The final twist}
It will be notationally convenient to denote by $\delta: \{1, \dots, 6\} \to \{2, 3, 4\}$ the function
$$
\delta(1) = \delta(2) = 4, \quad \quad \delta(3) = \delta(4) = 3, \quad \quad \delta(5) = \delta(6) = 2.
$$
Before constructing the prime $\mathfrak{p}_{s - 1}$, we make the auxiliary claim that the elements 
$$
a, \ \ \ \lambda_1 p_{s - 16}p_{s - 4}, \ \ \ \lambda_2 p_{s - 15}p_{s - 4}, \ \ \ \lambda_3 p_{s - 14}p_{s - 3}, \ \ \ \lambda_4 p_{s - 13}p_{s - 3}, \ \ \ \lambda_5 p_{s - 12}p_{s - 2}, \ \ \ \lambda_6 p_{s -11}p_{s - 2}
$$
are linearly independent. Indeed, by exploiting the ramification loci of the prime elements $p_{s - 16}, \dots, p_{s - 11}$, we conclude that the only possible linear dependency would be
$$
1 = \frac{a}{\prod_{j = 1}^6 (\lambda_jp_{s - 17 +j}p_{s - \delta(j)})^{v_{(p_{s - 17 + j})}(a)}} = \frac{v}{\prod_{j=1}^{6}p_{s - \delta(j)}^{v_{(p_{s - 17 + j})}(a)}},
$$
which can be rewritten as
\begin{equation}
\label{evPrimeRel}
\prod_{j = 1}^6 p_{s - \delta(j)}^{v_{(p_{s - 17 + j})}(a)} = v.
\end{equation}
By definition of $\text{proj}$, we know that $v$ is ramified only at the primes in $T \cup \{\mathfrak{p}_1 , \dots , \mathfrak{p}_{s - 17}\}$. Therefore the relation \eqref{evPrimeRel} forces $v = 1$ contradicting \eqref{eNontrivialfirstcoordinate}, ending the proof of our claim.

With the claim established, we now select a prime $\mathfrak{p}_{s - 1}$ disjoint from the places in $S$ and $\{\mathfrak{p}_1, \dots, \mathfrak{p}_{s - 2}\}$ such that both $a$ and $\lambda_6 p_{s - 11}p_{s - 2}$ are non-squares, while 
$$
\lambda_1 p_{s - 16}p_{s - 4}, \quad \lambda_2 p_{s - 15}p_{s - 4}, \quad \lambda_3 p_{s - 14}p_{s - 3}, \quad \lambda_4 p_{s - 13}p_{s - 3}, \quad \lambda_5 p_{s - 12}p_{s - 2}
$$
are squares modulo $\mathfrak{p}_{s - 1}$. We also choose any local uniformizer $\pi_{s - 1}$. We are now visibly in the second case of Lemma \ref{lSelmerChange}. Using  equation \eqref{eAlmostBasis}, we conclude that a basis $\mathcal{B}$ for $\mathrm{Sel}_{\mathcal{L}_{s - 1 , \boldsymbol{\pi}}}(G_K, E[2])$ is
\begin{align}
\mathcal{B} := \{&(\lambda_1 p_{s - 16}p_{s - 4}, \lambda_1 p_{s - 16}p_{s - 4}), (\lambda_2 p_{s - 15}p_{s - 4}, 1), (1, \lambda_3 p_{s - 14}p_{s - 3}), \nonumber \\
&(\lambda_4 p_{s - 13}p_{s - 3}, 1) , (\lambda_5 p_{s - 12}p_{s - 2}, \lambda_5 p_{s - 12}p_{s - 2})\}. \label{eBasisB}
\end{align}
Let $\mathfrak{m}$ be the smallest modulus divisible by $8$ and by all places in $S$. Finally, we choose a prime $\mathfrak{p}_s \not \in S \cup \{\mathfrak{p}_1, \dots, \mathfrak{p}_{s - 1}\}$ such that the class of $\mathfrak{p}_1 \cdots \mathfrak{p}_s$ is trivial in $\mathrm{Cl}(K, \mathfrak{m})$ and such that 
$$
\lambda_1 p_{s - 16}p_{s - 4}, \quad \lambda_2 p_{s - 15}p_{s - 4}, \quad \lambda_3 p_{s - 14}p_{s - 3}, \quad \lambda_4 p_{s - 13}p_{s - 3}, \quad \lambda_5 p_{s - 12}p_{s - 2}
$$
are squares modulo $\mathfrak{p}_s$. We denote by $\kappa$ a generator of $\mathfrak{p}_1 \cdots \mathfrak{p}_s$ that is $1$ modulo $8$, is congruent to $1$ modulo every odd prime in $S$ and is totally positive. We finally choose $\pi_s$ in such a way to enforce that $n_{s - 1} = 0$. Therefore $\mathrm{Sel}_{\mathcal{L}_{s, \boldsymbol{\pi}}}(G_K, E[2])$ still has the basis $\mathcal{B}$.

\subsubsection*{Verification of $(K1)$ and $(K2)$} 
We now verify that $\kappa$ is an auxiliary twist, so we have to check that $\kappa$ satisfies $(K1)$ and $(K2)$ from Definition \ref{def: auxiliary triple}. 

We start by showing that $(K1)$ holds. By construction, $\kappa$ is congruent to $1$ modulo $8$, is totally positive and is congruent to $1$ modulo every odd prime in $S$. It follows that the character $\psi_\kappa$ is locally trivial at $S$. Hence $(K1)$ holds. 

We next deal with $(K2)$. Observe that, by construction, the ramification locus of $\psi_\kappa$ is $\mathfrak{p}_1, \dots, \mathfrak{p}_s$. Furthermore, with the choice of uniformizers $\pi_1, \dots, \pi_s$ made above, we have shown that $\mathrm{Sel}_{\mathcal{L}_{s, \boldsymbol{\pi}}}(G_K, E[2])$ has $\mathcal{B}$ as a basis (see \eqref{eBasisB}). We define 
\begin{alignat*}{2}
&(z_1, z_2) &&:= (\lambda_1 p_{s - 16}p_{s - 4}, \lambda_1 p_{s - 16}p_{s - 4}) \\
&(z_3, z_4) &&:= (\lambda_2 p_{s - 15}p_{s - 4}, 1) \\
&(z_5, z_6) &&:= (1, \lambda_3 p_{s - 14}p_{s - 3}) \\
&(z_7, z_8) &&:= (\lambda_4 p_{s - 13}p_{s - 3}, 1) \\
&(z_9, z_{10}) &&:= (\lambda_5 p_{s - 12}p_{s - 2}, \lambda_5 p_{s - 12}p_{s - 2}). 
\end{alignat*}
With this choice we readily see that equation \eqref{eInfMat1} holds, and this yields $(K2)$.
\end{proof}
\section{Proof of Corollary \ref{cRank1}} 
\label{sct: proof of cor}
In this section we will prove Corollary \ref{cRank1}. 

\begin{proof}[Proof of Corollary \ref{cRank1}]
By Theorem \ref{tMain}, it suffices to show that there exists $(a_1, a_2, a_3) \in O_K^3$ such that the elliptic curve 
$$
y^2 = (x - a_1) (x - a_2) (x - a_3)
$$
is $3$-generic. We take nine totally positive prime elements $p_1, \dots, p_9$ such that $|O_K/p_i| > 5$, $p_i \equiv 1 \bmod 8$ and
\begin{equation}
\label{eLeg1}
\left(\frac{p_j}{p_i}\right) = 1
\end{equation}
for each $1 \leq i < j \leq 9$. Clearly, such primes can be constructed by repeated application of Mitsui's prime ideal theorem. We deduce that
\begin{equation}
\label{eLeg2}
\left(\frac{p_i}{p_j}\right) = 1, \quad \quad \left(\frac{-1}{p_i}\right) = 1
\end{equation}
for each $1 \leq i < j \leq 9$, where the first equality follows from Lemma \ref{lFlip}, while the second follows from Hilbert reciprocity applied to $\psi_{p_i} \cup \psi_{-1}$ (note that the infinite and $2$-adic contributions vanish because the $p_i$ are totally positive and satisfy $p_i \equiv 1 \bmod 8O_K$). Set 
$$
a := p_1p_2p_3, \quad b := p_4p_5p_6, \quad c := p_7p_8p_9.  
$$
Observe that the conic
$$
aX^2 + bY^2 = cZ^2
$$
is, by construction, locally solvable at all places of $K$. Then, by scaling a solution with our nine principal primes if necessary, we see that it has a non-zero point $(X, Y, Z) \in O_K^3$ such that $\gcd(YZ, a) = (1)$, $\gcd(XZ, b) = (1)$ and $\gcd(XY, c) = (1)$. Now we put 
$$
a_1 := 0, \quad \quad a_2 := -aX^2, \quad \quad a_3 := -cZ^2. 
$$
We claim that the elliptic curve
$$
y^2 = (x - a_1) (x - a_2) (x - a_3)
$$
is $3$-generic. Observe that $\alpha = a X^2$, $\beta = c Z^2$ and $\gamma = b Y^2$. The primes $p_1$, $p_2$ and $p_3$ divide $\alpha$ an odd number of times. Moreover, $\pm \beta$ and $\pm \gamma$ are, by equations \eqref{eLeg1} and \eqref{eLeg2}, squares modulo $p_1$, $p_2$ and $p_3$. Arguing similarly for $p_4p_5p_6 \mid \beta$ and $p_7p_8p_9 \mid \gamma$ gives that $E$ is $3$-generic. 
\end{proof}

\end{document}